\definecolor{HW}{rgb}{0,0,0}
\newcommand{\R}{{\mat R}}
\newcommand{\Sp}{{\mathbb S}}
\newcommand{\ds}{\displaystyle}
\newcommand{\no}{\nonumber}
\newcommand{\be}{\begin{eqnarray}}
\newcommand{\ben}{\begin{eqnarray*}}
\newcommand{\en}{\end{eqnarray}}
\newcommand{\enn}{\end{eqnarray*}}
\newcommand{\ba}{\backslash}
\newcommand{\pa}{\partial}
\newcommand{\ov}{\overline}
\newcommand{\I}{{\rm Im}}
\newcommand{\Rt}{{\rm Re}}
\newcommand{\wid}{\widetilde}
\newcommand{\mat}{\mathbb}
\newcommand{\se}{\setminus}
\newcommand{\la}{\lambda}
\newtheorem{remark}[theorem]{Remark}
\newtheorem{algorithm}{Algorithm}[section]
\begin{document}
\renewcommand{\theequation}{\arabic{section}.\arabic{equation}}
\title{\bf An approximate factorization method for inverse acoustic scattering with phaseless {\color{HW}total-field} data
}
\author{Bo Zhang\thanks{LSEC, NCMIS and Academy of Mathematics and Systems Science, Chinese Academy of
Sciences, Beijing 100190, China and School of Mathematical Sciences, University of Chinese
Academy of Sciences, Beijing 100049, China ({\tt b.zhang@amt.ac.cn})}
\and Haiwen Zhang\thanks{Corresponding author. NCMIS and Academy of Mathematics and Systems Science, Chinese Academy of Sciences,
Beijing 100190, China 
and Institute for Numerical and Applied Mathematics,
University of G\"{o}ttingen, Lotzestr. 16-18, 37083 G\"{o}ttingen, Germany
({\tt zhanghaiwen@amss.ac.cn})}
}
\date{}

\maketitle


\begin{abstract}
This paper is concerned with the inverse acoustic scattering problem with phaseless {\color{HW}total-field}
data at a fixed frequency.
An approximate factorization method is developed to numerically reconstruct both the location and
shape of the unknown scatterer from the phaseless {\color{HW}total-field} data generated by incident plane
waves at a fixed frequency and measured on the circle $\pa B_R$ with a sufficiently large radius $R$.
The theoretical analysis of our method is based on the asymptotic property in the operator norm from
$H^{1/2}(\Sp^1)$ to $H^{-1/2}(\Sp^1)$ of the phaseless {\color{HW}total-field} operator defined in terms of
the phaseless {\color{HW}total-field} data measured on $\pa B_R$ with large enough $R$, where $H^s(\Sp^1)$ is a
Sobolev space on the unit circle $\Sp^1$ for real number $s$,
together with the factorization of a modified far-field operator.
The asymptotic property of the phaseless {\color{HW}total-field} operator is also established in this paper
with the theory of oscillatory integrals.
The unknown scatterer can be either an impenetrable obstacle of sound-soft, sound-hard or impedance type
or an inhomogeneous medium with a compact support, and the proposed inversion algorithm does not need to
know the boundary condition of the unknown obstacle in advance.
Numerical examples are also carried out to demonstrate the effectiveness of our inversion method.
To the best of our knowledge, it is the first attempt to develop a factorization type method
for inverse scattering problems with phaseless data.

\begin{keywords}
Inverse acoustic scattering, approximate factorization method, phaseless {\color{HW}total-field} data,
asymptotic behavior of phaseless {\color{HW}total-field} operator
\end{keywords}

\begin{AMS}
35R30, 35Q60, 65R20, 65N21, 78A46
\end{AMS}
\end{abstract}

\pagestyle{myheadings}
\thispagestyle{plain}
\markboth{Bo Zhang and Haiwen Zhang}
{Factorization method for phaseless inverse problem}

\section{Introduction}\label{sec1}
\setcounter{equation}{0}

Inverse scattering with phased data (i.e., data with phase information) has been widely studied
mathematically and numerically over the past decades due to its significant applications in such diverse
scientific areas as radar and sonar detection, remote sensing, geophysics, medical imaging and nondestructive
testing (see, e.g., \cite{CC14,CK11,CK13,K11,KG08} for a comprehensive overview).
However, in many practical applications, it is much harder to obtain data with accurate
phase information compared with only measuring the modulus or intensity of the data
(see, e.g., \cite{Chen18,CLS15,MH17,MDS93} and the references quoted there).
Therefore, it is often desirable to study inverse scattering problems with phaseless data
(i.e., data without phase information).

Many optimization and iteration algorithms have been proposed for solving inverse scattering problems
with phaseless data (see, e.g., \cite{AHN18,BLL13,BZ16,DZG19,DLL19,I07,IK10,IK11,KR97,LZL09,PZCY11,ZZ17,ZZ17b}).
Optimization and iteration algorithms can achieve an accurate reconstruction of the unknown scatterers.
However, this type of algorithms is time-consuming and needs to know the boundary conditions of the
unknown scatterers in advance. To reduce the computational cost, non-iterative algorithms
have recently attracted more and more attention in inverse scattering problems
(see, e.g., \cite{CC14,KG08,P06}).
For inverse scattering problems with phaseless data, non-iterative algorithms have also been studied recently.
In \cite{LL15}, a non-iterative algorithm was proposed to reconstruct a polyhedral sound-soft or sound-hard
obstacle from a few high frequency phaseless backscattering far-field measurements associated with incident
plane waves, where the exterior unit normal vector of each side/face of the obstacle is determined first
with suitably chosen incident directions and the location of the obstacle is then determined with
a few phased far-field data.
This method has been extended to the case of inverse electromagnetic scattering in \cite{LLW17}.
In \cite{CH16,CH2017,CHF2017}, a direct imaging method was proposed to reconstruct scattering obstacles
from acoustic and electromagnetic phaseless {\color{HW}total-field} data, based on the reverse time migration technique.
A direct imaging method was developed in \cite{ZZ18} to recover scattering obstacles from acoustic phaseless
far-field data corresponding to infinitely many sets of superpositions of two plane waves with a fixed frequency
as the incident fields. Recently, two direct sampling algorithms were given to reconstruct acoustic obstacles
in \cite{JLZ19a} and acoustic sources in \cite{JLZ19b} from phaseless far-field data generated with incident
plane waves, by adding a reference point scatterer into the scattering system.
In \cite{JLZ19c}, two direct sampling algorithms were introduced to recover acoustic obstacles from
phaseless far-field measurements corresponding to
superpositions of plane waves and point sources as the incident fields, where the point sources have fixed
source location with at most three different scattering strengths.
In \cite{ZGLL18}, a non-iterative algorithm was proposed to recover acoustic sources from multi-frequency
phaseless near-field data, where the phase information of the measured data is first recovered with
the reference point source technique (that is, adding certain reference point sources into the scattering system)
and the acoustic sources are then reconstructed with the Fourier method.
{\color{HW}
Recently, numerical algorithms have been proposed in \cite{KKNNBA18,KNP16,KNN19} to recover the
refractive indices of unknown scatterers from multi-frequency phaseless scattered-field or total-field data.
Inversion algorithms have also been developed in \cite{MNP16,MNPT17,NMP15} for array imaging problems
with phaseless data.}
On the other hand, uniqueness and stability results have also been obtained for inverse scattering with
phaseless data (see, e.g., \cite{Kli14,Kli17,KR16,MH17,N15,RY18} for the case of near-field measurements
and \cite{ACZ16,JLZ19a,JLZ19b,JLZ19c,LZ10,M76,N16,XZZ18,XZZ18b,ZG18} for the case of far-field measurements).

In this paper, we consider inverse acoustic scattering with phaseless {\color{HW}total-field} data associated with
incident plane waves. For simplicity, we restrict our attention to the 2D case.
Precisely, our inverse problem is to reconstruct a unknown scatterer from the phaseless
{\color{HW}total-field} data $|u(x,d)|$ for $x\in\pa B_R$ and $d\in\Sp^1$,
where $\pa B_R$ is a circle of radius $R$ and centered at the origin enclosing the unknown obstacle,
$\Sp^1$ is the unit circle, $d\in\Sp^1$ is the direction of incident plane wave
and $u$ is the total field which is the sum of the incident and scattered fields.
Our purpose here is to develop a numerical algorithm based on the factorization method to solve this
inverse problem.
The classical factorization method was first proposed by Kirsch in \cite{K98}, where a necessary and
sufficient criterion was established to characterize both the location and shape of the obstacle
by using the spectral system of the far-field operator defined by the far-field pattern associated
with the incident plane waves. Moreover, this method can be implemented as a non-iterative
algorithm for the inverse scattering problem which is very fast in the computations and does not need
to know the boundary conditions of the unknown obstacles in advance.
Therefore, the factorization method has been widely applied to many kinds of inverse scattering
problems so far. In particular, the factorization method was extended to the case of near-field
measurements in \cite{HYZZ14} to reconstruct the unknown obstacle with the aid of the spectral system
of the near-field operator defined by the near-field data measured over a circle or sphere,
associated with the incident point sources.
We refer to the monograph \cite{KG08} and the references therein for a comprehensive overview
of the factorization method.

Note that, in order to establish the necessary and sufficient criterion on the characterization of
the unknown obstacles, the key step of the classical factorization method is to prove that
the constructed far-field operator or near-field operator defined by the measured data satisfies
the Range Identity (see \cite[Theorem 2.15]{KG08} for the original version and \cite[Theorem 3.2]{BH13}
and \cite[Theorem 1.1]{KL14} for the modified version).
However, for the inverse problem under consideration, it is difficult to find a suitable operator
defined by the measured phaseless {\color{HW}total-field} data which satisfies the Range Identity.
Thus, in this paper, we propose a modified factorization method, which is called the approximate
factorization method, to numerically reconstruct the unknown scatterer from the phaseless
{\color{HW}total-field} data. In doing so, an essential role is played by the asymptotic property in the linear space
$\mathcal{L}(H^{1/2}(\Sp^1), H^{-1/2}(\Sp^1))$ of the phaseless {\color{HW}total-field} operator defined in terms
of the phaseless {\color{HW}total-field} data measured on the circle $\pa B_R$ with large enough $R$.
This asymptotic property is established, in this paper, by making use of the asymptotic properties of
the scattered field and results from the theory of oscillatory integrals (see Theorem \ref{thm1}).
In particular, utilizing this asymptotic result and constructing a modified phaseless {\color{HW}total-field} operator
$\wid{N}^{PW}_{R}$ defined by the measured data $|u(x,d)|$ with $x\in\pa B_R,\;d\in\Sp^1$
and a modified far-field operator $\wid{F}$ defined by the far-field pattern $u^\infty(\hat{x},d)$
of the scattered field with $\hat{x},d\in\Sp^1$ (see the formulas (\ref{eq37}) and (\ref{eq38}) below),
we can prove that the operators $\ds(\wid{N}^{PW}_{R})_{\#}:=|\Rt(\wid{N}^{PW}_{R})|+|\I(\wid{N}^{PW}_{R})|$
and $\wid{F}_{\#}:=|\Rt(\wid{F})|+|\I(\wid{F})|$ satisfy the asymptotic property
\ben
\|(\wid{N}^{PW}_{R})_{\#}-(1/\sqrt{8 k\pi R})\wid{F}_\#\|_{L^2(\Sp^1)\rightarrow L^2(\Sp^1)}
=O(1/R^{\alpha})
\enn
for any fixed $\alpha\in (1/2,1)$ as the radius $R\rightarrow+\infty$ (see Remark \ref{rem2}).
This means that the leading order term of the operator $\ds(\wid{N}^{PW}_{R})_{\#}$
in the linear space $\mathcal{L}(L^2(\Sp^1), L^2(\Sp^1))$ is
$\ds{(1/\sqrt{8 k\pi R})}\wid{F}_\#$ as $R\rightarrow+\infty$.
Note that ${(1/\sqrt{8 k\pi R})}$ is a constant for fixed $R$.
On the other hand, we can prove that the operator $\wid{F}_{\#}$ has a factorization satisfying
the Range Identity in \cite[Theorem 1.1]{KL14} and thus the unknown obstacle can be recovered from
the spectral system of $\wid{F}_{\#}$ (see Theorem \ref{thm3} below).
Thus, it is expected that
the unknown obstacle can be approximately recovered from the spectral system of
$(\wid{N}^{PW}_{R})_{\#}$ if $R$ is sufficiently large.
Based on this, a numerical algorithm is proposed to reconstruct both the location and shape of the
unknown scatterer from the phaseless {\color{HW}total-field} data.
Numerical examples are also carried out to demonstrate the effectiveness of the inversion algorithm.
It should be remarked that an approximate factorization or asymptotic factorization method has also
been studied for inverse scattering problems with phased data (see \cite{AGH07,G08,GH09,QYZ17,QZ19}).
To the best of our knowledge, the present paper is the first attempt to employ the idea of the
factorization method in inverse scattering problems with phaseless data.

The rest part of this paper is organised as follows. In Section \ref{sec1+}, we present the forward and
inverse scattering problems considered. In Section \ref{sec2}, we study the asymptotic property in
the linear space $\mathcal{L}(H^{1/2}(\Sp^1), H^{-1/2}(\Sp^1))$ of the phaseless {\color{HW}total-field} operator
defined in terms of the phaseless {\color{HW}total-field} data measured on the circle $\pa B_R$ with large enough $R$
(see Theorem \ref{thm1} below), which plays an essential role in the theoretical analysis of the
approximate factorization method given in Section \ref{sec3} for the inverse problem under consideration.
The numerical implementation of our method is presented in Section \ref{sec4}.
Numerical experiments are carried out in Section \ref{sec5} to illustrate the effectiveness of the
inversion algorithm. Some conclusions are given in Section \ref{sec6}.

\section{The forward and inverse scattering problems}\label{sec1+}
\setcounter{equation}{0}

We now present the forward and inverse scattering problems considered in this paper.
For simplicity, we restrict our attention to the 2D case.
However, our analysis can be easily extended to the 3D case.
Let the obstacle $D$ be an open and bounded domain in $\R^2$ with $C^2$-boundary $\pa D$ such that
the exterior $\R^2\ba\ov{D}$ of $\ov{D}$ is connected. Given the incident field $u^i$, the total field
$u=u^i+u^s$ is the sum of the incident field $u^i$ and the scattered field $u^s$.
If $D$ is an impenetrable obstacle, then the scattering problem by the obstacle $D$ is modeled as follows:
\begin{align}\label{eq1}
\qquad\qquad\qquad\qquad
\Delta u^s+k^2 u^s=&\;0&&\quad{\rm in}\;\;\R^2\se\ov{D},
\qquad\qquad\qquad\\ \label{eq2}
{\mathscr B}u^s=&\;f&&\quad{\rm on}\;\;\pa D,\\ \label{eq3}
\lim\limits_{r\rightarrow\infty}r^{\frac{1}{2}}\left(\frac{\pa u^s}{\pa r}-iku^s\right)=&\;0,&&\quad r=|x|,
\end{align}
where $u^s:=u-u^i$ is the scattered field, the boundary value $f:=-{\mathscr B}u^i$, $k=\omega/c>0$ is
the wave number, and $\omega$ and $c$ are the wave frequency and speed in the homogeneous medium in
$\R^2\ba\ov{D}$.
Here, the equation (\ref{eq1}) is called the Helmholtz equation, and the condition (\ref{eq3}) is the
well-known Sommerfeld radiation condition, which ensures that the scattered field is outgoing
(see, e.g., \cite{CK13}). Further, (\ref{eq2}) is the boundary condition imposed on $\pa D$, which depends
on the physical property of the obstacle $D$:
\be\label{eq22}
\left\{\begin{array}{ll}
\mathscr{B}u^{s}=u^{s} & \mbox{for a sound-soft obstacle $D$},\\
\ds\mathscr{B}u^{s}=\pa u^{s}/\pa\nu+\rho u^{s} & \mbox{for an impedance obstacle $D$},
\end{array}\right.
\en
where $\nu$ is the outward unit normal vector on the boundary $\pa D$ and
$\rho\in L^\infty(\pa D)$ is the impedance function on the boundary $\pa D$ which is complex-valued
with $\I(\rho)\geq 0$ almost everywhere on $\pa D$. In particular, when $\rho=0$, the impedance boundary
condition is reduced to the Neumann boundary condition, which corresponds to the sound-hard obstacle.

If $D$ is filled with an inhomogeneous medium characterized by the refractive index $n$,
then the scattering problem is modeled by the medium scattering problem
\begin{align}\label{eq4}
\qquad\qquad\qquad\qquad
\Delta u^s+k^2n u^s=&\;g&&\quad\mbox{in}\;\;\R^2,
\qquad\qquad\qquad\\ \label{eq5}
\lim\limits_{r\rightarrow\infty}r^{\frac{1}{2}}
\left(\frac{\pa u^s}{\pa r}-iku^s\right)=&\;0,&&\quad r=|x|,
\end{align}
where $g:=-k^2(n-1)u^i$.
In this paper, we assume that the contrast function $m:=n-1$ is supported in $\ov{D}$
and $n\in L^\infty(\R^2)$ with $\Rt[n(x)]\geq c_0>0$ for a constant $c_0$ and $\I[n(x)]\geq 0$
for almost all $x\in D$.

By the variational method \cite{CC14} or the integral equation method \cite{CK83,CK13},
it can be shown that the obstacle scattering problem (\ref{eq1})-(\ref{eq3}) and
the medium scattering problem (\ref{eq4})-(\ref{eq5}) have a unique solution.
In particular, it is well-known that the scattered field $u^s$ has the asymptotic behavior
\be\label{eq36}
u^s(x)=\frac{e^{i\pi/4}}{\sqrt{8 k\pi}}\frac{e^{ik|x|}}{\sqrt{|x|}}\left\{u^\infty(\hat{x})+O\left(\frac{1}{|x|}\right)\right\},
\quad|x|\rightarrow\infty
\en
uniformly for all observation directions $\hat{x}:=x/|x|\in\Sp^1$ with $\Sp^1$ denoting the unit
circle in $\R^2$ (see \cite{KG08}). Here, $u^\infty(\hat{x})$ is called the far-field pattern of
the scattered field $u^s(x)$, which is an analytic function of $\hat{x}\in\Sp^1$ \cite{CK13}.
In this paper, we consider the incident plane wave $u^i=u^i(x,d):=e^{i k x\cdot d}$,
where $d\in\Sp^1$ is the incident direction. Accordingly, the total field, the scattered field and
the far-field pattern are denoted by $u(x,d)$, $u^s(x,d)$ and $u^\infty(\hat{x},d)$, respectively.

To give a precise description of the inverse problem considered in this paper, let $B_{R}$
be the circle of radius $R$ and centered at $(0,0)$. Throughout this paper, we assume that $R$ is
large enough so that $\ov{D}\subset B_{R}$.
Then we shall consider the following inverse scattering problem with the phaseless {\color{HW}total-field} data
at a fixed frequency.

{\bf Inverse Problem (IP)}: Reconstruct the unknown scatterer $D$ from the measured phaseless {\color{HW}total-field}
data $|u(x,d)|$ for all $x\in \pa B_{R}$ and $d\in\Sp^1$, where $u(x,d)$ is the total field of the
scattering problem by the impenetrable obstacle or the inhomogeneous medium,
associated with the incident plane wave $u^i(x,d)$ at a fixed frequency.

Our purpose is to develop an approximate factorization method for solving the inverse problem (IP)
with the radius $R$ large enough, based on the asymptotic property in the linear space
$\mathcal{L}(H^{1/2}(\Sp^1), H^{-1/2}(\Sp^1))$
of the phaseless {\color{HW}total-field} operator defined in terms of the phaseless {\color{HW}total-field} data measured on
the circle $\pa B_R$ with large enough $R$,
together with the factorization of a modified far-field operator.
To this end, we introduce the following notations which will be used in the rest of the paper.
For any $x,d\in\Sp^1$, let $\hat{x}=(\cos\theta_{\hat{x}},\sin\theta_{\hat{x}})$,
$d=(\cos\theta_{d},\sin\theta_{d})$ with $\theta_{\hat{x}},\theta_d\in[0,2\pi]$.
Denote by $(\cdot,\cdot)_{L^2(\Sp^1)}$ the inner product in the Hilbert space $L^2(\Sp^1)$
and by $\langle\cdot,\cdot\rangle$ the duality pairing between $H^{-1/2}(\Sp^1)$ and $H^{1/2}(\Sp^1)$
extending the inner product $(\cdot,\cdot)_{L^2(\Sp^1)}$ (see, e.g., \cite{KG08}).
Throughout this paper, the constants may be different at different places.
We remark that, to the best of our knowledge, no uniqueness result is available yet
for our inverse problem.

\section{The asymptotic property of the phaseless {\color{HW}total-field} operator}\label{sec2}
\setcounter{equation}{0}

In this section, we study the asymptotic property in the linear space
$\mathcal{L}(H^{1/2}(\Sp^1), H^{-1/2}(\Sp^1))$ of the phaseless {\color{HW}total-field} operator defined in terms of
the phaseless {\color{HW}total-field} data measured on the circle $\pa B_R$ with large enough $R$,
which plays an essential role in the development of the approximate factorization method
for the inverse problem (IP). Precisely, we introduce the phaseless {\color{HW}total-field} operator
$N^{PW}_{R}: H^r(\Sp^1)\rightarrow H^s(\Sp^1)$, $r,s\in\R$, by
\be\label{eq9}
\left(N^{PW}_{R}\varphi\right)(\hat{x}):=\int_{\Sp^1}\left(|u(R\hat{x},d)|^2-1\right)
e^{ikR\hat{x}\cdot d}\varphi(d)ds(d),\qquad\;\varphi\in H^r(\Sp^1),
\en
where $\hat{x}=x/R$, $x\in\pa B_{R}$, $d\in\Sp^1$, $u(x,d)=u^i(x,d)+u^s(x,d)$ and $u^s(x,d)$ are the total
and scattered fields of the scattering problem by the impenetrable obstacle or the inhomogeneous medium,
associated with incident plane waves $u^i(x,d)$.
We also introduce the far-field operator $F: H^r(\Sp^1)\rightarrow H^s(\Sp^1)$, $r,s\in\R$, by
\be\label{eq34}
\left(F\varphi\right)(\hat{x}):=\int_{\Sp^1}u^\infty(\hat{x},d)\varphi(d)ds(d),
\qquad\;\varphi\in H^r(\Sp^1),
\en
where $u^\infty(\hat{x},d)$ for $\hat{x},d\in\Sp^1$ is the far-field pattern of the scattered field $u^s(x,d)$.
It can be seen that $N^{PW}_{R}$ and $F$ are well defined since $|u(x,d)|$ is analytic in
$x\in\pa B_{R}$ and in $d\in\Sp^1$, respectively, and $u^\infty(\hat{x},d)$ is analytic in
$\hat{x}\in\Sp^1$ and in $d\in\Sp^1$, respectively (see, e.g., \cite{CK13}).
In what follows, we will study the relationship between $N^{PW}_{R}$ and $F$ when $R$ is sufficiently large.

We need the following result on the property of the scattered field $u^s$.

\begin{lemma}\label{lem1}
For any $x\in \R^2\ba\ov{D}$ with $|x|$ large enough and $d\in\Sp^1$,
the scattered field $u^s(x,d)$ has the asymptotic behavior
\be\label{eq6}
u^s(x,d)=\frac{e^{i\pi/4}}{\sqrt{8 k\pi}}\frac{e^{ik|x|}}{|x|^{1/2}}u^\infty(\hat{x},d)+u^s_{Res}(x,d)
\en
with
\be\label{eq7}
\|u^\infty(\cdot,d)\|_{C^1({\Sp^1})}&\le& C,\\
\label{eq8}
|u^s_{Res}(x,d)|&\le& \frac{C}{|x|^{3/2}},
\en
where $C>0$ is a constant independent of $x$ and $d$.
\end{lemma}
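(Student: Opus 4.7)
The plan is to obtain \eqref{eq6}--\eqref{eq8} from an integral representation of $u^s(\cdot,d)$ over a bounded set together with a uniform asymptotic expansion of the fundamental solution $\Phi(x,y)=(i/4)H_0^{(1)}(k|x-y|)$. I will treat the obstacle and medium cases in parallel because the only difference is the support of the source: for an impenetrable $D$ a combined single- and double-layer representation gives
\[
u^s(x,d)=\int_{\pa D}\!\bigl[\Phi(x,y)\phi_d(y)+\pa_{\nu(y)}\Phi(x,y)\psi_d(y)\bigr]ds(y),\qquad x\in\R^2\se\ov D,
\]
while for the medium case the Lippmann--Schwinger equation yields
\[
u^s(x,d)=-k^2\int_{D}\Phi(x,y)\,[n(y)-1]\,u(y,d)\,dy.
\]
In either case the integrand is supported on a fixed bounded set independent of $d$, and standard well-posedness of the forward problem (combined with $\|u^i(\cdot,d)\|_{C^k(\ov D)}\le C$ uniformly in $d$, which follows from $|u^i|=1$ and $|\na^\alpha u^i|\le k^{|\alpha|}$) produces a density or total field bounded in the appropriate norm uniformly in $d\in\Sp^1$.

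The next step is to insert into this representation the classical uniform expansion of the Hankel function, which gives
\[
\Phi(x,y)=\frac{e^{i\pi/4}}{\sqrt{8k\pi}}\frac{e^{ik|x|}}{|x|^{1/2}}\,e^{-ik\hat x\cdot y}+r_0(x,y),
\qquad
\pa_{\nu(y)}\Phi(x,y)=\frac{e^{i\pi/4}}{\sqrt{8k\pi}}\frac{e^{ik|x|}}{|x|^{1/2}}(-ik\hat x\cdot\nu(y))\,e^{-ik\hat x\cdot y}+r_1(x,y),
\]
with $|r_j(x,y)|\le C|x|^{-3/2}$ uniformly for $y$ in a bounded set and $\hat x\in\Sp^1$. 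Substituting these expansions identifies the far-field pattern $u^\infty(\hat x,d)$ (as an integral over $\pa D$ or $D$ of the leading symbol against the density/total field) and shows that the residual $u^s_{Res}(x,d)$ is the integral of $r_j$ against bounded densities, hence satisfies \eqref{eq8} uniformly in $d$.

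For \eqref{eq7} I differentiate the integral formula for $u^\infty(\hat x,d)$ once with respect to $\hat x$; each derivative brings down only a bounded factor involving $y$ (since $y$ ranges over the bounded set $\ov D$), so both $u^\infty(\hat x,d)$ and $\na_{\hat x}u^\infty(\hat x,d)$ are controlled by the $L^2$- or $L^\infty$-norm of the density/total field, which is uniformly bounded in $d$ by the preceding step. The principal obstacle I foresee is \emph{not} any single estimate but making the uniformity in $d$ genuinely uniform across the two geometric settings of \textbf{(IP)}; this requires quoting, once for each case, the continuous dependence of the forward solution on the boundary/volume data together with the observation that $d\mapsto u^i(\cdot,d)$ is uniformly bounded on $\ov D$ in the relevant norm. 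Once this is in place, \eqref{eq6}--\eqref{eq8} follow by combining the representation formula with the uniform remainder bounds for $\Phi$ and $\pa_\nu\Phi$.
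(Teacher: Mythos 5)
Your proposal is correct and follows essentially the same route as the paper: the paper's proof is a two-line citation of the well-posedness of the two forward problems together with the classical asymptotics \eqref{eq36} from Colton--Kress, and your argument (integral representation over a fixed bounded set, uniform Hankel-function expansion of $\Phi$ and $\pa_\nu\Phi$, and uniformity in $d$ via the bounded solution operator applied to the uniformly bounded data $-\mathscr{B}u^i(\cdot,d)$ or $-k^2(n-1)u^i(\cdot,d)$) is precisely the standard derivation being invoked there. You simply supply the details the authors omit.
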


\begin{proof}
This lemma can be easily obtained by using the well-posedness of the obstacle scattering problem
(\ref{eq1})-(\ref{eq3}) and the medium scattering problem (\ref{eq4})-(\ref{eq5}), together with
the asymptotic behavior (\ref{eq36}) of the scattered field $u^s$ (see, e.g., \cite{CK13}).
\end{proof}

{\color{HW}From (\ref{eq6}) it follows that
\ben
\left(|u(x,d)|^2-1\right)e^{ikx\cdot d}&=&\left(|e^{ikx\cdot d}+u^s(x,d)|^2-1\right)e^{ikx\cdot d}\\
&=& u^s(x,d)+\ov{u^s(x,d)}e^{2i k x\cdot d}+|u^s(x,d)|^2 e^{ikx\cdot d}\\
&=& \frac{e^{i\pi/4}}{\sqrt{8k\pi}}\frac{e^{ik|x|}}{|x|^{1/2}}u^\infty(\hat{x},d)
+\frac{e^{-i\pi/4}}{\sqrt{8k\pi}}\frac{e^{-ik|x|}}{|x|^{1/2}}\ov{u^\infty(\hat{x},d)}e^{2ikx\cdot d}\\
&&+ u^s_{Res}(x,d)+\ov{u^s_{Res}(x,d)}e^{2ikx\cdot d}+|u^s(x,d)|^2 e^{ikx\cdot d}.
\enn
Inserting this into (\ref{eq9}) gives}
\be\label{eq14}
\left(N^{PW}_{R}\varphi\right)(\hat{x})&=&\frac{e^{i\pi/4}}{\sqrt{8k\pi}}\frac{e^{ikR}}{R^{1/2}}
\left(F\varphi\right)(\hat{x})+\left(H^{(1)}_{PW,R}\varphi\right)(\hat{x})
+\left(H^{(2)}_{PW,R}\varphi\right)(\hat{x}),
\en
where
\be\label{eq10}
\left(H^{(1)}_{PW,R}\varphi\right)(\hat{x}):=\frac{e^{-i\pi/4}}{\sqrt{8k\pi}}\frac{e^{-ikR}}{R^{1/2}}
\left(L_{PW,R}\varphi\right)(\hat{x})
\en
with
\be\label{eq15}
\left(L_{PW,R}\varphi\right)(\hat{x}):=
\int_{\Sp^1}\ov{u^\infty(\hat{x},d)}e^{2ikR\hat{x}\cdot d}\varphi(d)ds(d)
\en
and
\be\label{eq16}
&&\left(H^{(2)}_{PW,R}\varphi\right)(\hat{x})\nonumber\\
&&:=\int_{\Sp^1}\left[u^s_{Res}(R\hat{x},d)+\ov{u^s_{Res}(R\hat{x},d)}e^{2ikR\hat{x}\cdot d}
+|u^s(R\hat{x},d)|^2 e^{ik R\hat{x}\cdot d}\right]\varphi(d)ds(d).\;\;
\en

To proceed further we need the following result for oscillatory integrals (see \cite{CH2017}).

\begin{lemma}[Lemma 3.9 in \cite{CH2017}]\label{lem2}
For any $-\infty<a<b<\infty$ let $u\in C^2[a,b]$ be real-valued and satisfy that
$|u'(t)|\geq1$ for all $t\in(a,b)$. Assume that $a=x_0<x_1<\cdots<x_N=b$ is a division of $(a,b)$
such that $u'$ is monotone in each interval $(x_{i-1},x_i)$, $i=1,\ldots,N$.
Then for any function $\phi$ defined on $(a,b)$ with integrable derivative and for any $\la>0$,
\ben
\left|\int_a^b e^{i\la u(t)}\phi(t)dt\right|\leq(2N+2)\la^{-1}\left[|\phi(b)|+\int^b_a|\phi'(t)|dt\right].
\enn
\end{lemma}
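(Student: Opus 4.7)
The plan is to run a van der Corput style integration-by-parts argument on each of the $N$ monotone pieces of $(a,b)$ and then sum. The hypothesis $|u'(t)|\ge 1$ ensures $1/u'$ is well-defined with $|1/u'|\le 1$ on all of $(a,b)$, while $u\in C^2$ gives that $u'$ is continuous and has constant sign on each subinterval. These are exactly the ingredients needed for a single integration by parts to produce a $\la^{-1}$ gain.

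First I would localize
\ben
\int_a^b e^{i\la u(t)}\phi(t)\,dt = \sum_{i=1}^N \int_{x_{i-1}}^{x_i} e^{i\la u(t)}\phi(t)\,dt,
\enn
and, on each piece, rewrite $e^{i\la u(t)} = (i\la u'(t))^{-1}\frac{d}{dt}e^{i\la u(t)}$ and integrate by parts. This produces a boundary term $[\phi(t)e^{i\la u(t)}/(i\la u'(t))]_{x_{i-1}}^{x_i}$ plus an interior integral of $e^{i\la u}$ tested against $(\phi/(i\la u'))' = \phi'/(i\la u') - \phi\, u''/(i\la (u')^2)$.

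Next I would sum over $i$ and estimate the three groups of terms separately. Because $u\in C^2[a,b]$ and $\phi$ is continuous, the boundary contributions telescope to the endpoint values at $t=a$ and $t=b$, which are bounded by $(2/\la)\|\phi\|_{L^\infty(a,b)}$. The $\phi'$ integrals combine, using $|u'|\ge 1$, into at most $\la^{-1}\int_a^b|\phi'(t)|\,dt$. The decisive piece is the $u''$ term: here I would exploit the monotonicity of $u'$ on each subinterval (which forces $1/u'$ to be monotone and $u''$ to have constant sign there) to write
\ben
\int_{x_{i-1}}^{x_i}\left|\frac{u''(t)}{u'(t)^2}\right|dt
= \int_{x_{i-1}}^{x_i}\left|\frac{d}{dt}\frac{1}{u'(t)}\right|dt
= \left|\frac{1}{u'(x_i)} - \frac{1}{u'(x_{i-1})}\right| \le 2,
\enn
so each subinterval contributes at most $(2/\la)\|\phi\|_{L^\infty(a,b)}$, and the $N$ pieces together at most $(2N/\la)\|\phi\|_{L^\infty(a,b)}$.

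Finally I would invoke the fundamental theorem of calculus in the form $\|\phi\|_{L^\infty(a,b)}\le |\phi(b)| + \int_a^b|\phi'(t)|\,dt$ and collect constants to recover $(2N+2)\la^{-1}[|\phi(b)|+\int_a^b|\phi'(t)|\,dt]$. The main obstacle is the $u''$ bound above: this is precisely the step that needs the partition into subintervals on which $u'$ is monotone, since otherwise the total variation of $1/u'$ cannot be controlled and the $\la^{-1}$ gain from integration by parts is destroyed. Everything else is routine estimation using the uniform bound $|1/u'|\le 1$.
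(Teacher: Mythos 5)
The paper itself does not prove this lemma: it is quoted verbatim from \cite{CH2017} (Lemma 3.9 there), whose proof is the standard van der Corput first-derivative estimate with amplitude. Your argument is the right kind of argument, and each individual step is sound: $u'$ is continuous and nonvanishing on $(a,b)$, hence of one sign, so $|1/u'|\le 1$ throughout; the boundary terms of the piecewise integration by parts telescope because $u\in C^2$ and $\phi$ is absolutely continuous; and the monotonicity of $u'$ on each $(x_{i-1},x_i)$ is exactly what converts $\int|u''/(u')^2|$ into the increment of $1/u'$, which is indeed the crux.

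The one place you fall short of the statement as written is the final constant. Your three contributions sum to $\lambda^{-1}\bigl[(2N+2)\|\phi\|_{L^\infty}+\int_a^b|\phi'|\bigr]$, and substituting $\|\phi\|_{L^\infty}\le|\phi(b)|+\int_a^b|\phi'|$ gives $(2N+2)|\phi(b)|+(2N+3)\int_a^b|\phi'|$, not $(2N+2)\bigl[|\phi(b)|+\int_a^b|\phi'|\bigr]$: the separate $\phi'$ term is not absorbed. Two easy repairs. (i) Sharpen your per-interval bound from $2$ to $1$: since $u'$ has one sign on all of $(a,b)$, $|1/u'(x_i)-1/u'(x_{i-1})|\le\max\{|1/u'(x_i)|,|1/u'(x_{i-1})|\}\le 1$, which turns your total into $\lambda^{-1}\bigl[(N+2)\|\phi\|_{L^\infty}+\int_a^b|\phi'|\bigr]$ and yields the claimed constant for every $N\ge1$. (ii) Alternatively, follow the usual two-step route: first bound the primitive $F(t)=\int_a^te^{i\lambda u(s)}ds$ uniformly by $(2N+2)\lambda^{-1}$ using the phase-only version of your computation (telescoped boundary gives $2\lambda^{-1}$, each monotone piece gives at most $2\lambda^{-1}$), then write $\int_a^be^{i\lambda u}\phi\,dt=F(b)\phi(b)-\int_a^bF\phi'\,dt$; this lands exactly on $(2N+2)\lambda^{-1}\bigl[|\phi(b)|+\int_a^b|\phi'|\bigr]$ and is almost certainly the proof in the cited reference. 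For the application in this paper the exact constant is immaterial, since it is absorbed into a generic $C$, but as a proof of the lemma as stated your bookkeeping needs one of these fixes.
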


We can now study the property of $H^{(i)}_{PW,R}$, $i=1,2$, for $R$ large enough.

\begin{lemma}\label{lem3}
For any $\varphi\in H^1(\Sp^1)$ and for $R>0$ large enough we have
\ben
\|H^{(1)}_{PW,R}\varphi\|_{L^2(\Sp^1)}\le\frac{C}{R}\|\varphi\|_{H^1(\Sp^1)},
\enn
where $C>0$ is a constant independent of $R$.
\end{lemma}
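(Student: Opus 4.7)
The plan is to reduce the statement to a uniform pointwise bound on $L_{PW,R}\varphi$ and then extract that bound from Lemma \ref{lem2} after partitioning $\Sp^1$ around the two stationary points of the phase $\hat x\cdot d$. Since (\ref{eq10}) carries a prefactor of $R^{-1/2}/\sqrt{8k\pi}$ in front of $L_{PW,R}$, it suffices to establish
\[
|(L_{PW,R}\varphi)(\hat x)|\le C R^{-1/2}\|\varphi\|_{H^1(\Sp^1)},\qquad\hat x\in\Sp^1,
\]
with $C$ independent of $\hat x$; taking the $L^2(\Sp^1)$-norm in $\hat x$ and combining with the $R^{-1/2}$ prefactor then yields the claimed $O(R^{-1})$ operator bound.

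For fixed $\hat x$, parametrise $d(\theta)=(\cos\theta,\sin\theta)$, so that $\hat x\cdot d(\theta)=\cos(\theta_{\hat x}-\theta)$ and
\[
(L_{PW,R}\varphi)(\hat x)=\int_0^{2\pi}\Phi_{\hat x}(\theta)\,e^{2ikR\cos(\theta_{\hat x}-\theta)}\,d\theta,\qquad \Phi_{\hat x}(\theta):=\ov{u^\infty(\hat x,d(\theta))}\,\varphi(d(\theta)).
\]
The phase has $\theta$-derivative $\sin(\theta_{\hat x}-\theta)$, which vanishes only at $\theta=\theta_{\hat x}$ and $\theta=\theta_{\hat x}+\pi$. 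For a parameter $\delta\in(0,\pi/4)$ to be optimised, I split $[0,2\pi)$ into the $\delta$-neighbourhoods $J_\delta$ of these two points and the complementary non-stationary region $I_\delta$, on which $|\sin(\theta_{\hat x}-\theta)|\ge(2/\pi)\delta$.

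On $J_\delta$ I use the trivial bound, which gives a contribution $\le 4\delta\,\|\Phi_{\hat x}\|_{L^\infty(0,2\pi)}$. On each connected component of $I_\delta$ the derivative $\sin(\theta_{\hat x}-\theta)$ has a uniformly bounded number of monotone subintervals (the zeros of its derivative $-\cos(\theta_{\hat x}-\theta)$ split the circle into four pieces, independent of $\hat x$), so after renormalising the phase by the lower bound $(2/\pi)\delta$ and applying Lemma \ref{lem2} with effective large parameter $\lambda=(4kR/\pi)\delta$, the $I_\delta$-contribution is dominated by $C(R\delta)^{-1}\bigl(\|\Phi_{\hat x}\|_{L^\infty(0,2\pi)}+\|\Phi_{\hat x}'\|_{L^1(0,2\pi)}\bigr)$. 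Summing the two contributions and choosing $\delta=R^{-1/2}$ balances them and produces
\[
|(L_{PW,R}\varphi)(\hat x)|\le CR^{-1/2}\bigl(\|\Phi_{\hat x}\|_{L^\infty(0,2\pi)}+\|\Phi_{\hat x}'\|_{L^1(0,2\pi)}\bigr).
\]

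To close the estimate I control $\|\Phi_{\hat x}\|_{L^\infty}$ and $\|\Phi_{\hat x}'\|_{L^1}$ by $\|\varphi\|_{H^1(\Sp^1)}$. The analyticity of the far-field pattern in both variables (a counterpart of Lemma \ref{lem1} in the $d$-variable, obtainable through reciprocity of the direct problem) supplies $\|u^\infty(\hat x,\cdot)\|_{C^1(\Sp^1)}\le C$ uniformly in $\hat x$, so $\|\Phi_{\hat x}\|_{L^\infty}\le C\|\varphi\|_{L^\infty(\Sp^1)}$ and $\|\Phi_{\hat x}'\|_{L^1}\le C(\|\varphi\|_{L^1(\Sp^1)}+\|\varphi'\|_{L^1(\Sp^1)})$. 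Combining with the Sobolev embedding $H^1(\Sp^1)\hookrightarrow C^0(\Sp^1)$ and the Cauchy--Schwarz bound $\|\varphi'\|_{L^1}\le(2\pi)^{1/2}\|\varphi\|_{H^1(\Sp^1)}$ closes the pointwise bound and hence proves the lemma. The most delicate point I expect is the partition-of-phase step, namely ensuring that the number of monotone pieces of $\sin(\theta_{\hat x}-\theta)$ on $I_\delta$, and hence the constant from Lemma \ref{lem2}, is independent of $\hat x$; this uniformity is exactly what promotes the pointwise estimate into an operator bound of order $R^{-1}$.
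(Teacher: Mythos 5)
Your proposal is correct and follows essentially the same route as the paper: reduce to an $O(R^{-1/2})$ bound on $L_{PW,R}$, excise $\delta$-neighbourhoods of the two stationary points of the phase $\cos(\theta_{\hat x}-\theta_d)$, apply Lemma \ref{lem2} on the complement after rescaling the phase so that $|u'|\ge 1$, and balance with $\delta=R^{-1/2}$, using reciprocity for the $C^1$ bound on $u^\infty(\hat x,\cdot)$. The only (harmless) difference is that you obtain a pointwise-in-$\hat x$ bound by controlling the stationary-point neighbourhoods with $\|\varphi\|_{L^\infty}\lesssim\|\varphi\|_{H^1(\Sp^1)}$, whereas the paper stays in $L^2$ and bounds that contribution by $C\delta\|\varphi\|_{L^2}$ via Cauchy--Schwarz and integration in $\theta_{\hat x}$; both yield the same final estimate.
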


\begin{proof}
From (\ref{eq10}) and the fact that $C^\infty(\Sp^1)$ is dense in $H^1(\Sp^1)$, it suffices to show
that for any $\varphi\in C^\infty(\Sp^1)$ and $R$ large enough,
\be\label{eq13}
\|L_{PW,R}\varphi\|_{L^2(\Sp^1)}\leq \frac{C}{R^{1/2}}\|\varphi\|_{H^1(\Sp^1)}.
\en

Let $\varphi$ be arbitrarily fixed function in $C^\infty(\Sp^1)$.
For $\hat{x},d\in\Sp^1$, let $\theta_{\hat{x}},\theta_d$ be the real numbers as defined at the end
of last section. Then, by the change of variables we have
\ben
\left(L_{PW,R}\varphi\right)(\hat{x})=\int^{2\pi}_{0}e^{2ikR\cos(\theta_{\hat{x}}-\theta_d)}
\ov{\wid{u}^\infty(\theta_{\hat{x}},\theta_d)}\wid{\varphi}(\theta_d)d\theta_d
=:\left(\wid{L}_{PW,R}\wid{\varphi}\right)(\theta_{\hat{x}}),
\enn
where
$\wid{u}^\infty(\theta_{\hat{x}},\theta_d):=u^\infty(\hat{x},d)$
and $\wid{\varphi}(\theta_d):=\varphi(d)$ for $\theta_{\hat{x}},\theta_d\in [0,2\pi]$.
Let us define
$\wid{l}(\theta_{\hat{x}},\theta_d):=\ov{\wid{u}^\infty(\theta_{\hat{x}},\theta_d)}\wid{\varphi}(\theta_d)$
for $\theta_{\hat{x}},\theta_d\in [0,2\pi]$.
Since $u^\infty(\hat{x},d)$ is analytic in $d\in\Sp^1$ and $\varphi\in C^\infty(\Sp^1)$,
then $\wid{l}(\theta_{\hat{x}},\theta_d)$ and $\wid{\varphi}(\theta_d)$ can be extended as $C^\infty$-smooth
functions on $\R$ and $2\pi$-periodic with respect to $\theta_d$.
Then it follows by the change of variables that for $\theta_{\hat{x}}\in[0,2\pi]$,
\ben
&&\left(\wid{L}_{PW,R}\wid{\varphi}\right)(\theta_{\hat{x}})\\
&&\qquad\;=\left[\int^{\theta_{\hat{x}}-\frac{\pi}{2}}_{\theta_{\hat{x}}-\pi}
+\int^{\theta_{\hat{x}}}_{\theta_{\hat{x}}-\frac{\pi}{2}}
+\int^{\theta_{\hat{x}}+\frac{\pi}{2}}_{\theta_{\hat{x}}}
+\int^{\theta_{\hat{x}}+\pi}_{\theta_{\hat{x}}+\frac{\pi}{2}}\right]e^{2ikR\cos(\theta_{\hat{x}}-\theta_d)}
\ov{\wid{u}^\infty(\theta_{\hat{x}},\theta_d)}\wid{\varphi}(\theta_d)d\theta_d\\
&&\qquad\;=\int^{\frac{\pi}{2}}_0 e^{-2ikR\cos(t)}\wid{l}(\theta_{\hat{x}},t+\theta_{\hat{x}}-\pi)dt
+\int^{\frac{\pi}{2}}_0 e^{2ikR\cos (t)}\wid{l}(\theta_{\hat{x}},\theta_{\hat{x}}-t)dt\\
&&\qquad\;\;\;+\int^{\frac{\pi}{2}}_0 e^{2ikR\cos (t)}\wid{l}(\theta_{\hat{x}},t+\theta_{\hat{x}})dt
+\int^{\frac{\pi}{2}}_0 e^{-2ikR\cos (t)}\wid{l}(\theta_{\hat{x}},\theta_{\hat{x}}+\pi-t)dt\\
&&\qquad\;=:I_{1,PW}(\theta_{\hat{x}})+I_{2,PW}(\theta_{\hat{x}})+I_{3,PW}(\theta_{\hat{x}})
+I_{4,PW}(\theta_{\hat{x}}).
\enn

We now estimate $I_{i,PW}$, $i=1,2,3,4$. We first consider $I_{1,PW}$.
Let $\delta>0$ be small enough so that $\sin\delta\ge\delta/2$ and let $R$ be large enough.
Let $\theta_{\hat{x}}\in[0,2\pi]$ be arbitrarily fixed.
Then we have
\ben
I_{1,PW}(\theta_{\hat{x}})&=&\left[\int^{\delta}_0+\int^{\frac{\pi}{2}}_{\delta}\right]
e^{-2ikR\cos(t)}\wid{l}(\theta_{\hat{x}},t+\theta_{\hat{x}}-\pi)dt
=:I^{(1)}_{1,PW}(\theta_{\hat{x}})+I^{(2)}_{1,PW}(\theta_{\hat{x}}).
\enn
Set $f(t)=-2\cos(t)/\delta$. Then $f'(t)=2\sin (t)/\delta$. Thus, for $t\in[\delta,\pi/2]$
we have $|f'(t)|=2\sin(t)/\delta\geq 2\sin\delta/\delta\geq 1$ and $f'(t)$ is monotone in $[\delta,\pi/2]$.
By using Lemma \ref{lem2}, the formula (\ref{eq7}) and the reciprocity relation
that $u^\infty(\hat{x},d)=u^\infty(-d,-\hat{x})$ for $\hat{x},d\in\Sp^1$ (see, e.g., \cite{KG08}),
it is obtained that
\ben
|I^{(2)}_{1,PW}(\theta_{\hat{x}})|&=&\left|\int^{\frac{\pi}{2}}_\delta e^{ikR\delta f(t)}
\wid{l}(\theta_{\hat{x}},t+\theta_{\hat{x}}-\pi)dt\right|\\
&\le&\frac{C}{R\delta}\left(|\wid{l}(\theta_{\hat{x}},\theta_{\hat{x}}-\frac{\pi}{2})|
+\int^{\frac{\pi}{2}}_\delta\left|\frac{d}{dt}\wid{l}(\theta_{\hat{x}},t+\theta_{{\hat{x}}}-\pi)\right|dt\right)\\
&\le&\frac{C}{R\delta}\left(|\wid{\varphi}(\theta_{\hat{x}}-\frac{\pi}{2})|
+\|\wid{\varphi}\|_{H^1[0,2\pi]}\right).
\enn
This yields that
\be\label{eq11}
\|I^{(2)}_{1,PW}\|_{L^2[0,2\pi]}\leq \frac{C}{R\delta}\|\widetilde{\varphi}\|_{H^1[0,2\pi]}.
\en
Further, it follows from the formula (\ref{eq7}) that
\ben
|I^{(1)}_{1,PW}(\theta_{\hat{x}})|&\leq&
\int^{\delta}_0\left|\wid{l}(\theta_{\hat{x}},t+\theta_{\hat{x}}-\pi)\right|dt\\
&\leq& \left(\int^\delta_0 1^2 dt\right)^{1/2}
\left(\int^\delta_0\left|\wid{l}(\theta_{\hat{x}},t+\theta_{\hat{x}}-\pi)\right|^2dt\right)^{1/2}\\
&\leq& C\delta^{\frac{1}{2}}\left(\int^\delta_0
\left|\wid{\varphi}(t+\theta_{\hat{x}}-\pi)\right|^2 dt\right)^{1/2},
\enn
which implies that
\be\label{eq12}
\|I^{(1)}_{1,PW}\|_{L^2[0,2\pi]}\le C\delta\|\wid{\varphi}\|_{L^2[0,2\pi]}.
\en
Using (\ref{eq11}) and (\ref{eq12}) and taking $\delta=R^{-{1}/{2}}$ give
\ben
\|I_{1,PW}\|_{L^2[0,2\pi]}\le C\delta\|\widetilde{\varphi}\|_{L^2[0,2\pi]}
+\frac{C}{R\delta}\|\widetilde{\varphi}\|_{H^1[0,2\pi]}
\leq \frac{C}{R^{1/2}}\|\widetilde{\varphi}\|_{H^1[0,2\pi]}.
\enn
By a similar argument we can obtain that
\ben
\|I_{i,PW}\|_{L^2[0,2\pi]}\le\frac{C}{R^{1/2}}\|\wid{\varphi}\|_{H^1[0,2\pi]},\quad\; i=2,3,4.
\enn
Hence it follows that
\ben
\|L_{PW,R}\varphi\|_{L^2(\Sp^1)}=\|\widetilde{L}_{PW,R}\widetilde{\varphi}\|_{L^2[0,2\pi]}
\leq \frac{C}{R^{1/2}}\|\widetilde{\varphi}\|_{H^1[0,2\pi]}
\leq\frac{C}{R^{1/2}}\|\varphi\|_{H^1{(\Sp^1)}},
\enn
that is, (\ref{eq13}) holds. The proof is thus complete.
\end{proof}

\begin{lemma}\label{lem4}
For any $\varphi\in L^2(\Sp^1)$ and for $R>0$ large enough, we have
\ben
\|H^{(2)}_{PW,R}\varphi\|_{L^2(\Sp^1)}\leq \frac{C}{R}\|\varphi\|_{L^2(\Sp^1)},
\enn
where $C>0$ is a constant independent of $R$.
\end{lemma}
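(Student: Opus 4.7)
The plan is to exploit the fact that, in contrast to Lemma \ref{lem3}, no oscillatory integral estimate is needed here: the kernel defining $H^{(2)}_{PW,R}$ already decays pointwise at the required rate, so a direct size estimate together with the Cauchy--Schwarz inequality on the compact set $\Sp^1$ will suffice.

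First, I would split the operator into three pieces according to the three summands inside the integrand of (\ref{eq16}):
\ben
\bigl(H^{(2)}_{PW,R}\varphi\bigr)(\hat{x})=J_1(\hat{x})+J_2(\hat{x})+J_3(\hat{x}),
\enn
where $J_1$ collects the $u^s_{Res}(R\hat{x},d)$ term, $J_2$ the $\ov{u^s_{Res}(R\hat{x},d)}e^{2ikR\hat{x}\cdot d}$ term, and $J_3$ the $|u^s(R\hat{x},d)|^2e^{ikR\hat{x}\cdot d}$ term. From (\ref{eq6}) together with (\ref{eq7}) and (\ref{eq8}), one immediately gets the pointwise bound
\ben
|u^s(R\hat{x},d)|\le \frac{C}{\sqrt{8k\pi}}\frac{\|u^\infty(\cdot,d)\|_{C(\Sp^1)}}{R^{1/2}}+\frac{C}{R^{3/2}}\le \frac{C}{R^{1/2}}
\enn
for $R$ sufficiently large, uniformly in $\hat{x},d\in\Sp^1$, and consequently $|u^s(R\hat{x},d)|^2\le C/R$.

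Second, for each $J_i$ I would apply the trivial estimate
\ben
\Bigl|\int_{\Sp^1}K(\hat{x},d)\varphi(d)\,ds(d)\Bigr|\le \|K(\hat{x},\cdot)\|_{L^\infty(\Sp^1)}\,|\Sp^1|^{1/2}\,\|\varphi\|_{L^2(\Sp^1)}.
\enn
The bound (\ref{eq8}) gives $\|u^s_{Res}(R\hat{x},\cdot)\|_{L^\infty(\Sp^1)}\le C/R^{3/2}$, so both $J_1$ and $J_2$ are pointwise bounded by $CR^{-3/2}\|\varphi\|_{L^2(\Sp^1)}$; the uniform bound $|u^s(R\hat{x},d)|^2\le C/R$ yields $|J_3(\hat{x})|\le CR^{-1}\|\varphi\|_{L^2(\Sp^1)}$. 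Integrating the square of these pointwise bounds over $\hat{x}\in\Sp^1$ gives, respectively, $\|J_1\|_{L^2(\Sp^1)},\|J_2\|_{L^2(\Sp^1)}\le CR^{-3/2}\|\varphi\|_{L^2(\Sp^1)}$ and $\|J_3\|_{L^2(\Sp^1)}\le CR^{-1}\|\varphi\|_{L^2(\Sp^1)}$.

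Combining the three pieces by the triangle inequality, the dominant contribution is $J_3$, and we obtain
\ben
\|H^{(2)}_{PW,R}\varphi\|_{L^2(\Sp^1)}\le \frac{C}{R}\|\varphi\|_{L^2(\Sp^1)}
\enn
for $R$ large enough, as desired. There is no real obstacle in this argument; the only point that needs to be verified carefully is that the constant $C$ in the bounds from Lemma \ref{lem1} is uniform in $d\in\Sp^1$, which is exactly what (\ref{eq7})--(\ref{eq8}) provide. Note in particular that the rate $R^{-1}$ here is worse than the $R^{-3/2}$ rate furnished by the residual terms, and it is forced by the quadratic term $|u^s|^2$; this is consistent with the statement of the lemma and explains why only $L^2$-regularity (rather than $H^1$) of $\varphi$ is required.
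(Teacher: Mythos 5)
Your proposal is correct and follows essentially the same route as the paper: the paper's proof simply invokes the pointwise bound $|u^s(R\hat{x},d)|\le CR^{-1/2}$ from Lemma \ref{lem1} together with (\ref{eq8}) and (\ref{eq16}), which is exactly the direct kernel-size estimate you carry out in detail. Your version just makes explicit the Cauchy--Schwarz step and the observation that the $|u^s|^2$ term dictates the $R^{-1}$ rate.
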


\begin{proof}
Lemma \ref{lem1} gives that for any $\hat{x},d\in\Sp^1$, $|u^s(R\hat{x},d)|\leq{C}{R^{-1/2}}$
when $R$ is large enough. This, together with (\ref{eq8}) and (\ref{eq16}), implies the required result.
The proof is then completed.
\end{proof}

By the formula (\ref{eq14}) and Lemmas \ref{lem3} and \ref{lem4}, we can obtain the following result
on the relationship between the operators $N^{PW}_{R}$ and $F$ when $R$ is sufficiently large.

\begin{lemma}\label{lem5}
For $R>0$ large enough, we have
\ben
\left\|N^{PW}_{R}-\frac{e^{i\pi/4}}{\sqrt{8k\pi}}\frac{e^{ikR}}{R^{1/2}}F\right\|_{H^1(\Sp^1)
\rightarrow L^2(\Sp^1)}\leq\frac{C}{R},
\enn
where $C>0$ is a constant independent of $R$.
\end{lemma}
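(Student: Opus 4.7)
The plan is to read off the lemma directly from the decomposition (\ref{eq14}) together with the two estimates furnished by Lemmas \ref{lem3} and \ref{lem4}. All the real work has already been done in those lemmas, so what remains is a short operator-norm bookkeeping argument.

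First I would rewrite (\ref{eq14}) as the operator identity
\begin{equation*}
N^{PW}_{R}-\frac{e^{i\pi/4}}{\sqrt{8k\pi}}\frac{e^{ikR}}{R^{1/2}}F
=H^{(1)}_{PW,R}+H^{(2)}_{PW,R},
\end{equation*}
which already holds pointwise on any $\varphi$ in the domain considered by (\ref{eq9}), in particular on $\varphi\in H^1(\Sp^1)$. Taking the $L^2(\Sp^1)$-norm of the right-hand side applied to an arbitrary $\varphi\in H^1(\Sp^1)$ and using the triangle inequality reduces the estimate to bounding $\|H^{(1)}_{PW,R}\varphi\|_{L^2(\Sp^1)}$ and $\|H^{(2)}_{PW,R}\varphi\|_{L^2(\Sp^1)}$ separately.

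Next I would invoke Lemma \ref{lem3} to control the first summand by $(C/R)\|\varphi\|_{H^1(\Sp^1)}$. For the second summand, Lemma \ref{lem4} gives $(C/R)\|\varphi\|_{L^2(\Sp^1)}$, which I would then upgrade to $(C/R)\|\varphi\|_{H^1(\Sp^1)}$ via the continuous embedding $H^1(\Sp^1)\hookrightarrow L^2(\Sp^1)$. Adding these two contributions and taking the supremum over $\varphi\in H^1(\Sp^1)$ with $\|\varphi\|_{H^1(\Sp^1)}=1$ yields the claimed operator-norm bound with a (possibly enlarged) constant $C>0$ independent of $R$.

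There is essentially no obstacle at this stage: the delicate stationary-phase/oscillatory-integral analysis that produces the $R^{-1/2}$ gain in $L_{PW,R}$ (and hence the $R^{-1}$ gain in $H^{(1)}_{PW,R}$ after the prefactor $R^{-1/2}$ is accounted for) and the pointwise decay of $u^s_{Res}$ and $|u^s|^2$ have already been absorbed into Lemmas \ref{lem3} and \ref{lem4}. The only minor item to be careful about is to ensure $R$ is taken large enough to apply both lemmas simultaneously, which is harmless since both require only $R$ large.
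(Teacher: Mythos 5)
Your proposal is correct and follows exactly the route the paper intends: Lemma \ref{lem5} is stated in the paper without a written proof precisely because it is the immediate consequence of the decomposition (\ref{eq14}) combined with Lemmas \ref{lem3} and \ref{lem4}, which is what you carry out. The only detail you add explicitly---upgrading the $L^2$ bound from Lemma \ref{lem4} via the embedding $H^1(\Sp^1)\hookrightarrow L^2(\Sp^1)$---is the right and standard step, so nothing is missing.
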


Denote by $\left(N^{PW}_{R}\right)^*$ and $F^*$ the adjoint operator of the operators $N^{PW}_{R}$ and $F$,
respectively. Then, by a similar argument as in the proof of Lemma \ref{lem5} we have the following result
on the relationship between the operators $\left(N^{PW}_{R}\right)^*$ and $F^*$ when $R$ is large enough.

\begin{lemma}\label{lem6}
For $R>0$ large enough, we have
\ben
\left\|\left(N^{PW}_{R}\right)^*-\frac{e^{-i\pi/4}}{\sqrt{8k\pi}}\frac{e^{-ikR}}{R^{1/2}}F^*\right\|_{H^1(\Sp^1)
\rightarrow L^2(\Sp^1)}\leq\frac{C}{R},
\enn
where $C>0$ is a constant independent of $R$.
\end{lemma}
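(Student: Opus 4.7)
The plan is to mirror the proof of Lemma \ref{lem5}, interchanging the roles of $\hat{x}$ and $d$. First I would write down the kernel of $(N^{PW}_{R})^*$ explicitly: since $|u(R\hat{x},d)|^2-1$ is real, the definition (\ref{eq9}) yields
\[
\bigl((N^{PW}_{R})^*\psi\bigr)(d)=\int_{\Sp^1}\bigl(|u(R\hat{x},d)|^2-1\bigr)e^{-ikR\hat{x}\cdot d}\psi(\hat{x})\,ds(\hat{x}),\qquad \psi\in H^1(\Sp^1).
\]

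Next, using the identity $|u|^2-1=u^s e^{-ikR\hat{x}\cdot d}+\overline{u^s}e^{ikR\hat{x}\cdot d}+|u^s|^2$ together with the asymptotic expansion (\ref{eq6}) of $u^s$, I would decompose the integrand and hence the operator into three pieces:
\[
(N^{PW}_{R})^*=\frac{e^{-i\pi/4}}{\sqrt{8k\pi}}\frac{e^{-ikR}}{R^{1/2}}F^*+\wid{H}^{(1)}_{PW,R}+\wid{H}^{(2)}_{PW,R}.
\]
The leading $F^*$-term arises from substituting (\ref{eq6}) into $\overline{u^s}$ and recognising $(F^*\psi)(d)=\int_{\Sp^1}\overline{u^\infty(\hat{x},d)}\psi(\hat{x})\,ds(\hat{x})$; the operator $\wid{H}^{(1)}_{PW,R}$ has the form $\wid{H}^{(1)}_{PW,R}\psi(d)=\frac{e^{i\pi/4}}{\sqrt{8k\pi}}\frac{e^{ikR}}{R^{1/2}}\int_{\Sp^1}u^\infty(\hat{x},d)e^{-2ikR\hat{x}\cdot d}\psi(\hat{x})\,ds(\hat{x})$ and absorbs the expansion of $u^s$ multiplied by the extra oscillatory factor $e^{-2ikR\hat{x}\cdot d}$; finally $\wid{H}^{(2)}_{PW,R}$ collects the three residual contributions built from $u^s_{Res}$ and $|u^s|^2$, completely analogously to (\ref{eq16}).

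What remains is to show $\|\wid{H}^{(i)}_{PW,R}\|_{H^1(\Sp^1)\to L^2(\Sp^1)}=O(R^{-1})$ for $i=1,2$, by arguments parallel to Lemmas \ref{lem3} and \ref{lem4}. The residual piece $\wid{H}^{(2)}_{PW,R}$ is handled exactly as in Lemma \ref{lem4}, because the pointwise bounds $|u^s_{Res}|\le CR^{-3/2}$ and $|u^s|\le CR^{-1/2}$ from Lemma \ref{lem1} are symmetric in $\hat{x}$ and $d$. For the oscillatory piece $\wid{H}^{(1)}_{PW,R}$, one changes variables to angles, splits the $\theta_{\hat{x}}$-integration into four quarter-circle arcs around the two stationary points of $\hat{x}\mapsto\hat{x}\cdot d$, isolates a $\delta$-neighborhood of each stationary point, and applies Lemma \ref{lem2} on the complement; with $\delta=R^{-1/2}$ this delivers an $O(R^{-1/2}\|\wid{\psi}\|_{H^1[0,2\pi]})$ estimate on the inner integral, which combines with the $R^{-1/2}$ prefactor of $\wid{H}^{(1)}_{PW,R}$ to give the desired $O(R^{-1})$.

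The step most deserving of care is verifying the $C^1$-regularity of the amplitude $u^\infty(\hat{x},d)$ required by Lemma \ref{lem2} in the new integration variable $\theta_{\hat{x}}$, uniformly in $\theta_d$. This is in fact easier than in Lemma \ref{lem3}: there the reciprocity relation $u^\infty(\hat{x},d)=u^\infty(-d,-\hat{x})$ had to be invoked to promote (\ref{eq7}) into smoothness in the $d$-variable, whereas here the smoothness is needed in the variable $\hat{x}$ to which (\ref{eq7}) applies directly. Thus no genuinely new obstacle arises beyond careful bookkeeping of signs and phase factors in the decomposition, and assembling the three pieces produces the claimed $R^{-1}$ bound.
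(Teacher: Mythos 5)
Your proposal is correct and follows essentially the same route as the paper: the paper likewise writes $(N^{PW}_{R})^*$ as the sum of $\frac{e^{-i\pi/4}}{\sqrt{8k\pi}}\frac{e^{-ikR}}{R^{1/2}}F^*$ and the adjoints $H^{(1)*}_{PW,R}$, $H^{(2)*}_{PW,R}$ (whose explicit kernels coincide with your $\wid{H}^{(1)}_{PW,R}$, $\wid{H}^{(2)}_{PW,R}$), and then estimates these by repeating the arguments of Lemmas \ref{lem3} and \ref{lem4} via Lemmas \ref{lem1} and \ref{lem2}. Your observation that the $C^1$ bound (\ref{eq7}) now applies directly in the integration variable $\hat{x}$, so that the reciprocity relation is not needed here, is a correct refinement of the paper's terse "similarly as in the proof of Lemmas \ref{lem3} and \ref{lem4}".
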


\begin{proof}
By (\ref{eq14})-(\ref{eq16}) it follows that
\be\label{eq17}
\left(N^{PW}_{R}\right)^*&=&\frac{e^{-i\pi/4}}{\sqrt{8k\pi}}\frac{e^{-ikR}}{R^{1/2}}F^*
+H^{(1)*}_{PW,R}+H^{(2)*}_{PW,R},
\en
where $H^{(1)*}_{PW,R}$ and $H^{(2)*}_{PW,R}$ denote the adjoint operator of $H^{(1)}_{PW,R}$ and
$H^{(2)}_{PW,R}$, respectively, and are represented as follows:
for $d\in\Sp^1$ and $\psi\in H^r(\Sp^1)$ with $r\in\R$,
\ben
\left(H^{(1)*}_{PW,R}\psi\right)(d):=\frac{e^{i\pi/4}}{\sqrt{8k\pi}}\frac{e^{ikR}}{R^{1/2}}
\left(L^*_{PW,R}\psi\right)(d)
\enn
with
\ben
\left(L^*_{PW,R}\psi\right)(d)
:=\int_{\Sp^1}{u^\infty(\hat{x},d)}e^{-2ikR\hat{x}\cdot d}\psi(\hat{x})ds(\hat{x})
\enn
and
\ben
&&\left(H^{(2)*}_{PW,R}\psi\right)(d)\\
&&\quad:=\int_{\Sp^1}\left[\ov{u^s_{Res}(R\hat{x},d)}+{u^s_{Res}(R\hat{x},d)}e^{-2ikR\hat{x}\cdot d}
+|u^s(R\hat{x},d)|^2 e^{-ik R\hat{x}\cdot d}\right]\psi(\hat{x})ds(\hat{x}).
\enn
Similarly as in the proof of Lemmas \ref{lem3} and \ref{lem4}, we can apply Lemmas \ref{lem1} and \ref{lem2}
to obtain that
\be\label{eq18}
\left\|H^{(1)*}_{PW,R}\right\|_{H^1(\Sp^1)\rightarrow L^2(\Sp^1)}\leq\frac{C}{R},\qquad
\left\|H^{(2)*}_{PW,R}\right\|_{L^2(\Sp^1)\rightarrow L^2(\Sp^1)}\leq\frac{C}{R}
\en
for $R$ large enough. The required estimate then follows from (\ref{eq17}) and (\ref{eq18}).
The proof is thus complete.
\end{proof}

Making use of Lemmas \ref{lem5} and \ref{lem6}, we can prove the following theorem.

\begin{theorem}\label{thm1}
For $R>0$ large enough, we have
\ben
\left\|N^{PW}_{R}-\frac{e^{i\pi/4}}{\sqrt{8k\pi}}\frac{e^{ikR}}{R^{1/2}}
F\right\|_{H^{1/2}(\Sp^1)\rightarrow H^{-1/2}(\Sp^1)}\leq\frac{C}{R},
\enn
where $C>0$ is a constant independent of $R$.
\end{theorem}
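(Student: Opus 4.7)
The plan is to derive the $H^{1/2}\to H^{-1/2}$ estimate from the two endpoint estimates already in hand (Lemmas \ref{lem5} and \ref{lem6}) by a duality/interpolation argument. Set
\[
A_R := N^{PW}_{R} - \frac{e^{i\pi/4}}{\sqrt{8k\pi}}\frac{e^{ikR}}{R^{1/2}} F,
\]
so that its Hilbert-space adjoint is exactly $A_R^* = (N^{PW}_R)^* - \frac{e^{-i\pi/4}}{\sqrt{8k\pi}}\frac{e^{-ikR}}{R^{1/2}}F^*$. Lemma \ref{lem5} then says $\|A_R\|_{H^1(\Sp^1)\to L^2(\Sp^1)}\le C/R$, and Lemma \ref{lem6} says $\|A_R^*\|_{H^1(\Sp^1)\to L^2(\Sp^1)}\le C/R$.

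The first step I would take is to translate the adjoint estimate into an estimate for $A_R$ itself. Using the duality between the $L^2(\Sp^1)$ inner product and the extended pairing $\langle\cdot,\cdot\rangle$ between $H^{-1}(\Sp^1)$ and $H^1(\Sp^1)$, the identification $(H^1(\Sp^1))'=H^{-1}(\Sp^1)$ and $(L^2(\Sp^1))'=L^2(\Sp^1)$ gives
\[
\|A_R\|_{L^2(\Sp^1)\to H^{-1}(\Sp^1)} = \|A_R^*\|_{H^1(\Sp^1)\to L^2(\Sp^1)} \le \frac{C}{R}.
\]
Combined with Lemma \ref{lem5}, this supplies the two endpoint bounds needed to feed into interpolation.

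The second step is to invoke complex (or equivalently Lions--Peetre real) interpolation on the Sobolev scale of the closed smooth manifold $\Sp^1$. The standard identifications
\[
[H^1(\Sp^1),\,L^2(\Sp^1)]_{1/2}=H^{1/2}(\Sp^1),\qquad [L^2(\Sp^1),\,H^{-1}(\Sp^1)]_{1/2}=H^{-1/2}(\Sp^1)
\]
hold, so applying interpolation to the operator $A_R$ yields
\[
\|A_R\|_{H^{1/2}(\Sp^1)\to H^{-1/2}(\Sp^1)} \le \|A_R\|_{H^1\to L^2}^{1/2}\,\|A_R\|_{L^2\to H^{-1}}^{1/2} \le \frac{C}{R},
\]
which is precisely the stated estimate.

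I do not expect any genuine obstacle in carrying out this plan; all of the analytic content sits in the already proved Lemmas \ref{lem3}--\ref{lem6}. The only things to keep track of are that the duality pairing used to identify $(H^1)'$ with $H^{-1}$ on $\Sp^1$ is consistent with the one referred to in the paper (this is indeed the case, since $\langle\cdot,\cdot\rangle$ is introduced as the extension of the $L^2(\Sp^1)$ inner product), and that the Sobolev scale on the compact analytic manifold $\Sp^1$ is well known to form an interpolation scale in the above sense. After these standard facts are cited, the bound follows by the direct computation displayed above.
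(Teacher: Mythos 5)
Your proposal is correct and follows essentially the same route as the paper: the authors likewise convert Lemma \ref{lem6} via duality into the bound $\|A_R\|_{L^2(\Sp^1)\rightarrow H^{-1}(\Sp^1)}\leq C/R$ and then interpolate this against the $H^1(\Sp^1)\rightarrow L^2(\Sp^1)$ bound of Lemma \ref{lem5}, citing the interpolation property of Sobolev spaces (Theorem 8.13 in Kress's book) to land at $H^{1/2}(\Sp^1)\rightarrow H^{-1/2}(\Sp^1)$. Your write-up simply makes explicit the duality identification and the interpolation identities that the paper leaves implicit.
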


\begin{proof}
Let $R$ be large enough. First, it follows from Lemma \ref{lem6} that
\ben
\left\|N^{PW}_{R}-\frac{e^{i\pi/4}}{\sqrt{8k\pi}}\frac{e^{ikR}}{R^{1/2}}
F\right\|_{L^2(\Sp^1)\rightarrow H^{-1}(\Sp^1)}\leq\frac{C}{R}.
\enn
This, together with Lemma \ref{lem5} and the interpolation property of Sobolev spaces
(see \cite[Theorem 8.13]{K14}), gives the required estimate.
The proof is thus complete.
\end{proof}

\begin{remark}\label{re3.8} {\rm {\color{HW}
From the above discussions, it can be seen that the constant $C$ in Theorem \ref{thm1} depends on the wave number $k$.
In many applications for inverse scattering problems with phaseless data, the wave number $k$ is usually large,
and thus it is interesting to study the explicit dependence on $k$ of the constant $C$.
However, this is challenging since, to do so, we need to study the explicit dependence on $k$ of
the far-field pattern $u^\infty(\hat{x},d)$ (note that the constant $C$ also depends on $u^\infty(\hat{x},d)$).
As far as we know, there is no result available for the explicit dependence on $k$ of $u^\infty(\hat{x},d)$.
In fact, it is also very difficult to study the explicit dependence on $k$ of the solution of
the scattering problems considered.
So far, there are several results concerning the explicit dependence on $k$
of the solution of the scattering problems in some special cases
(see, e.g., \cite{CM08} for the case of a starlike sound-soft obstacle,
\cite{S14} for the case of an impedance obstacle with the special impedance function $\rho=i\eta$ with a positive
constant $\eta$, and \cite{MS19} for the case of a starlike penetrable obstacle with a constant refractive index).
But, to the best of our knowledge, no result is available for more general cases.
}}
\end{remark}

\section{The approximate factorization method}\label{sec3}
\setcounter{equation}{0}

In this section, we make use of the asymptotic behavior of the phaseless {\color{HW}total-field} operator to develop
an approximate factorization method for the inverse problem.

{\color{HW}
From Theorem \ref{thm1} we know that the asymptotic property of the phaseless total-field operator $N^{PW}_R$ holds
only for the case when $N^{PW}_R$ is an operator from $H^{1/2}(\Sp^1)$ to $H^{-1/2}(\Sp^1)$.
Then, by the classical factorization method \cite{KG08} we need to modify $N^{PW}_R$
by introducing the operators $B_{1/2}$ and $B^*_{1/2}$ defined below so that the modified operator $\wid{N}^{PW}_{R}$
(see (\ref{eq37}) below) is an operator from $L^2(\Sp^1)$ to $L^2(\Sp^1)$. Thus,
an approximate factorization method can be developed for the inverse problem by applying
the classical factorization method \cite{KG08} to the modified far-field operator $\wid{F}$ defined in (\ref{eq38})
(see Theorem \ref{thm3}) together with the asymptotic property of the modified operator $\wid{N}^{PW}_{R}$
in Theorem \ref{thm2} (see Remark \ref{rem2}).
}

We now introduce some auxiliary operators.
For $\hat{x}\in\Sp^1$ define $\varphi_m(\hat{x}):={1}/{\sqrt{2\pi}}e^{i m\theta_{\hat{x}}}$,
$m\in\mathbb{Z}$, where $\theta_{\hat{x}}\in[0,2\pi]$ is defined as above. It is well known that
$\{\varphi_m:m\in\mathbb{Z}\}$ is a complete orthonormal system in $L^2(\Sp^1)$.
Thus, for any $\varphi\in L^2(\Sp^1)$ we have that, in the sense of mean square convergence,
\be\label{eq19}
\varphi(\hat{x})=\sum\limits^{+\infty}_{m=-\infty} a_m\varphi_m(\hat{x}),\qquad a_m:=(\varphi,\varphi_m)_{L^2(\Sp^1)}=\int_{\Sp^1}\varphi(\hat{x})\ov{\varphi_m(\hat{x})}ds(\hat{x}).
\en
Further, it is known that $H^r(\Sp^1)$ with $r\geq0$ is a Hilbert space under the norm
$\|\varphi\|_{H^r(\Sp^1)}:=[\sum^{+\infty}_{m=-\infty}(1+m^2)^r|a_m|^2]^{1/2}$ for $\varphi\in H^r(\Sp^1)$
with the coefficients $a_m$ given in (\ref{eq19}).
For more properties of the Sobolev Space $H^r(\Sp^1)$, $r\geq0$, and its
dual space $H^{-r}(\Sp^1)$, the reader is referred to \cite{CC14,K14}.
Let $B_{1/2}$ be the operator defined by
\ben
B_{1/2}\varphi:=\sum\limits^{+\infty}_{m=-\infty}(1+m^2)^{-1/4}a_m\varphi_m
\enn
for $\varphi\in L^2(\Sp^1)$ with the coefficients $a_m$ given in (\ref{eq19}) and let $B^*_{1/2}$ be
the adjoint of $B_{1/2}$. Then we have the following results concerning $B_{1/2}$ and $B^*_{1/2}$.

\begin{lemma}\label{lem7}
$B_{1/2}$ is bijective (and so boundedly invertible) from $L^2(\Sp^1)$ to $H^{1/2}(\Sp^1)$.
Further, $B^*_{1/2}$ is bijective (and so boundedly invertible) from $H^{-1/2}(\Sp^1)$ to $L^2(\Sp^1)$
and given by
\be\label{eq20}
B^*_{1/2}\psi =\sum\limits^{+\infty}_{m=-\infty} (1+m^2)^{-1/4}b_m\varphi_m,\quad
b_m:=\langle\psi,\varphi_m\rangle=\int_{\Sp^1}\psi(\hat{x})\ov{\varphi_m(\hat{x})}ds(\hat{x})\;\;
\en
for $\psi\in H^{-1/2}(\Sp^1)$, where $\langle\cdot,\cdot\rangle$ is the duality pair between
$H^{-1/2}(\Sp^1)$ and $H^{1/2}(\Sp^1)$.
\end{lemma}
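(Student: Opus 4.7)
The plan is to diagonalize everything in the Fourier basis $\{\varphi_m\}_{m\in\Z}$. By construction, $B_{1/2}$ acts on $\varphi = \sum_m a_m\varphi_m \in L^2(\Sp^1)$ by multiplying the $m$th Fourier coefficient by the symbol $\mu_m := (1+m^2)^{-1/4}$. Using the Fourier characterization of the Sobolev norm on $\Sp^1$ recalled just before the lemma, one has
\ben
\|B_{1/2}\varphi\|_{H^{1/2}(\Sp^1)}^2 = \sum_m (1+m^2)^{1/2}\mu_m^2|a_m|^2 = \sum_m |a_m|^2 = \|\varphi\|_{L^2(\Sp^1)}^2,
\enn
so $B_{1/2}$ is an isometric embedding of $L^2(\Sp^1)$ into $H^{1/2}(\Sp^1)$. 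For surjectivity, I would take any $\psi\in H^{1/2}(\Sp^1)$ with Fourier coefficients $c_m$, set $a_m := (1+m^2)^{1/4}c_m$ (so that $\sum_m|a_m|^2 = \|\psi\|_{H^{1/2}(\Sp^1)}^2 < \infty$), and check that $\varphi := \sum_m a_m\varphi_m \in L^2(\Sp^1)$ satisfies $B_{1/2}\varphi = \psi$. Bounded invertibility is then immediate from the norm identity.

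For the adjoint, the first step is to invoke the standard identification of $H^{-1/2}(\Sp^1)$ with the completion consisting of formal series $\psi = \sum_m b_m\varphi_m$ satisfying $\|\psi\|_{H^{-1/2}(\Sp^1)}^2 = \sum_m (1+m^2)^{-1/2}|b_m|^2 < \infty$, where $b_m = \langle\psi,\varphi_m\rangle$ is the duality pairing extending the $L^2$ inner product. The formula (\ref{eq20}) is then derived by pairing $B_{1/2}\varphi$ with $\psi$: taking $\varphi$ to be a trigonometric polynomial first so that only finitely many modes appear, a direct computation gives
\ben
\langle\psi,B_{1/2}\varphi\rangle = \sum_m \mu_m b_m\overline{a_m} = \Bigl(\sum_m \mu_m b_m\varphi_m,\,\varphi\Bigr)_{L^2(\Sp^1)},
\enn
and density of trigonometric polynomials in $L^2(\Sp^1)$ together with continuity of the duality pairing extend this to all $\varphi\in L^2(\Sp^1)$. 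This identifies $B_{1/2}^*\psi = \sum_m\mu_m b_m\varphi_m$, which is precisely (\ref{eq20}). The same Fourier computation yields $\|B_{1/2}^*\psi\|_{L^2(\Sp^1)}^2 = \sum_m\mu_m^2|b_m|^2 = \|\psi\|_{H^{-1/2}(\Sp^1)}^2$, so $B_{1/2}^*$ is an isometry from $H^{-1/2}(\Sp^1)$ into $L^2(\Sp^1)$; surjectivity follows by the analogous construction (given $\chi\in L^2(\Sp^1)$ with Fourier coefficients $d_m$, set $b_m := \mu_m^{-1}d_m$ and $\psi := \sum_m b_m\varphi_m\in H^{-1/2}(\Sp^1)$, then $B_{1/2}^*\psi = \chi$).

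The whole argument is essentially formal once the Fourier-series characterizations of $H^{1/2}(\Sp^1)$ and $H^{-1/2}(\Sp^1)$ are in hand, so I do not expect any substantive obstacle. The only point deserving care is the term-by-term manipulation of $\langle\psi,B_{1/2}\varphi\rangle$ when $\psi\in H^{-1/2}(\Sp^1)$ is not an $L^2$ function; as indicated above, this is handled by first working with trigonometric polynomials and then passing to the limit using the continuity of the pairing.
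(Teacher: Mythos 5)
Your proposal is correct and follows essentially the same route as the paper: both arguments diagonalize $B_{1/2}$ in the Fourier basis $\{\varphi_m\}$, use the norm identity $\|B_{1/2}\varphi\|_{H^{1/2}(\Sp^1)}=\|\varphi\|_{L^2(\Sp^1)}$ together with an explicitly constructed inverse (your preimage construction is exactly the paper's operator $B_{-1/2}$), and identify $B^*_{1/2}$ by pairing $B_{1/2}\varphi$ against $\psi$ and rearranging the series. The only cosmetic difference is that you verify surjectivity of $B^*_{1/2}$ directly via the Fourier characterization of $H^{-1/2}(\Sp^1)$, whereas the paper deduces it from the bounded invertibility of $B_{1/2}$ by duality.
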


\begin{proof}
Let $\varphi\in L^2(\Sp^1)$ with the coefficients $a_m$ given in (\ref{eq19}). Then we have
\ben
\|B_{1/2}\varphi\|^2_{H^{1/2}(\Sp^1)}
&=&\sum^{+\infty}_{m=-\infty}(1+m^2)^{1/2}|(1+m^2)^{-1/4}a_m|^2\\
&=&\sum^{+\infty}_{m=-\infty}|a_m|^2=\|\varphi\|^2_{L^2(\Sp^1)}<+\infty.
\enn
This implies that $B_{1/2}$ is a bounded operator from $L^2(\Sp^1)$ to $H^{1/2}(\Sp^1)$.
For $\varphi\in L^2(\Sp^1)$ with the coefficients $a_m$ given in (\ref{eq19}) define $B_{-1/2}$ by
\ben
B_{-1/2}\varphi:=\sum^{+\infty}_{m=-\infty}(1+m^2)^{1/4}a_m\varphi_m.
\enn
Similarly as above, we can deduce that $B_{-1/2}$ is a bounded operator from $H^{1/2}(\Sp^1)$ to $L^2(\Sp^1)$.
It is easily seen that $B_{-1/2}B_{1/2}\varphi=\varphi$ for $\varphi\in L^2(\Sp^1)$ and
$B_{1/2}B_{-1/2}\varphi=\varphi$ for $\varphi\in H^{1/2}(\Sp^1)$. Then
$B_{1/2}$ is bijective (and so boundedly invertible) from $L^2(\Sp^1)$ to $H^{1/2}(\Sp^1)$,
and thus $B^*_{1/2}$ is also bijective (and so boundedly invertible) from  $H^{-1/2}(\Sp^1)$ to $L^2(\Sp^1)$.
Further, let $\psi\in H^{-1/2}(\Sp^1)$ and let $b_m$ be given in (\ref{eq20}). Then we have
\ben
\langle B_{1/2}\varphi,\psi\rangle
&=&\left\langle\sum\limits^{+\infty}_{m=-\infty}(1+m^2)^{-1/4}a_m\varphi_m,\psi\right\rangle\\
&=&\sum\limits^{+\infty}_{m=-\infty}(1+m^2)^{-1/4}(\varphi,\varphi_m)_{L^2(\Sp^1)}\langle\varphi_m,\psi\rangle\\
&=&\sum^{+\infty}_{m=-\infty}(1+m^2)^{-1/4}(\varphi,\varphi_m)_{L^2(\Sp^1)}\ov{b}_m\\
&=&\left(\varphi,\sum\limits^{+\infty}_{m=-\infty}(1+m^2)^{-1/4}b_m\varphi_m\right)_{L^2(\Sp^1)}.
\enn
Therefore, $B^*_{1/2}$ has the form (\ref{eq20}). This completes the proof.
\end{proof}

With these preparations, we introduce the modified phaseless {\color{HW}total-field} operator
$\wid{N}^{PW}_{R}$ and the modified far-field operator $\wid{F}$ by
\be\label{eq37}
\wid{N}^{PW}_{R}&:=&e^{-i(kR+\pi/4)}B^*_{1/2}N^{PW}_{R} B_{1/2},\\ \label{eq38}
\wid{F}&:=&B^*_{1/2}F B_{1/2},
\en
respectively. The approximate factorization method for the inverse problem will be developed with utilizing
the asymptotic property of the operator $\wid{N}^{PW}_{R}$ for $R$ large enough, as discussed at the beginning
of this section and seen in Remark \ref{rem2} below.

From the property of $B_{1/2}$ and $B^*_{1/2}$, we know that $\wid{N}^{PW}_{R}$ and $\wid{F}$ are bounded
operators from $L^2(\Sp^1)$ to $L^2(\Sp^1)$. Further, with the aid of Theorem \ref{thm1} and Lemma \ref{lem7},
we can obtain the following theorem on the asymptotic property of $\wid{N}^{PW}_{R}$ and $\wid{F}$
for $R$ large enough.

\begin{theorem}\label{thm2}
For $R>0$ large enough we have
\ben
\left\|\wid{N}^{PW}_{R}-\frac{1}{\sqrt{8k\pi R}}\wid{F}\right\|_{L^2(\Sp^1)\rightarrow L^2(\Sp^1)}
\le\frac{C}{R},
\enn
where $C>0$ is a constant independent of $R$.
\end{theorem}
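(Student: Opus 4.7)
The plan is to reduce Theorem \ref{thm2} directly to Theorem \ref{thm1} by exploiting the algebraic structure of the definitions of $\wid{N}^{PW}_{R}$ and $\wid{F}$ together with the mapping properties of $B_{1/2}$ and $B^*_{1/2}$ established in Lemma \ref{lem7}. The key observation is that the scalar prefactor $1/\sqrt{8k\pi R}$ appearing in the statement can be rewritten as $e^{-i(kR+\pi/4)}\cdot e^{i\pi/4} e^{ikR}/\sqrt{8k\pi R}$, which matches precisely the constant $e^{i\pi/4}e^{ikR}/(\sqrt{8k\pi}\,R^{1/2})$ that appears in Theorem \ref{thm1} after factoring out the unimodular phase $e^{-i(kR+\pi/4)}$.

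First I would write the identity
\begin{equation*}
\wid{N}^{PW}_{R}-\frac{1}{\sqrt{8k\pi R}}\wid{F}
= e^{-i(kR+\pi/4)}\, B^*_{1/2}\!\left[N^{PW}_{R}-\frac{e^{i\pi/4}}{\sqrt{8k\pi}}\frac{e^{ikR}}{R^{1/2}}\,F\right]\!B_{1/2},
\end{equation*}
which follows immediately from the definitions (\ref{eq37})--(\ref{eq38}) and the phase computation above. The factor $e^{-i(kR+\pi/4)}$ has modulus one and so contributes nothing to the operator norm.

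Next I would take the $L^2(\Sp^1)\to L^2(\Sp^1)$ operator norm on both sides and apply the standard submultiplicative inequality to split the composition into three factors, bounding
\begin{equation*}
\left\|\wid{N}^{PW}_{R}-\tfrac{1}{\sqrt{8k\pi R}}\wid{F}\right\|_{L^2(\Sp^1)\to L^2(\Sp^1)}
\le \|B^*_{1/2}\|_{H^{-1/2}(\Sp^1)\to L^2(\Sp^1)}\cdot M_R\cdot\|B_{1/2}\|_{L^2(\Sp^1)\to H^{1/2}(\Sp^1)},
\end{equation*}
where $M_R$ denotes the $H^{1/2}(\Sp^1)\to H^{-1/2}(\Sp^1)$ norm of the bracketed operator. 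Lemma \ref{lem7} guarantees that $B_{1/2}$ and $B^*_{1/2}$ are bounded between the appropriate spaces (indeed, each is a bijective isometry in the natural weighted Fourier norm, so the two outer factors are $O(1)$ and independent of $R$), and Theorem \ref{thm1} provides $M_R\le C/R$ for $R$ large enough. Combining these estimates yields the claimed bound.

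There is no serious obstacle here: the entire analytic content sits in Theorem \ref{thm1}, and Theorem \ref{thm2} is essentially a functional-analytic repackaging that moves the asymptotic estimate from the Sobolev-scale pair $(H^{1/2},H^{-1/2})$ down to the Hilbert space $L^2(\Sp^1)$ using the ``lifting'' operators $B_{1/2}$ and $B^*_{1/2}$. The only mild care needed is to verify the phase bookkeeping so that the unimodular factor $e^{-i(kR+\pi/4)}$ cleanly absorbs the oscillatory constant from Theorem \ref{thm1}; after that, the proof is a one-line chain of inequalities.
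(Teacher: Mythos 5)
Your proposal is correct and is exactly the argument the paper intends: the paper states that Theorem \ref{thm2} follows from Theorem \ref{thm1} and Lemma \ref{lem7}, and your identity $\wid{N}^{PW}_{R}-(8k\pi R)^{-1/2}\wid{F}=e^{-i(kR+\pi/4)}B^*_{1/2}[N^{PW}_{R}-\frac{e^{i\pi/4}}{\sqrt{8k\pi}}\frac{e^{ikR}}{R^{1/2}}F]B_{1/2}$ together with the submultiplicative norm estimate is precisely that reduction. The phase bookkeeping and the $O(1)$ bounds on the outer factors are handled correctly.
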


\begin{remark}\label{rem1} {\rm
If the far-field operator $F$ is regarded as an operator from $L^2(\Sp^1)$ to $L^2(\Sp^1)$,
then the modified far-field operator $\wid{F}$ can be rewritten as
\be\label{eq25}
\wid{F}=B^*_{1/2}I^*_0 F I_0 B_{1/2}.
\en
Here, $I_0$ is the imbedding operator from $H^{1/2}(\Sp^1)$ to $L^2(\Sp^1)$ and its adjoint $I^*_0$ is
an imbedding operator from $L^2(\Sp^1)$ to $H^{-1/2}(\Sp^1)$.
From \cite[Chapter 8]{K14} it is seen that $I_0$ is injective and compact with a dense range in $L^2(\Sp^1)$
and $I^*_0$ is injective and compact with a dense range in $H^{-1/2}(\Sp^1)$.
Note that the formula (\ref{eq25}) will be used in the study of the operator $\wid{F}$
(see Theorem \ref{thm3} for details).
}
\end{remark}

From Theorem \ref{thm2} it is known that the leading order term of the operator $\wid{N}^{PW}_{R}$ is
${(1/\sqrt{8k\pi R})}\wid{F}$ as $R\rightarrow+\infty$ in the linear space
$\mathcal{L}(L^2(\Sp^1), L^2(\Sp^1))$ of bounded linear operators from $L^2(\Sp^1)$ to $L^2(\Sp^1)$.
Note that the coefficient ${(1/\sqrt{8k\pi R})}$ is a constant for arbitrarily fixed $R$.
Thus, instead of studying the operator $\wid{N}^{PW}_{R}$ directly, we will investigate the property of
the operator $\wid{F}$, making use of the factorization method presented in \cite{KG08,KL14},
where the factorization of the far-field operator $F$ has been extensively investigated for
inverse obstacle scattering problems.
In what follows, we will employ some useful results in \cite{KG08,KL14} to derive a characterization
of the obstacle $D$ from the operator $\wid{F}$.

Define the boundary integral operators $S,K,K':H^{-1/2}(\pa D)\rightarrow H^{1/2}(\pa D)$ and
$T:H^{1/2}(\pa D)\rightarrow H^{-1/2}(\pa D)$ by
\be\label{eq21}
(S\varphi)(x)&:=&\int_{\pa D}\Phi(x,y)\varphi(y)ds(y),\quad x\in\pa D,\\ \label{eq21a}
(K\varphi)(x)&:=&\int_{\pa D}\frac{\pa\Phi(x,y)}{\pa\nu(y)}\varphi(y)ds(y),\quad x\in\pa D,\\ \label{eq21b}
(K'\varphi)(x)&:=&\int_{\pa D}\frac{\pa\Phi(x,y)}{\pa\nu(x)}\varphi(y)ds(y),\quad x\in\pa D,\\ \label{eq24}
(T\psi)(x)&:=&\frac{\pa}{\pa\nu(x)}\int_{\pa D}\frac{\pa\Phi(x,y)}{\pa\nu(y)}\psi(y)ds(y),\quad x\in\pa D,
\en
where $\Phi(x,y):=(i/4)H^{(1)}_0(k|x-y|)$, $x,y\in\R^2$, $x\neq y$, is the fundamental solution of
the Helmholtz equation $\Delta w+k^2 w=0$ in $\R^2$.
Here, $H^{(1)}_0$ is the Hankel function of the first kind of order $0$.
From \cite{CK13,KG08,M00} it is known that the boundary integral operators
$S,K,K':H^{-1/2}(\pa D)\rightarrow H^{1/2}(\pa D)$ and $T:H^{1/2}(\pa D)\rightarrow H^{-1/2}(\pa D)$
are bounded operators.

We now collect some results in \cite{KG08} for the factorization of the far-field operator $F$
in the cases of a sound-soft obstacle, an impedance obstacle and an inhomogeneous medium.

\begin{lemma}\label{lem8}
Let $D$ be a sound-soft obstacle. Assume that $k^2$ is not a Dirichlet eigenvalue of $-\Delta$ in D.
If the far-field operator $F$ is regarded as the operator from $L^2(\Sp^1)$ to $L^2(\Sp^1)$, then
the following statements hold.
\begin{enumerate}[(a)]
 \item
  The operator $F$ has the factorization
  \ben
  F=-G_{Dir} S^* G^*_{Dir},
  \enn
  where $G_{Dir}:H^{1/2}(\pa D)\rightarrow L^2(\Sp^1)$ is the data-to-pattern operator given by
  $G_{Dir}f=w^\infty$ with $w^\infty\in L^2(\Sp^1)$ being the far-field pattern of the scattered field
  $w^s$ of the exterior Dirichlet problem $(\ref{eq1})-(\ref{eq3})$ with boundary value $f\in H^{1/2}(\pa D)$.
  Here, $G^*_{Dir}: L^2(\Sp^1)\rightarrow H^{-1/2}(\pa D)$ and $S^*: H^{-1/2}(\pa D)\rightarrow H^{1/2}(\pa D)$
  are the adjoint of $G_{Dir}$ and $S$, respectively.
  \item
  The operator $G_{Dir}$ is compact, one-to-one with dense range in $L^2(\Sp^1)$. For any $z\in\R^2$
  define the function $\phi_z\in L^2(\Sp^1)$ by
  \be\label{eq23}
  \phi_z(\hat{x}):=e^{-ik\hat{x}\cdot z},\quad\hat{x}\in\Sp^1.
  \en
  Then $\phi_z$ belongs to the range $\mathcal{R}(G_{Dir})$ of $G_{Dir}$ if and only if $z\in D$.
  \item
  The operator $S$ has the following properties.
  \begin{enumerate}[(i)]
  \item
  The operator $S$ is an isomorphism from $H^{-1/2}(\pa D)$ into $H^{1/2}(\pa D)$.
  \item
  Let $S_i$ be defined by $(\ref{eq21})$ with $k=i$.
  Then $S_i$ is self-adjoint and coercive as an operator from $H^{-1/2}(\pa D)$ into $H^{1/2}(\pa D)$,
  that is, there exists $c_1>0$ with $\langle\varphi,S_i\varphi\rangle\geq c_1\|\varphi\|^2_{H^{-1/2}(\pa D)}$
  for all $\varphi\in H^{-1/2}(\pa D)$.
  \item
  $\I\langle\varphi,S\varphi\rangle < 0$ for all $\varphi\in H^{-1/2}(\pa D)$ with $\varphi\neq0$.
  \item
  The difference $S-S_i$ is compact from $H^{-1/2}(\pa D)$ into $H^{1/2}(\pa D)$.
  \end{enumerate}
\end{enumerate}
\end{lemma}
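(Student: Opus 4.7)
\medskip\noindent\textbf{Proof sketch (plan).}
The three parts of this lemma assemble standard pieces of the factorization-method machinery, and the plan is essentially to verify each one along the lines of \cite{KG08}; below I outline the structural argument I would use.

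For (a), I would factor $F$ through the Herglotz wave operator. Given $\varphi\in L^2(\Sp^1)$, the Herglotz wave function $v_g(x):=\int_{\Sp^1}e^{ikx\cdot d}\varphi(d)ds(d)$ serves as an incident field, and by linearity $F\varphi$ is the far-field pattern of the scattered field it produces. Introducing $H:L^2(\Sp^1)\to H^{1/2}(\pa D)$ by $H\varphi:=v_g|_{\pa D}$, one immediately has $F\varphi=G_{Dir}(-H\varphi)$. The remaining task is the identification $H=S^*G_{Dir}^*$, which follows from a direct duality computation: using that $e^{-ik\hat{x}\cdot y}$ is the far-field pattern of $\Phi(\cdot,y)$, the double integral defining $\langle H\varphi,f\rangle$ can be rewritten so that the inner integral produces $S^*$ paired against $G_{Dir}^*\varphi$. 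Assembling the pieces then yields $F=-G_{Dir}S^*G_{Dir}^*$.

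For (b), I would deduce compactness of $G_{Dir}$ from the analyticity of the far-field pattern and the compact embedding of $H^{1/2}(\pa D)$ into smoother spaces. Injectivity is immediate: $G_{Dir}f=0$ says the scattered field has vanishing far-field pattern, so Rellich's lemma combined with unique continuation gives $w^s\equiv0$ in $\R^2\se\ov{D}$ and hence $f=0$. Density of the range is equivalent to injectivity of $G_{Dir}^*$, which under the non-eigenvalue assumption reduces to uniqueness for an interior Dirichlet problem. For the range characterization, if $z\in D$ then $\Phi(\cdot,z)$ is an admissible exterior scattered field with boundary data $\Phi(\cdot,z)|_{\pa D}$ and far-field pattern $\phi_z$, so $\phi_z\in\mathcal{R}(G_{Dir})$; the converse is the more delicate direction, and I would argue by contradiction, showing that if $\phi_z=G_{Dir}f$ with $z\notin\ov{D}$ then Rellich's lemma forces the scattered field to coincide with $\Phi(\cdot,z)$ on the unbounded component of $\R^2\se(\ov{D}\cup\{z\})$, which is incompatible with the singularity of $\Phi(\cdot,z)$ at $z$.

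For (c), items (i) and (iv) are classical. Property (i) follows because $S$ is Fredholm of index zero on $H^{-1/2}(\pa D)\to H^{1/2}(\pa D)$ with trivial kernel under the non-eigenvalue assumption, while (iv) holds because the logarithmic singularities of $\Phi_k$ and $\Phi_i$ cancel in the difference, leaving a continuous kernel so that $S-S_i$ is compact. For (ii), I would apply Green's identities to the single-layer potential of $\varphi$ in $\R^2\se\pa D$ with $k=i$, producing an energy functional dominating $\|\varphi\|^2_{H^{-1/2}(\pa D)}$ via the jump relations. For (iii), Green's theorem applied in a large ball together with the Sommerfeld radiation condition yields $\I\langle\varphi,S\varphi\rangle$ proportional to $-\|u^\infty\|^2_{L^2(\Sp^1)}$, where $u^\infty$ is the far-field pattern of the single-layer potential of $\varphi$; this is strictly negative for $\varphi\neq0$ since the map from $\varphi$ to this far-field pattern is injective. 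The main obstacles I anticipate are the careful bookkeeping of signs and conjugations in the identity $H=S^*G_{Dir}^*$ in (a), and the converse direction of the range characterization in (b), where the singularity argument must be made rigorous; the four items in (c) are standard and I would cite them from \cite[Chapter 1]{KG08} rather than reprove them.
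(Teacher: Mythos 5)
Your sketch is correct and coincides in substance with the paper's proof, which simply cites \cite[Theorems 1.12 and 1.15, Lemmas 1.13 and 1.14]{KG08}: the Herglotz-operator factorization $F=-G_{Dir}H$ with $H=S^*G^*_{Dir}$, the Rellich/singularity argument for the range test, and the Green's-identity sign computation for $S$ are exactly the arguments behind those citations. The only point worth tightening is the converse range characterization, where you treat $z\notin\ov{D}$ but not $z\in\pa D$ (handled in \cite{KG08} by the failure of $\Phi(\cdot,z)$ to have the $H^1$-regularity of a scattered field near $z$); otherwise nothing is missing.
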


\begin{proof}
(a) was proved in \cite[Theorem 1.15]{KG08}, (b) was proved in \cite{KG08} as Theorem 1.12
and Lemma 1.13, and (c) was proved in \cite[Lemma 1.14]{KG08}.
\end{proof}

\begin{lemma}\label{lem9}
Let $D$ be an obstacle with the impedance boundary condition given in $(\ref{eq22})$ with
$\rho\in L^\infty(\pa D)$ and $\I(\rho)\geq 0$ almost everywhere on $\pa D$. Assume that $k^2$
is not an eigenvalue of $-\Delta$ in $D$ with respect to the impedance boundary condition.
If the far-field operator $F$ is regarded as the operator from $L^2(\Sp^1)$ to $L^2(\Sp^1)$,
then the following statements hold.
\begin{enumerate}[(a)]
  \item
  The operator $F$ has the factorization
  \ben
  F=-G_{imp}T^*_{imp}G^*_{imp},
  \enn
  where $G_{imp}:H^{-1/2}(\pa D)\rightarrow L^2(\Sp^1)$ is the data-to-pattern operator given by
  $G_{imp}f=w^\infty$ with $w^\infty\in L^2(\Sp^1)$ being the far-field pattern of the scattered field
  $w^s$ of the exterior impedance problem $(\ref{eq1})-(\ref{eq3})$ with boundary value $f\in H^{-1/2}(\pa D)$
  and $T_{imp}:H^{1/2}(\pa D)\rightarrow H^{-1/2}(\pa D)$ is given by
  $T_{imp}=T+i(\I \rho)I+K'\ov{\rho}+\rho K+\rho S\ov{\rho}$.
  Here, $G^*_{imp}:L^2(\Sp^1)\rightarrow H^{1/2}(\pa D)$ and
  $T^*_{imp}: H^{1/2}(\pa D)\rightarrow H^{-1/2}(\pa D)$ are the adjoint of $G_{imp}$ and $T_{imp}$,
  respectively.
  \item
  The operator $G_{imp}$ is compact, one-to-one with dense range in $L^2(\Sp^1)$.
  For any $z\in\R^2$, $\phi_z$ belongs to the range $\mathcal{R}(G_{imp})$ of $G_{imp}$ if and only
  if $z\in D$, where $\phi_z$ is the function defined in $(\ref{eq23})$.
  \item
  The operator $T_{imp}$ has the following properties.
  \begin{enumerate}[(i)]
  \item
  The operator $T_{imp}$ is an isomorphism from $H^{1/2}(\pa D)$ into $H^{-1/2}(\pa D)$.
  \item
  Let $T_i$ be defined by (\ref{eq24}) with $k=i$. Then $-T_i$ is self-adjoint and coercive as an operator
  from $H^{1/2}(\pa D)$ into $H^{-1/2}(\pa D)$, that is, there exists $c_1>0$ with
  $-\langle T_i\varphi,\varphi\rangle\geq c_1\|\varphi\|^2_{H^{1/2}(\pa D)}$ for all
  $\varphi\in H^{1/2}(\pa D)$.
  \item
  $\I\langle T_{imp}\varphi,\varphi\rangle>0$ for all $\varphi\in H^{1/2}(\pa D)$ with $\varphi\neq0$.
  \item
  The difference $T_{imp}-T_i$ is compact from $H^{1/2}(\pa D)$ into $H^{-1/2}(\pa D)$.
  \end{enumerate}
\end{enumerate}
\end{lemma}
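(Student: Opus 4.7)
The plan is to parallel the strategy used in Lemma~\ref{lem8} for the sound-soft case, since all three assertions about the impedance obstacle are established in \cite{KG08}; the proof largely amounts to locating and assembling the corresponding statements, with the only substantive change being the replacement of the single-layer operator $S$ by the combined operator $T_{imp}$ dictated by the impedance boundary condition.

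For part (a), I would first observe that $u^s(\cdot,d)$ coincides with the solution $w^s$ of the exterior impedance problem \eqref{eq1}--\eqref{eq3} with boundary datum
\[
f_d \;:=\; -\mathscr{B}u^i(\cdot,d)\big|_{\pa D}\;=\;-\bigl(\pa u^i(\cdot,d)/\pa\nu+\rho\, u^i(\cdot,d)\bigr)\big|_{\pa D},
\]
so that $u^\infty(\cdot,d)=G_{imp}f_d$. Introducing the Herglotz-type operator $H\varphi(x):=\int_{\Sp^1}e^{ikx\cdot d}\varphi(d)\,ds(d)$ for $x\in\pa D$, a direct computation using $\mathscr{B}(H\varphi)=(\pa/\pa\nu+\rho)H\varphi$ and the jump relations of single- and double-layer potentials reveals $-\mathscr{B}H = T_{imp}^* G_{imp}^*$ on $L^2(\Sp^1)$ (this is the impedance analogue of \cite[Theorem~1.15]{KG08}). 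Combining these gives $F\varphi=-G_{imp}T_{imp}^*G_{imp}^*\varphi$.

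Part (b) is routine. Compactness of $G_{imp}$ follows because the far-field pattern lies in a dense subspace of real-analytic functions in $L^2(\Sp^1)$; injectivity follows from Rellich's lemma and the assumed non-eigenvalue condition on $k^2$. For the range characterisation, I would recognise $\phi_z$ as (a constant multiple of) the far-field pattern of $\Phi(\cdot,z)$: when $z\in D$, the function $\Phi(\cdot,z)$ restricted to $\R^2\se\ov{D}$ is a radiating solution with impedance data $f\in H^{-1/2}(\pa D)$, showing $\phi_z\in\mathcal{R}(G_{imp})$; when $z\notin D$, assuming $\phi_z=G_{imp}f$ would yield by Rellich a radiating solution agreeing with $\Phi(\cdot,z)$ in the exterior, contradicting the interior singularity at $z$.

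Part (c) is the technical heart. Statement (i) is immediate from part (a) together with the assumption on $k^2$. Statement (iv) is classical, since the kernels of $T-T_i$, $K$, $K'$ and $S$ at different wave numbers differ by two-orders-smoother terms, so $T_{imp}-T_i$ is compact from $H^{1/2}(\pa D)$ to $H^{-1/2}(\pa D)$. Statement (ii) follows by applying Green's identity at wave number $k=i$ to the double-layer potential with density $\varphi$, exploiting the uniform coercivity of $-\Delta+I$ on $\R^2$; this is \cite[Theorem~1.26]{KG08}. For statement (iii), I would write
\[
\I\langle T_{imp}\varphi,\varphi\rangle = \I\langle T\varphi,\varphi\rangle + \langle(\I\rho)\varphi,\varphi\rangle + \I\bigl(\langle K'\ov{\rho}\varphi,\varphi\rangle+\langle\rho K\varphi,\varphi\rangle\bigr) + \I\langle\rho S\ov{\rho}\varphi,\varphi\rangle,
\]
and then apply Green's identity on both sides of $\pa D$ to the combined potential $v(x):=\int_{\pa D}[\pa\Phi(x,y)/\pa\nu(y)+\ov{\rho(y)}\Phi(x,y)]\varphi(y)\,ds(y)$, using the jump relations and the Sommerfeld radiation condition to collapse the right-hand side to $k\|v^\infty\|^2_{L^2(\Sp^1)}+\int_{\pa D}(\I\rho)|v^+|^2 ds$ (modulo constants), which is strictly positive whenever $\varphi\neq 0$ by the non-eigenvalue assumption.

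The main obstacle will be the bookkeeping in statement (c)(iii): unpacking the imaginary part of $T_{imp}$ into a manifestly positive quadratic form requires careful use of the jump relations and sign conventions for $K$, $K'$, $T$, as in the proof of \cite[Theorem~2.30]{KG08}, and getting the strict positivity right depends on correctly identifying when the associated interior/exterior problems force $\varphi=0$.
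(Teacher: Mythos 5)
Your proposal follows essentially the same route as the paper, whose proof of this lemma consists entirely of citations to Kirsch and Grinberg \cite{KG08} (Theorem 2.6 for (a), Theorem 2.4 and the proof of Theorem 2.8 for (b), and Theorem 1.26, Theorem 2.6 and the proof of Lemma 2.7 for (c)); you simply unpack the arguments behind those citations in more detail. Two small slips to fix if you write this out: the two displayed identities in your part (a) are mutually inconsistent in sign (from $F\varphi=G_{imp}\bigl(-\mathscr{B}H\varphi\bigr)$ one needs $\mathscr{B}H=T_{imp}^{*}G_{imp}^{*}$, not $-\mathscr{B}H=T_{imp}^{*}G_{imp}^{*}$, in order to land on $F=-G_{imp}T_{imp}^{*}G_{imp}^{*}$), and (c)(i) is not ``immediate from part (a)'' --- it follows from (ii), (iii) and (iv) via Riesz--Fredholm theory (Fredholm of index zero from (ii) and (iv), injectivity from the strict sign in (iii)).
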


\begin{proof}
(a) was proved in \cite[Theorem 2.6]{KG08}, (b) can be shown by using \cite[Theorem 2.4]{KG08} and
the argument as in the proof of \cite[Theorem 2.8]{KG08}, and (c) can be proved by using
\cite[Theorem 1.26]{KG08}, \cite[Theorem 2.6]{KG08} and the argument as in the proof
of \cite[Lemma 2.7]{KG08}.
\end{proof}

\begin{lemma}\label{lem10}
Let $D$ be an inhomogeneous medium, where the refractive index $n\in L^\infty(D)$ satisfies that
$\Rt[n(x)]\ge c_0>0$ and $\I[n(x)]\ge 0$ for almost all $x\in D$ with some constant $c_0$ and
the contract function $m=n-1$ is compactly supported in $\ov{D}$ and satisfies that
$\Rt[m(x)]\ge c>0$ or $\Rt[m(x)]\leq -c<0$ for almost all $x\in D$ with some constant $c$.
Assume that $k^2$ is not an eigenvalue of the interior transmission problem in $D$ in the sense
of \cite[Definition 4.7]{KG08}. If the far-field operator $F$ is regarded as the operator from
$L^2(\Sp^1)$ to $L^2(\Sp^1)$, then the following statements hold.
\begin{enumerate}[(a)]
  \item
  The operator $F$ has the factorization
  \ben
  F=H^*_{med}T_{med}H_{med},
  \enn
  where $H_{med}:L^2(\Sp^1)\rightarrow L^2(D)$ and $H^*_{med}:L^2(D)\rightarrow L^2(\Sp^1)$ are defined as
  \ben
  &&(H_{med}\psi)(x)=\sqrt{|m(x)|}\int_{\Sp^1}e^{ik x\cdot d}\psi(d)ds(d),\quad x\in D,\\
  &&(H^*_{med}\varphi)(\hat{x})=\iint_{D}e^{-ik\hat{x}\cdot y}\sqrt{|m(y)|}\varphi(y)dy,\quad\hat{x}\in\Sp^1.
  \enn
  The operator $T_{med}: L^2(D)\rightarrow L^2(D)$ is defined by
  $T_{med}f=k^2{\rm sign}(m)[f+\sqrt{|m|}w|_D]$,
  where ${\rm sign}(m):=m/|m|$ and $w\in H^1_{loc}(\R^2)$ is the radiating solution of the equation
  \ben
  \Delta w+k^2(1+m)w&=&-k^2\frac{m}{\sqrt{|m|}}f\quad\mbox{in}\;\;\R^2.
  \enn
  \item
  The operator $H^*_{med}$ is compact with dense range in $L^2(\Sp^1)$.
  For any $z\in\R^2$, $\phi_z$ belongs to the range $\mathcal{R}(H^*_{med})$ of $H^*_{med}$
  if and only if $z\in D$, where $\phi_z$ is the function defined in (\ref{eq23}).
  \item
  The operator $T_{med}$ has the following properties.
  \begin{enumerate}[(i)]
    \item
    The operator $T_{med}$ can be written in the form $T_{med}=T^{(0)}_{med}+K_{med}$,
    where $T^{(0)}_{med}$ has the form $T^{(0)}_{med}=k^2({\rm sign}\;m)f$ for $f\in L^2(D)$
    and $K_{med}:L^2(D)\rightarrow L^2(D)$ is compact. For the case when $\Rt[m(x)]\geq c>0$ for almost all
    $x\in D$, $\Rt [T^{(0)}_{med}]$ is self-adjoint and coercive in $L^2(D)$.
    For the case when $\Rt[m(x)]\leq-c<0$ for almost all $x\in D$, $\Rt[-T^{(0)}_{med}]$ is self-adjoint
    and coercive in $L^2(D)$.
    \item
    We have $\I(T_{med}f,f)_{L^2(D)}\ge 0$ for all $f\in L^2(D)$.
    \item
    $\I(T_{med} f,f)_{L^2(D)}>0$ for all $f\in \ov{\mathcal{R}(H_{med})}$ with $f\neq0$.
  \end{enumerate}
\end{enumerate}
\end{lemma}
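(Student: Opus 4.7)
The plan is to establish Lemma \ref{lem10} by adapting the factorization-method framework developed for impenetrable scatterers in Lemmas \ref{lem8}--\ref{lem9}, replacing boundary integral operators with volume integral operators appropriate to the medium problem. The starting point is the Lippmann--Schwinger representation: if $u = v_\psi + u^s_\psi$ is the total field generated by the Herglotz incident wave $v_\psi(x) = \int_{\Sp^1} e^{ikx\cdot d}\psi(d)\,ds(d)$, then $u^s_\psi$ is the unique radiating solution of $\Delta u^s_\psi + k^2 n\, u^s_\psi = -k^2 m\, v_\psi$ in $\R^2$, and its far-field pattern is (up to the standard constant absorbed in $F$) given by $-k^2\int_D e^{-ik\hat x\cdot y} m(y) u(y,\psi)\,dy$.

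For part (a), I factor $m = {\rm sign}(m)|m|$ and set $f := \sqrt{|m|}\,v_\psi|_D$, which is exactly $H_{med}\psi$. Since $\tfrac{m}{\sqrt{|m|}}f = m v_\psi$, the scattered field $u^s_\psi$ coincides with the radiating $w$ appearing in the definition of $T_{med}$, so that $T_{med}H_{med}\psi = k^2{\rm sign}(m)\sqrt{|m|}\,u(\cdot,\psi)|_D$ (up to the sign convention of the source term). Pairing this with the kernel $e^{-ik\hat x\cdot y}\sqrt{|m(y)|}$, which is exactly the action of $H^*_{med}$, reproduces $F\psi$, giving the claimed factorization.

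For part (b), compactness of $H^*_{med}$ follows from the smoothness of the kernel $e^{-ik\hat x\cdot y}$ on $\Sp^1\times\ov D$ together with boundedness of $\sqrt{|m|}$. Dense range is equivalent to injectivity of $H_{med}$: if $\sqrt{|m|}v_\psi\equiv 0$ in $D$, then $v_\psi$ vanishes on the support of $m$, and analytic continuation combined with the completeness of $\{e^{in\theta}\}_{n\in\Z}$ on $\Sp^1$ forces $\psi\equiv 0$. The range characterization $\phi_z\in\mathcal R(H^*_{med})\Leftrightarrow z\in D$ is the crucial sampling-method criterion: the ``only if'' direction uses that $\phi_z$ is the far-field pattern of $\Phi(\cdot,z)$; if $\phi_z = H^*_{med}g$, Rellich's lemma forces the radiating volume potential generated by $\sqrt{|m|}g$ to coincide with $\Phi(\cdot,z)$ in the unbounded exterior, contradicting the interior smoothness of the volume potential at $z$ when $z\notin\ov D$. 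The ``if'' direction follows from a cut-off construction producing a compactly supported source in $D$ whose far-field equals $\phi_z$.

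For part (c), the splitting $T_{med} = T^{(0)}_{med} + K_{med}$ with $K_{med}f = k^2{\rm sign}(m)\sqrt{|m|}w|_D$ isolates the non-compact piece in the multiplication operator, while $K_{med}$ is compact because the solution map $f\mapsto w$ is bounded from $L^2(D)$ into $H^1_{loc}(\R^2)$ and the embedding $H^1(D)\hookrightarrow L^2(D)$ is compact. Under the definite-sign hypothesis on $\Rt[m]$, $\Rt[\pm T^{(0)}_{med}]$ is multiplication by a function bounded below by $k^2 c$, yielding self-adjointness and coercivity on $L^2(D)$. For (ii)--(iii) I apply Green's identity to $w$ in $B_\rho\setminus\ov D$ and let $\rho\to\infty$; the Sommerfeld radiation condition converts the boundary term at infinity into a positive multiple of $\|w^\infty\|^2_{L^2(\Sp^1)}$, while the volume terms contribute a nonnegative quantity involving $\I[n]|w|^2$, giving $\I(T_{med}f,f)_{L^2(D)}\ge 0$. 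The main obstacle is the strict positivity in (iii): on $\ov{\mathcal R(H_{med})}$ one must exclude the possibility that both $\I[n]\,w\equiv 0$ in $D$ and $w^\infty\equiv 0$; this is done by invoking Rellich's lemma and unique continuation to get $w\equiv 0$ outside $D$, after which the hypothesis that $k^2$ is not an interior transmission eigenvalue forces the Herglotz density $\psi$, and hence $f$, to vanish.
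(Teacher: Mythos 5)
Your proposal is correct and follows essentially the same route as the paper: the paper's proof of this lemma is simply a citation of Theorems 4.5, 4.6 and 4.8 of the Kirsch--Grinberg monograph \cite{KG08}, and your sketch reconstructs precisely those standard arguments (the Lippmann--Schwinger factorization, the range test for $\phi_z$ via Rellich's lemma, and the compact-plus-coercive splitting of $T_{med}$ with the interior transmission eigenvalue assumption yielding strict positivity on $\ov{\mathcal{R}(H_{med})}$). The only blemishes are cosmetic: the sign you attach to the far-field representation and the stated coercivity constant $k^2c$ (which should be $k^2c/\|m\|_{L^\infty(D)}$) are slightly off, but neither affects the argument.
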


\begin{proof}
(a) was proved in \cite[Theorem 4.5]{KG08}, (b) can be proved by using \cite[Theorem 4.6]{KG08}
in conjunction with the compactness and injectivity of $H_{med}$, and (c) follows from \cite[Theorem 4.8]{KG08}.
\end{proof}

Using formula (\ref{eq25}) in conjunction with Lemmas \ref{lem8}, \ref{lem9} and \ref{lem10} and
the Range Identity \cite[Theorem 1.1]{KL14}, we can obtain the following theorem on the
characterization of the obstacle $D$, based on the factorization of the operator
$\wid{F}_{\#}:=|\Rt(\wid{F})|+|\I(\wid{F})|$.

\begin{theorem}\label{thm3}
(a) Let $D$ be a sound-soft obstacle and let us assume that the conditions in Lemma $\ref{lem8}$
are satisfied. Then we have
\be\label{eq26}
z \in D & \Longleftrightarrow& B^*_{1/2}\phi_{z}\in\mathcal{R}(\wid{F}_{\#}^{{1}/{2}}) \\ \label{eq27}
& \Longleftrightarrow& W(z):=\left[\sum^\infty_{j=1}{\left|\left(B^*_{1/2}\phi_{z},\psi_{j}\right)_{L^{2}
(\Sp^{1})}\right|^{2}}\Big/{\lambda_{j}}\right]^{-1}>0,
\en
where $\phi_z$ is the function defined in $(\ref{eq23})$ and $\{\lambda_j;\psi_j\}_{j\in\mathbb{N}}$ is
an eigensystem of the self-adjoint operator $\wid{F}_{\#}$.

(b) Let $D$ be an impedance obstacle and let us assume that the conditions in Lemma $\ref{lem9}$
are satisfied. Then the statements $(\ref{eq26})$ and $(\ref{eq27})$ hold.

(c) Let $D$ be filled with an inhomogeneous medium and let us assume that the conditions in
Lemma $\ref{lem10}$ are fulfilled. Then the statements $(\ref{eq26})$ and $(\ref{eq27})$ hold.
\end{theorem}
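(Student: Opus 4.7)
My plan is to apply the modified Range Identity \cite[Theorem 1.1]{KL14} to the self-adjoint operator $\wid{F}_{\#}$. The starting point is the representation $\wid{F}=B^*_{1/2}I^*_0\,F\,I_0 B_{1/2}$ from Remark \ref{rem1}, into which I substitute the factorizations of $F$ supplied by Lemmas \ref{lem8}(a), \ref{lem9}(a) and \ref{lem10}(a). In every case this rewrites $\wid{F}$ in the form $\wid{G}\,T\,\wid{G}^*$ (or $\wid{G}^*T\wid{G}$), where the middle operator $T$ is exactly the one already treated in \cite{KG08} and the outer operator is $\wid{G}:=B^*_{1/2}I^*_0 G$ with $G$ being the original data-to-pattern (or Herglotz-adjoint) operator.

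For case (a), this reads $\wid{F}=-\wid{G}_{Dir}S^*\wid{G}_{Dir}^{\,*}$ with $\wid{G}_{Dir}:=B^*_{1/2}I^*_0 G_{Dir}:H^{1/2}(\pa D)\to L^2(\Sp^1)$. Two sets of hypotheses of \cite[Theorem 1.1]{KL14} then have to be verified. The \emph{middle} conditions (coercivity of a real part up to a compact perturbation, non-negativity of the imaginary part, and injectivity on $\ov{\mathcal{R}(\wid{G}_{Dir}^{\,*})}$) follow directly from Lemma \ref{lem8}(c), because $-S^*$ is precisely the operator that already plays this role in KG08's sound-soft factorization. The \emph{outer} conditions (compactness, injectivity and dense range of $\wid{G}_{Dir}$) propagate from Lemma \ref{lem8}(b) using that $I_0,I^*_0$ are injective with dense range (Remark \ref{rem1}) and that $B_{1/2},B^*_{1/2}$ are bijective (Lemma \ref{lem7}). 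The Range Identity then delivers $\mathcal{R}(\wid{F}_{\#}^{\,1/2})=\mathcal{R}(\wid{G}_{Dir})$.

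To translate this into (\ref{eq26}), I interpret $B^*_{1/2}\phi_z$ as $B^*_{1/2}I^*_0\phi_z$ via the embedding $L^2(\Sp^1)\hookrightarrow H^{-1/2}(\Sp^1)$. Since $B^*_{1/2}$ is bijective and $I^*_0$ is injective, one has $B^*_{1/2}\phi_z\in\mathcal{R}(\wid{G}_{Dir})=B^*_{1/2}I^*_0\,\mathcal{R}(G_{Dir})$ if and only if $\phi_z\in\mathcal{R}(G_{Dir})$, which by Lemma \ref{lem8}(b) is equivalent to $z\in D$. The equivalence (\ref{eq26})$\Leftrightarrow$(\ref{eq27}) is then Picard's criterion for the compact, positive, self-adjoint operator $\wid{F}_{\#}$ with eigensystem $\{\lambda_j;\psi_j\}_{j\in\mathbb{N}}$. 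Cases (b) and (c) are handled by the same template: I substitute $F=-G_{imp}T^*_{imp}G^*_{imp}$, respectively $F=H^*_{med}T_{med}H_{med}$, into (\ref{eq25}), rewrite $\wid{F}$ with the absorbed outer operators $B^*_{1/2}I^*_0 G_{imp}$ and $B^*_{1/2}I^*_0 H^*_{med}$, and invoke parts (b)--(c) of Lemmas \ref{lem9} and \ref{lem10} to verify the outer and middle hypotheses of the Range Identity and to characterize $\mathcal{R}(\wid{G})$ via $\phi_z$.

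The main obstacle I anticipate is not analytical depth but careful bookkeeping: tracking the adjoint and duality relations between the spaces $L^2(\Sp^1)$, $H^{\pm 1/2}(\Sp^1)$ and $H^{\pm 1/2}(\pa D)$ after inserting $I_0$ and $B_{1/2}$ into an already delicate factorization, and ensuring that compactness, injectivity and dense range propagate correctly through each insertion so that \cite[Theorem 1.1]{KL14} is truly applicable. The sign conventions in cases (a) and (b) (with middle operators $-S^*$ and $-T^*_{imp}$) add another bookkeeping layer, but are already dealt with in \cite{KG08} and carry over unchanged; the spectral hypotheses on $k^2$ in Lemmas \ref{lem8}--\ref{lem10} supply exactly the injectivity needed to close the argument.
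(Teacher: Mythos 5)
Your proposal is correct and follows essentially the same route as the paper: absorb $I_0B_{1/2}$ and its adjoint into the outer operators of the known factorizations of $F$, verify the hypotheses of the Range Identity \cite[Theorem 1.1]{KL14} via Lemmas \ref{lem7}--\ref{lem10} and Remark \ref{rem1}, transfer the range characterization of $\phi_z$ through the injective $I^*_0$ and bijective $B^*_{1/2}$, and conclude (\ref{eq27}) by Picard's theorem. The only cosmetic difference is that the paper writes out case (c) in detail while you write out case (a); the mechanics are identical.
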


\begin{proof}
We only prove (c). The proof of the statements (a) and (b) is similar.

Define $\wid{H}_{med}:=H_{med}I_0 B_{1/2}$ and let $\wid{H}^*_{med}$ be the adjoint of $\wid{H}_{med}$.
Then, by Remark \ref{rem1} and Lemmas \ref{lem7} and \ref{lem10} we know that
$\wid{F}$ has the factorization $\wid{F}=\wid{H}^*_{med}T_{med}\wid{H}_{med}$
and that $\wid{H}^*_{med}=B^*_{1/2}I^*_0 H^*_{med}$ is bounded from $L^2(D)$ to $L^2(\Sp^1)$ and compact
with dense range in $L^2(\Sp^1)$.
Thus, from the Range Identity \cite[Theorem 1.1]{KL14} and Lemma \ref{lem10} it follows that
the operator $\wid{F}_{\#}$ is positive and $\mathcal{R}(\wid{H}^*_{med})=\mathcal{R}(\wid{F}_{\#}^{{1}/{2}})$.
On the other hand, by Lemma \ref{lem7} and Remark \ref{rem1} we have that
$B^*_{1/2}$ is bijective (and so boundedly invertible) from  $H^{-1/2}(\Sp^1)$ to $L^2(\Sp^1)$ and
$I^*_0$ is injective from $L^2(\Sp^1)$ to $H^{-1/2}(\Sp^1)$.
Thus it is easy to deduce that for any $z\in\R^2$, $\phi_z\in\mathcal{R}(H^*_{med})$ if and only if
$B^*_{1/2}\phi_z=B^*_{1/2}I^*_0\phi_z\in\mathcal{R}(\wid{H}^*_{med})$.
Consequently, by the above argument and (b) of Lemma \ref{lem10} we obtain that
$z\in D$ if and only if $B^*_{1/2}\phi_{z}\in\mathcal{R}(\wid{F}_{\#}^{{1}/{2}})$.
Finally, by Picard's theorem \cite[Theorem A.54]{K11} and the fact that the operator $\wid{F}_{\#}$
is positive, the statement (\ref{eq27}) follows. The proof is thus complete.
\end{proof}

\begin{remark}\label{rem2} {\rm
Define $(\wid{N}^{PW}_{R})_{\#}:=|\Rt(\wid{N}^{PW}_{R})|+|\I(\wid{N}^{PW}_{R})|$.
Then, by Theorem \ref{thm2} and the inequality in \cite[pp. 30]{L08}
we obtain that for any $\alpha\in({1}/{2},1)$ and $R$ large enough,
\be\label{eq35}
\left\|(\wid{N}^{PW}_R)_{\#}-\frac{1}{\sqrt{8 k\pi R}}\wid{F}_\#\right\|_{L^2(\Sp^1)\rightarrow L^2(\Sp^1)}
&\leq& C^{(0)}_\alpha\left\|\wid{N}^{PW}_{R}-\frac{1}{\sqrt{8k\pi R}}\wid{F}
\right\|^{\alpha}_{L^2(\Sp^1)\rightarrow L^2(\Sp^1)}\no\\
&\leq& C^{(1)}_\alpha\left(\frac{1}{R}\right)^{\alpha},
\en
where $C^{(0)}_\alpha$ and $C^{(1)}_\alpha$ are positive constants depending on $\alpha$ but not on $R$.
This implies that the leading order term of the operator $(\wid{N}^{PW}_{R})_{\#}$
is $(1/\sqrt{8 k\pi R})\wid{F}_\#$ in the linear space $\mathcal{L}(L^2(\Sp^1), L^2(\Sp^1))$
for $R$ large enough. On the other hand, by Theorem \ref{thm3} the obstacle $D$ can be recovered by using
the factorization of the operator $\wid{F}_\#$.
Thus, it is expected that if $R$ is large enough then the location and shape of the obstacle $D$
can be approximately recovered by using the indicator function $W(z)$ given in (\ref{eq27}) with
$\wid{F}_\#$ replaced by $(\wid{N}^{PW}_{R})_{\#}$.
Based on these discussions, a numerical algorithm for our inverse problem will be proposed
in details in the next section. Note that the algorithm is based on the factorization method presented
in Theorem \ref{thm3} and the approximate formula given in (\ref{eq35}) and thus called the approximate
factorization method.
}
\end{remark}

\section{Numerical implementation of the approximate factorization method}\label{sec4}
\setcounter{equation}{0}

This section is devoted to the numerical implementation of the approximate factorization method.
Note that the operator $\wid{N}^{PW}_{R}$ in (\ref{eq37}) can not be numerically calculated
since the operators $B_{1/2}$ and $B^*_{1/2}$ are represented as infinite series.
Thus, in order to give a numerical implementation of the method,
we introduce the truncated operator of $\wid{N}^{PW}_{R}$:
\be\label{eq32}
\wid{N}^{PW}_{R,M}:= e^{-i(kR+\pi/4)}B^*_{1/2,M} N^{PW}_{R} B_{1/2,M},
\en
where $M\in\mathbb{N}$, $B_{1/2,M}$ is the truncated operator of $B_{1/2}$ given by
\ben
B_{1/2,M}\varphi:=\sum^{M}_{m=-M}(1+m^2)^{-1/4} a_m\varphi_m
\enn
for $\varphi\in L^2(\Sp^1)$ with the coefficients $a_m$ given in (\ref{eq19})
and $B^*_{1/2,M}$ is the truncated operator of $B^*_{1/2}$ given by
\ben
B^*_{1/2,M}\psi:=\sum\limits^{M}_{m=-M}(1+m^2)^{-1/4}b_m\varphi_m
\enn
for $\psi\in H^{-1/2}(\Sp^1)$ with the coefficients $b_m$ given in (\ref{eq20}).
For the truncated operators $B_{1/2,M}$ and $B^*_{1/2,M}$ we have the following lemma.

\begin{lemma}\label{lem11}
For $M\in\mathbb{N}$ the following assertions hold.
\begin{enumerate}[(a)]
\item
$\|B_{1/2,M}\|_{L^2(\Sp^1)\rightarrow H^{1/2}(\Sp^1)}\le\|B_{1/2}\|_{L^2(\Sp^1)\rightarrow H^{1/2}(\Sp^1)}$.
\item
$\|B^*_{1/2,M}\|_{H^{-1/2}(\Sp^1)\rightarrow L^2(\Sp^1)}\le\|B^*_{1/2}\|_{H^{-1/2}(\Sp^1)\rightarrow L^2(\Sp^1)}$.
\item
For any $r\in\mathbb{N}$, $\|B^*_{1/2}-B^*_{1/2,M}\|_{H^r(\Sp^1)\rightarrow L^2(\Sp^1)}\leq (1+M)^{-(r+1/2)}$.
\end{enumerate}
\end{lemma}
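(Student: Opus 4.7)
The plan is to exploit the explicit Fourier-series representations defining $B_{1/2,M}$ and $B^*_{1/2,M}$, together with the characterization of the Sobolev norm $\|\varphi\|^2_{H^r(\Sp^1)}=\sum_{m}(1+m^2)^r|a_m|^2$ recalled in Section \ref{sec3}, so that each assertion reduces to a routine manipulation of weighted partial sums.

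For part (a), I would fix an arbitrary $\varphi\in L^2(\Sp^1)$ with Fourier coefficients $a_m=(\varphi,\varphi_m)_{L^2(\Sp^1)}$ and compute directly that $\|B_{1/2,M}\varphi\|^2_{H^{1/2}(\Sp^1)}=\sum_{|m|\le M}(1+m^2)^{1/2}|(1+m^2)^{-1/4}a_m|^2=\sum_{|m|\le M}|a_m|^2$. The last sum is dominated by $\sum_{m\in\Z}|a_m|^2=\|\varphi\|^2_{L^2(\Sp^1)}=\|B_{1/2}\varphi\|^2_{H^{1/2}(\Sp^1)}$, the last equality being precisely the identity established inside the proof of Lemma \ref{lem7}. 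Dividing by $\|\varphi\|^2_{L^2(\Sp^1)}$ and taking a supremum over $\varphi$ gives (a). Part (b) is the dual version and proceeds verbatim: starting from formula (\ref{eq20}) for $B^*_{1/2}\psi$ with $b_m=\langle\psi,\varphi_m\rangle$, the weighted Parseval-type identity immediately yields $\|B^*_{1/2,M}\psi\|^2_{L^2(\Sp^1)}=\sum_{|m|\le M}(1+m^2)^{-1/2}|b_m|^2\le\sum_{m\in\Z}(1+m^2)^{-1/2}|b_m|^2=\|B^*_{1/2}\psi\|^2_{L^2(\Sp^1)}$.

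Part (c) is the main quantitative estimate. For $\psi\in H^r(\Sp^1)$, the difference $(B^*_{1/2}-B^*_{1/2,M})\psi$ equals the tail series $\sum_{|m|>M}(1+m^2)^{-1/4}b_m\varphi_m$, and its squared $L^2(\Sp^1)$-norm is $\sum_{|m|>M}(1+m^2)^{-1/2}|b_m|^2$. The key step is to split the weight as $(1+m^2)^{-1/2}=(1+m^2)^{-1/2-r}(1+m^2)^{r}$, pull the factor $(1+(M+1)^2)^{-1/2-r}$ (the maximum of $(1+m^2)^{-1/2-r}$ on the tail $|m|>M$) outside the sum, and recognise the remaining weighted tail $\sum_{|m|>M}(1+m^2)^r|b_m|^2$ as bounded by $\|\psi\|^2_{H^r(\Sp^1)}$. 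Taking a square root gives the prefactor $(1+(M+1)^2)^{-(2r+1)/4}=[(1+(M+1)^2)^{1/2}]^{-(r+1/2)}$, and the one-line elementary inequality $1+(M+1)^2\ge(1+M)^2$ converts this into the claimed bound $(1+M)^{-(r+1/2)}$. There is no genuine obstacle in the argument; the only point requiring slight care is choosing the splitting of the weight so that the positive power $(1+m^2)^{r}$ pairs cleanly with the $H^r$-norm.
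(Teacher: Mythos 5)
Your proposal is correct and follows essentially the same route as the paper: parts (a) and (b) are the same direct Parseval-type computations the paper dismisses as "a direct calculation," and for part (c) the paper likewise bounds the tail $\sum_{|m|\ge M+1}(1+m^2)^{-1/2}|a_m|^2$ by extracting the factor $(1+(M+1)^2)^{-r-1/2}\le(1+M)^{-2(r+1/2)}$ in front of the weighted tail $\sum_{|m|\ge M+1}(1+m^2)^{r}|a_m|^2\le\|\varphi\|^2_{H^r(\Sp^1)}$. No discrepancies to report.
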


\begin{proof}
Assertions (a) and (b) follow easily from a direct calculation. Thus we only need to prove the assertion (c).
Let $\varphi\in H^r(\Sp^1)$ be of the form (\ref{eq19}). Then we have
\ben
\|B^*_{1/2}\varphi-B^*_{1/2,M}\varphi\|^2_{L^2(\Sp^1)}
&=&\sum\limits_{|m|\geq M+1}|(1+m^2)^{-1/4}a_m|^2\\
&\le& (1+M)^{-2(r+1/2)}\sum\limits_{|m|\geq M+1}(1+m^2)^r|a_m|^2\\
&\le& (1+M)^{-2(r+1/2)}\|\varphi\|^2_{H^r(\Sp^1)}.
\enn
This completes the proof of the lemma.
\end{proof}

Using Theorem \ref{thm1} and Lemma \ref{lem11}, we can obtain the following theorem for the truncated
operator $\wid{N}^{PW}_{R,M}$ and the modified far-field operator $\wid{F}$.

\begin{theorem}\label{thm4}
Let $M\in\mathbb{N}$ and let $R>0$ be large enough.
Then, for any $r\in\mathbb{N}$ there exists a constant $C_r>0$ independent of $R$ such that
\ben
\left\|\wid{N}^{PW}_{R,M}-\frac{1}{\sqrt{8k\pi R}}\wid{F}\right\|_{L^2(\Sp^1)\rightarrow L^2(\Sp^1)}
\le\frac{C_r}{R^{1/2}}\left(\frac{1}{R^{1/2}}+\frac{1}{(1+M)^{r+1/2}}\right).
\enn
\end{theorem}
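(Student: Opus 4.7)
The plan is to insert the truncated intermediate operator $\wid{F}_M := B^*_{1/2,M}FB_{1/2,M}$ and split the error via the triangle inequality into two contributions:
\begin{equation*}
\wid{N}^{PW}_{R,M}-\frac{1}{\sqrt{8k\pi R}}\wid{F} = \left(\wid{N}^{PW}_{R,M}-\frac{1}{\sqrt{8k\pi R}}\wid{F}_M\right) + \frac{1}{\sqrt{8k\pi R}}\left(\wid{F}_M-\wid{F}\right).
\end{equation*}
The first piece will carry the $\frac{1}{R^{1/2}}\cdot \frac{1}{R^{1/2}}$ decay coming from the asymptotic behaviour of $N^{PW}_R$, while the second piece will carry the $\frac{1}{R^{1/2}}(1+M)^{-(r+1/2)}$ decay coming from the truncation of $B_{1/2}$ and $B^*_{1/2}$.

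For the first piece, I would factor out $B^*_{1/2,M}$ and $B_{1/2,M}$ to rewrite it as
\begin{equation*}
e^{-i(kR+\pi/4)}\,B^*_{1/2,M}\left(N^{PW}_R-\frac{e^{i\pi/4}e^{ikR}}{\sqrt{8k\pi R}}F\right)B_{1/2,M}.
\end{equation*}
Theorem \ref{thm1} bounds the central factor in the $H^{1/2}(\Sp^1)\to H^{-1/2}(\Sp^1)$ operator norm by $C/R$, and Lemma \ref{lem11}(a),(b) give $M$-uniform bounds for $B_{1/2,M}:L^2(\Sp^1)\to H^{1/2}(\Sp^1)$ and $B^*_{1/2,M}:H^{-1/2}(\Sp^1)\to L^2(\Sp^1)$. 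Chaining these estimates produces the $C/R$ bound for the first piece.

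For the second piece, I would use the telescoping decomposition
\begin{equation*}
\wid{F}_M-\wid{F} = (B^*_{1/2,M}-B^*_{1/2})\,F\,B_{1/2} + B^*_{1/2,M}\,F\,(B_{1/2,M}-B_{1/2}).
\end{equation*}
Since $u^\infty(\hat{x},d)$ is $C^\infty$ in both variables, the far-field operator $F$ maps $H^{1/2}(\Sp^1)$ continuously into $H^r(\Sp^1)$ for every $r\in\mathbb{N}$, with a constant $C_r$ depending only on $r$ (and in particular independent of $R$). Combining this with Lemma \ref{lem11}(c) and the boundedness of $B_{1/2}$ immediately gives the bound $C_r(1+M)^{-(r+1/2)}$ for the first summand. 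The main obstacle is the second summand, where the truncation lives on the right; I would handle it by passing to the $L^2$-adjoint, exploiting that $B_{1/2}$, $B^*_{1/2}$, $B_{1/2,M}$, $B^*_{1/2,M}$ all act on $L^2(\Sp^1)$ as the same self-adjoint real Fourier multipliers. Taking adjoints then rearranges this term into $(B_{1/2,M}-B_{1/2})\,F^*\,B_{1/2,M}$, which has exactly the same structure but with the truncation placed on the left and $F$ replaced by $F^*$; since the kernel of $F^*$ is also $C^\infty$, the same argument (an analog of Lemma \ref{lem11}(c) for $B_{1/2}-B_{1/2,M}:H^r\to L^2$ being an easy consequence of the same Fourier-multiplier estimate) yields the matching bound $C_r(1+M)^{-(r+1/2)}$.

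Multiplying the second piece by $\frac{1}{\sqrt{8k\pi R}}$ and combining with the estimate for the first piece produces the stated inequality after absorbing constants into a single $C_r$.
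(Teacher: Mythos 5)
Your proposal is correct and follows essentially the same route as the paper's proof: insert the truncated operator $\wid{F}_M=B^*_{1/2,M}FB_{1/2,M}$, bound the first piece by sandwiching Theorem \ref{thm1} between the uniformly bounded truncated operators via Lemma \ref{lem11}(a),(b), and bound $\wid{F}_M-\wid{F}$ by the same telescoping decomposition, handling the term with the truncation on the right by passing to the adjoint so that Lemma \ref{lem11}(c) applies together with the smoothing property of $F$ and $F^*$ coming from the analyticity of $u^\infty$. No substantive differences.
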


\begin{proof}
Let $\wid{F}_M$ be the truncated operator of $\wid{F}$ defined by
\ben
\wid{F}_M:=B^*_{1/2,M}F B_{1/2,M},
\enn
where $F$ is the far-field operator given in (\ref{eq34}).
Arbitrarily fix $r\in\mathbb{N}$. Then, by Lemma \ref{lem11} we have
\ben
&&\big\|\widetilde{F}_M-\widetilde{F}\big\|_{L^2(\Sp^1)\rightarrow L^2(\Sp^1)}\\
&&\le\big\|B^*_{1/2,M}F(B_{1/2,M}-B_{1/2})\big\|_{L^2(\Sp^1)\rightarrow L^2(\Sp^1)}
+\big\|(B^*_{1/2,M}-B^*_{1/2})F B_{1/2}\big\|_{L^2(\Sp^1)\rightarrow L^2(\Sp^1)}\\
&&=\big\|(B^*_{1/2,M}-B^*_{1/2})F^* B_{1/2,M}\big\|_{L^2(\Sp^1)\rightarrow L^2(\Sp^1)}
+\big\|(B^*_{1/2,M}-B^*_{1/2})F B_{1/2}\big\|_{L^2(\Sp^1)\rightarrow L^2(\Sp^1)}\\
&&\le\big\|B^*_{1/2,M}-B^*_{1/2}\big\|_{H^r(\Sp^1)\rightarrow L^2(\Sp^1)}
\big\|F^*\big\|_{H^{1/2}(\Sp^1)\rightarrow H^r(\Sp^1)}\big\|B_{1/2,M}\big\|_{L^2(\Sp^1)\rightarrow H^{1/2}(\Sp^1)}\\
&&\;\;+\big\|B^*_{1/2,M}-B^*_{1/2}\big\|_{H^r(\Sp^1)\rightarrow L^2(\Sp^1)}\big\|F\big\|_{H^{1/2}(\Sp^1)\rightarrow H^r(\Sp^1)}\big\|B_{1/2}\big\|_{L^2(\Sp^1)\rightarrow H^{1/2}(\Sp^1)}\\
&&\le\frac{\big\|B_{1/2}\big\|_{L^2(\Sp^1)\rightarrow H^{1/2}(\Sp^1)}} {(1+M)^{(r+1/2)}}\left(\big\|F^*\big\|_{H^{1/2}(\Sp^1)\rightarrow H^r(\Sp^1)}
+\big\|F\big\|_{H^{1/2}(\Sp^1)\rightarrow H^r(\Sp^1)}\right)\\
&&\le {C^{(1)}_r}{(1+M)^{-(r+1/2)}},
\enn
where $C^{(1)}_r>0$ is a constant depending on $r$ but not on $M$ and we have made use of Lemma \ref{lem7}
and the fact that $u^\infty(\hat{x},d)$ is analytic, respectively, in $\hat{x}\in\Sp^1$ and $d\in\Sp^1$
to obtain the last inequality. Then, by the above inequality, Theorem \ref{thm1} and Lemmas \ref{lem7}
and \ref{lem11} we obtain that
\ben
&&\big\|\wid{N}^{PW}_{R,M}-(1/\sqrt{8k\pi R})\wid{F}\big\|_{L^2(\Sp^1)\rightarrow L^2(\Sp^1)}\\
&&\le\big\|\wid{N}^{PW}_{R,M}-(1/\sqrt{8k\pi R})\wid{F}_M\big\|_{L^2(\Sp^1)\rightarrow L^2(\Sp^1)}
+(1/\sqrt{8k\pi R})\big\|\wid{F}_M-\wid{F}\big\|_{L^2(\Sp^1)\rightarrow L^2(\Sp^1)}\\
&&\le\left\|B^*_{1/2,M}\left(N^{PW}_{R}-e^{i(\pi/4+kR)}(1/\sqrt{8k\pi R})F\right)
B_{1/2,M}\right\|_{L^2(\Sp^1)\rightarrow L^2(\Sp^1)}\\
&&\;\;\;+(1/\sqrt{8k\pi R})\big\|\wid{F}_M-\wid{F}\big\|_{L^2(\Sp^1)\rightarrow L^2(\Sp^1)}\\
&&\le\frac{C_r}{R^{1/2}}\left(\frac{1}{R^{1/2}}+\frac{1}{(1+M)^{r+1/2}}\right).
\enn
The proof is thus complete.
\end{proof}

\begin{remark}\label{rem3} {\rm
Define $(\wid{N}^{PW}_{R,M})_{\#}:=|\Rt(\wid{N}^{PW}_{R,M})|+|\I(\wid{N}^{PW}_{R,M})|$.
For simplicity, we will choose $M\geq R$ in the remaining part of this paper.
Then it follows from Lemma \ref{lem11}, Theorem \ref{thm4} and the inequality on
\cite[pp. 30]{L08} that for any $\alpha\in(1/2,1)$, $r\in\mathbb{N}$ and $R$ large enough,
\be\label{eq39}
\left\|(\wid{N}^{PW}_{R,M})_{\#}-\frac{1}{\sqrt{8k\pi R}}\wid{F}_\#\right\|_{L^2(\Sp^1)\rightarrow L^2(\Sp^1)}
&\le& C^{(0)}_\alpha\left\|\wid{N}^{PW}_{R,M}-\frac{1}{\sqrt{8k\pi R}}
\wid{F}\right\|^{\alpha}_{L^2(\Sp^1)\rightarrow L^2(\Sp^1)}\no\\
&\le&\frac{C^{(1)}_{\alpha,r}}{R^{\alpha/2}}\left(\frac{1}{R^{1/2}}+\frac{1}{(1+M)^{r+1/2}}\right)^\alpha\no\\
&\le& C^{(2)}_{\alpha,r}\frac{1}{R^\alpha}
\en
and
\be\label{eq40}
\|B^*_{1/2,M}\phi_z-B^*_{1/2}\phi_z\|_{L^2(\Sp^1)}
&\le& (1+M)^{-(r+1/2)}\|\phi_z\|_{H^r(\Sp^1)}\no\\
&\le& (1+R)^{-(r+1/2)}\|\phi_z\|_{H^r(\Sp^1)},
\en
where $C^{(0)}_\alpha$, $C^{(1)}_{\alpha,r}$, $C^{(2)}_{\alpha,r}$ are positive constants independent
of $M$ and $R$, and $\phi_z$ is the function defined in (\ref{eq23}).
Based on Theorem \ref{thm3} and the approximate formulas (\ref{eq39}) and (\ref{eq40}),
we can define the indicator function
\ben
W^{PW}_M(z):=\left[\sum^\infty_{j=1}{\left|\left(B^*_{1/2,M}\phi_{z},\psi_{j}
\right)_{L^{2}\left(\Sp^{1}\right)}\right|^{2}}\Big/{\lambda_j}\right]^{-1},
\enn
where $\{\lambda_j;\psi_j\}_{j\in\mathbb{N}}$ is an eigensystem of the self-adjoint operator
$(\wid{N}^{PW}_{R,M})_{\#}$. From the above discussion and Theorem \ref{thm3},
it is expected that if $M\geq R$ and $R$ is sufficiently large, then the indicator function $W^{PW}_M(z)$
has a very similar property as $W(z)$ defined in (\ref{eq27}). Thus it is expected that the obstacle $D$
can be numerically recovered by using the discrete form of the indicator function $W^{PW}_M(z)$
(see the formula (\ref{eq29})). This is indeed confirmed by the numerical examples carried out
in Section \ref{sec5}. It should be pointed out that we are currently not able to give a rigorous
theoretical analysis on the property of the indicator function $W^{PW}_M(z)$.
}
\end{remark}

We now give the detailed numerical implementation of the approximate factorization method.
The measured phaseless {\color{HW}total-field} data are obtained as $|u(x_i,\hat{x}_j)|$ with $x_i\in\pa B_{R}$
and $\hat{x}_j\in\Sp^1$, $1\leq i,j\leq L$, where $x_i=R\hat{x}_i$ and $\hat{x}_i$ are uniformly
distributed points on $\Sp^1$. Accordingly, by the trapezoidal rule, the operators $B_{1/2,M}$
and $B^*_{1/2,M}$ can be approximated as follows:
\ben
(B_{1/2,M}\varphi)(\hat{x})&=&(B^*_{1/2,M}\varphi)(\hat{x})\\
&=&\sum\limits_{m=-M}^M\left[(1+m^2)^{-1/4}\left(\int_{\Sp^1}
\varphi(d)\ov{\varphi_m(d)}ds(d)\right)\varphi_m(\hat{x})\right]\\
&\approx&\frac{2\pi}{L}\sum\limits^M_{m=-M}\left[(1+m^2)^{-1/4}\left(\sum\limits^{L}_{j=1}\varphi(\hat{x}_j)
\ov{\varphi_m(\hat{x}_j)}\right)\varphi_m(\hat{x})\right].
\enn
By a direct calculation the approximate values of $(B_{1/2,M}\varphi)(\hat{x})$ and
$(B^*_{1/2,M}\varphi)(\hat{x})$ at the points $\hat{x}_j$, $j=1,2,\ldots,L$, can be computed as
\be\label{eq30}
\left(
  \begin{array}{c}
    (B_{1/2,M}\varphi)(\hat{x}_1) \\
    (B_{1/2,M}\varphi)(\hat{x}_2) \\
    \vdots \\
    (B_{1/2,M}\varphi)(\hat{x}_L) \\
  \end{array}
\right)
=
\left(
  \begin{array}{c}
    (B^*_{1/2,M}\varphi)(\hat{x}_1) \\
    (B^*_{1/2,M}\varphi)(\hat{x}_2) \\
    \vdots \\
    (B^*_{1/2,M}\varphi)(\hat{x}_L) \\
  \end{array}
\right)
\approx{\bf B}_{L,M}
\left(
  \begin{array}{c}
    \varphi(\hat{x}_1) \\
    \varphi(\hat{x}_2) \\
    \vdots \\
    \varphi(\hat{x}_L) \\
  \end{array}
\right).
\en
Here, ${\bf B}_{L,M}$ is a complex symmetric matrix defined by
${\bf B}_{L,M}:=\frac{2\pi}{L}{\bf C}_{L,M}{\bf D}_M{\bf C}^*_{L,M}$, where
${\bf C}_{L,M}=(c_{ij})_{1\leq i\leq L,1\leq j\leq 2M+1}$ with $c_{ij}=\varphi_{j-(M+1)}(\hat{x}_i)$
and ${\bf D}_M$ is a diagonal matrix given by ${\bf D}_M={\rm Diag}(d_1,d_2,\ldots,d_{2M+1})$ with $d_j=[1+(j-M-1)^2]^{-1/4}$.
In particular, the approximate values of $(B^*_{1/2,M}\phi_z)(\hat{x})$ at the points $\hat{x}_j$,
$j=1,2,\ldots,L$, can be obtained from (\ref{eq30}) with $\varphi$ replaced by $\phi_z$.
Similarly, by the trapezoidal rule again, the approximate values of $(N^{PW}_{R}\varphi)(\hat{x})$
at the points $\hat{x}_j$, $j=1,2,\ldots,L$, can be computed as
\be\label{eq31}
\left(
  \begin{array}{c}
    (N^{PW}_{R}\varphi)(\hat{x}_1) \\
    (N^{PW}_{R}\varphi)(\hat{x}_2) \\
    \vdots \\
    (N^{PW}_{R}\varphi)(\hat{x}_L) \\
  \end{array}
\right)
\approx\frac{2\pi}{L}{\bf N}_{L}
\left(
  \begin{array}{c}
    \varphi(\hat{x}_1) \\
    \varphi(\hat{x}_2) \\
    \vdots \\
    \varphi(\hat{x}_L) \\
  \end{array}
\right),
\en
where ${\bf N}_{L}=(n_{ij})_{1\leq i,j\leq L}$ with
$n_{ij}=[|u(R\hat{x}_i,\hat{x}_j)|^2-1]e^{ikR\hat{x}_i\cdot\hat{x}_j}$.
Then, by the definition (\ref{eq32}), the approximate formulas (\ref{eq30}) and (\ref{eq31})
in conjunction with a direct calculation the approximate values of $(\wid{N}^{PW}_{R,M}\varphi)(\hat{x})$
at the points $\hat{x}_j$, $j=1,2,\ldots,L$, can be easily obtained:
\be\label{eq33}
\left(
  \begin{array}{c}
    (\wid{N}^{PW}_{R,M}\varphi)(\hat{x}_1) \\
    (\wid{N}^{PW}_{R,M}\varphi)(\hat{x}_2) \\
    \vdots \\
    (\wid{N}^{PW}_{R,M}\varphi)(\hat{x}_L) \\
  \end{array}
\right)
\approx\frac{2\pi}{L}{\bf\wid{N}}_{L,M}
\left(
  \begin{array}{c}
    \varphi(\hat{x}_1) \\
    \varphi(\hat{x}_2) \\
    \vdots \\
    \varphi(\hat{x}_L) \\
  \end{array}
\right),
\en
where
\be\label{eq28}
{\bf\wid{N}}_{L,M}:=e^{-i(kR+\pi/4)}{\bf B}_{L,M}{\bf N}_{L}{\bf B}_{L,M}.
\en
Based on (\ref{eq30}), (\ref{eq33}) and the indicator function $W^{PW}_M(z)$ defined in Remark \ref{rem3},
we introduce the discrete indicator function $W^{PW}_{L,M}(z)$:
\be\label{eq29}
W^{PW}_{L,M}(z):=\left[\sum\limits^{L}_{l=1}\frac{|\wid{\phi}^*_{z,M}\psi_{l,M}|^2}{\lambda_{l,M}}\right]^{-1},
\quad z\in\R^2,
\en
where $\wid{\phi}_{z,M}:={\bf B}_{L,M}(\phi_z(\hat{x}_1),\phi_z(\hat{x}_2),\ldots,\phi_z(\hat{x}_L))^T$
and $\{\lambda_{l,M};\psi_{l,M}\}_{l=1}^L$ is the eigensystem of the complex symmetric matrix
$({\bf\wid{N}}_{L,M})_\#:=|\Rt({\bf\wid{N}}_{L,M})|+|\I({\bf\wid{N}}_{L,M})|$.
Here, the real and imaginary parts of the matrix
${\bf\wid{N}}_{L,M}$ are complex symmetric matrices given by
\ben
\Rt({\bf\wid{N}}_{L,M}):=\frac{1}{2}\left({\bf\wid{N}}_{L,M}
+{\bf\wid{N}}^*_{L,M}\right),\quad
\I({\bf\wid{N}}_{L,M}):=\frac{1}{2i}\left(
{\bf\wid{N}}_{L,M}-{\bf\wid{N}}^*_{L,M}\right),
\enn
respectively. Moreover, the matrices
$|\Rt({\bf\wid{N}}_{L,M})|$ and $|\I({\bf\wid{N}}_{L,M})|$
are the discrete form of the operators $|\Rt(\wid{N}^{PW}_{R,M})|$ and $|\I(\wid{N}^{PW}_{R,M})|$, respectively, which
are also complex symmetric and can be computed as in
\cite[Section 4]{AG05}.
From Theorem \ref{thm3} and the discussion in Remark \ref{rem3}, it is expected that $W^{PW}_{L,M}(z)$
is much bigger for $z\in D$ than that for $z\notin D$ if $M\geq R$ and $R$ is sufficiently large.
Here, the constant $2\pi/L$ in (\ref{eq33}) is not taken into account for the indicator
function (\ref{eq29}) since it does not make any contribution to the numerical algorithm.

The numerical algorithm of the approximate factorization method can be presented as follows.

\begin{algorithm}
Let $K$ be the sampling region which contains the unknown obstacle $D$.
\begin{enumerate}[(1)]
\item
Choose $\mathcal{T}_m$ to be a mesh of $K$. Set $R$ and $M$ to be large numbers with $M\geq R$.
\item
Collect the phaseless {\color{HW}total-field} data $|u(x_i,\hat{x}_j)|$ with $x_i\in\pa B_{R}$ and
$\hat{x}_j\in\Sp^1$, $1\leq i,j\leq L$, generated by the incident plane waves
$u^i(x,\hat{x}_j)=e^{ik x\cdot\hat{x}_j}$, $1\leq j\leq L$.
\item
Compute the matrix ${\bf\wid{N}}_{L,M}$ by using (\ref{eq28}).
\item
For all sampling points $z\in\mathcal{T}_m$, compute the indicator function $W^{PW}_{L,M}(z)$ given in (\ref{eq29}).
\item
Locate all those sampling points $z\in\mathcal{T}_m$ such that $W^{PW}_{L,M}(z)$ takes a large value,
which represent the obstacle $D$.
\end{enumerate}
\end{algorithm}

\section{Numerical examples}\label{sec5}
\setcounter{equation}{0}

In this section, we present several numerical experiments to demonstrate the effectiveness of
our inversion algorithm. To generate the synthetic data, the forward scattering problem is solved
by using the Nystr\"{o}m method \cite{CK13}.
Further, the noisy phaseless {\color{HW}total-field} data $|u_\delta(x,d)|$, $x\in\pa B_{R}$, $d\in\Sp^1$,
are simulated by
\ben
|u_\delta(x,d)|=|u(x,d)|(1+\delta\zeta),
\enn
where $\delta$ is the noise ratio and $\zeta$ is the uniformly distributed random number in $[-1,1]$.

In the following examples, we choose $M=100$. The parametrization of the
test curves for the boundary $\pa D$ are given in Table \ref{table1}, {\color{HW}where $(c_1,c_2)$
denotes the center of the test curves.}

\begin{table}[h]
\centering
\begin{tabular}{ll}
\hline
Type & Parametrization\\
\hline
{\color{HW}Circle} &{\color{HW}$x(t)=(c_1,c_2)+(\cos{t},\sin{t}),\;t\in[0,2\pi]$}\\
Kite shaped &$x(t)={\color{HW}(c_1,c_2)} + (\cos{t}+0.65\cos(2t)-0.65,1.5\sin{t}),\;t\in[0,2\pi]$\\
Peanut shaped &$x(t)={\color{HW}(c_1,c_2)}+\sqrt{\cos^2{t}+0.25\sin^2{t}}(\cos{t},\sin{t})$, $t\in[0,2\pi]$\\
Rounded square &$x(t)={\color{HW}(c_1,c_2)}+({3}/{4})(\cos^3{t}+\cos{t},\sin^3{t}+\sin{t}),\;t\in[0,2\pi]$\\
Rounded triangle &$x(t)={\color{HW}(c_1,c_2)}+(2+0.3\cos(3t))(\cos{t},\sin{t}),\;t\in[0,2\pi]$\\
\hline
\end{tabular}
\caption{Parametrization of the boundary curves}\label{table1}
\end{table}

\textbf{Example 1.}
We first consider a peanut-shaped, sound-soft obstacle.
See Figure \ref{fig1}(a) for the physical configuration.
{\color{HW}We choose $k=40$, $L=200$ and $R=10$.}
Figure \ref{fig1} presents the reconstruction results of the obstacle
by using the phaseless {\color{HW}total-field} data from incident plane waves without noise,
with 10\% noise and with 20\% noise, respectively.

\begin{figure}[htbp]
\begin{minipage}[t]{0.4\linewidth}
\centering
\includegraphics[width=2.8in]{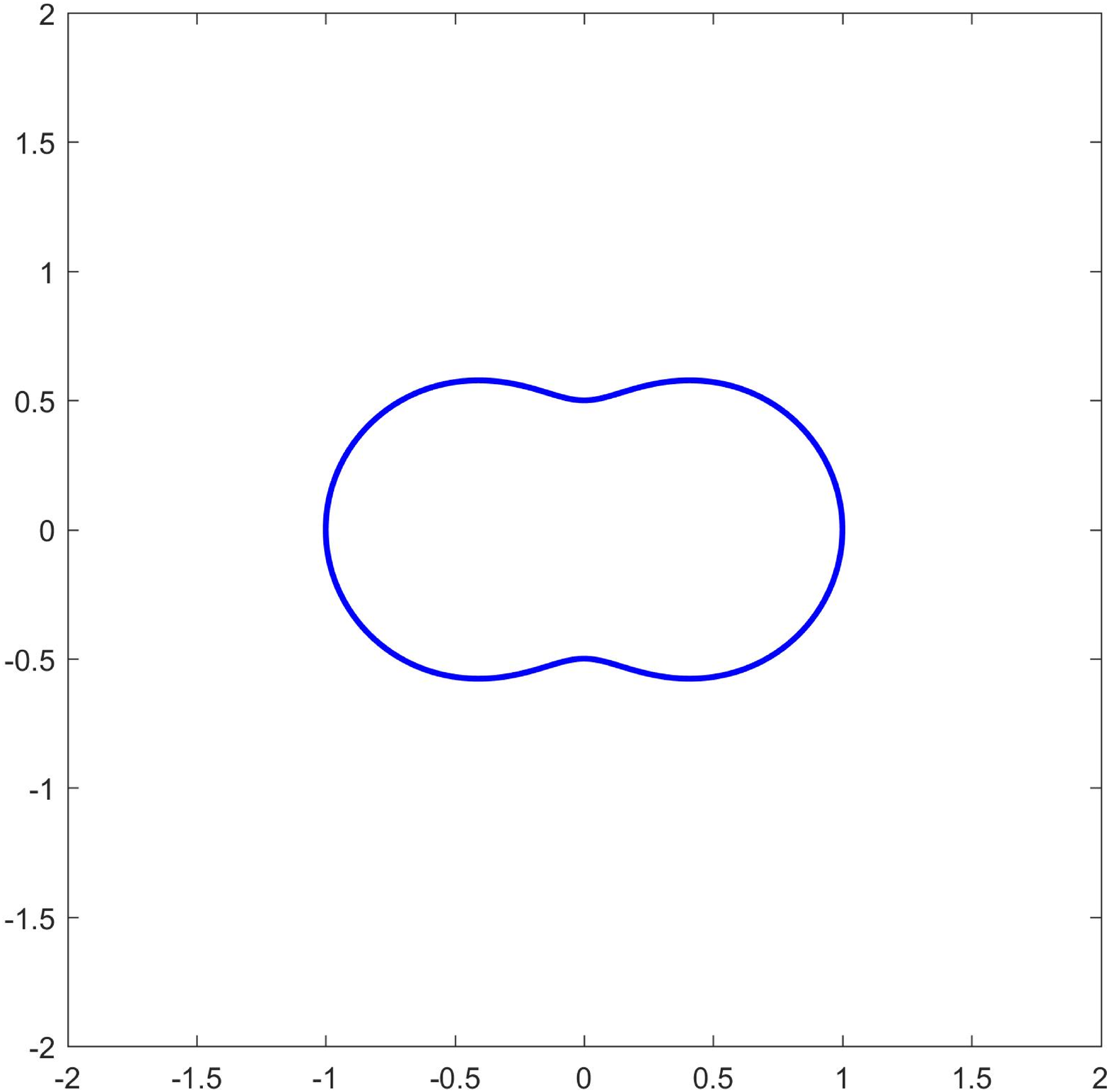}
(a) Physical configuration
\end{minipage}\qquad\qquad
\begin{minipage}[t]{0.4\linewidth}
\centering
\includegraphics[width=2.8in]{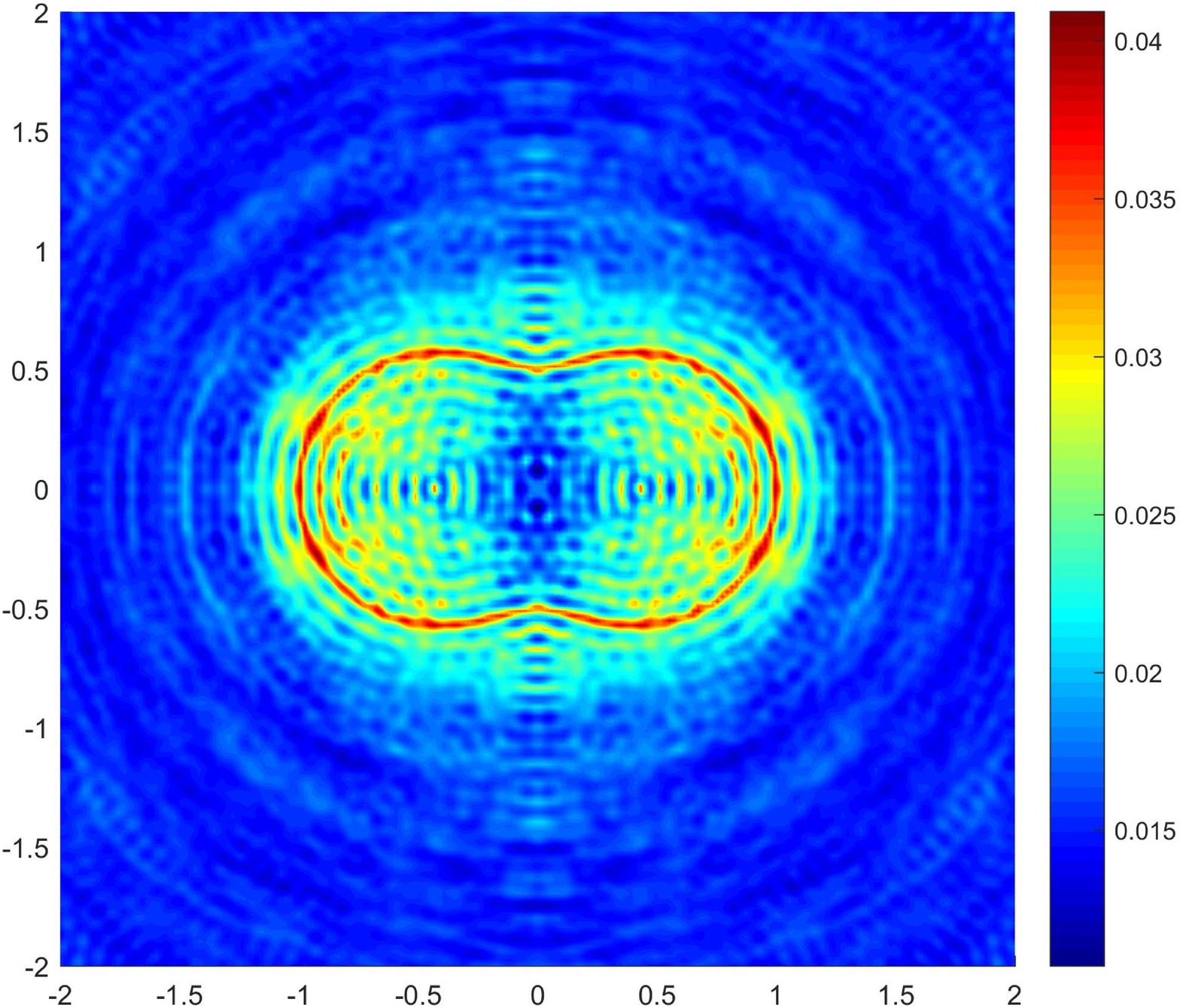}
(b) {\color{HW}$k=40$, $R=10$, no noise}
\end{minipage}
\begin{minipage}[t]{0.4\linewidth}
\centering
\includegraphics[width=2.8in]{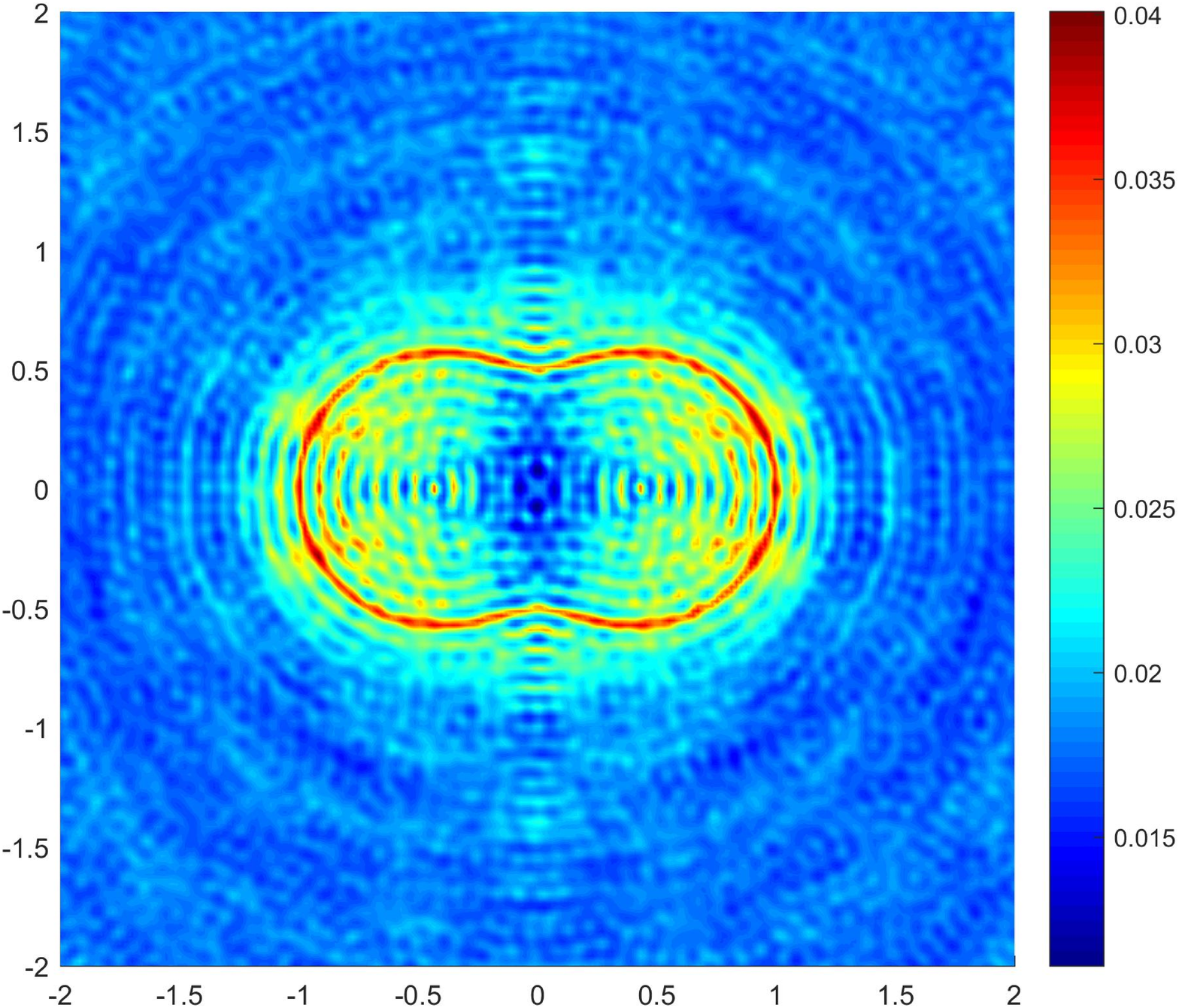}
(c) {\color{HW}$k=40$, $R=10$, 10\% noise}
\end{minipage}\qquad\qquad
\begin{minipage}[t]{0.4\linewidth}
\centering
\includegraphics[width=2.8in]{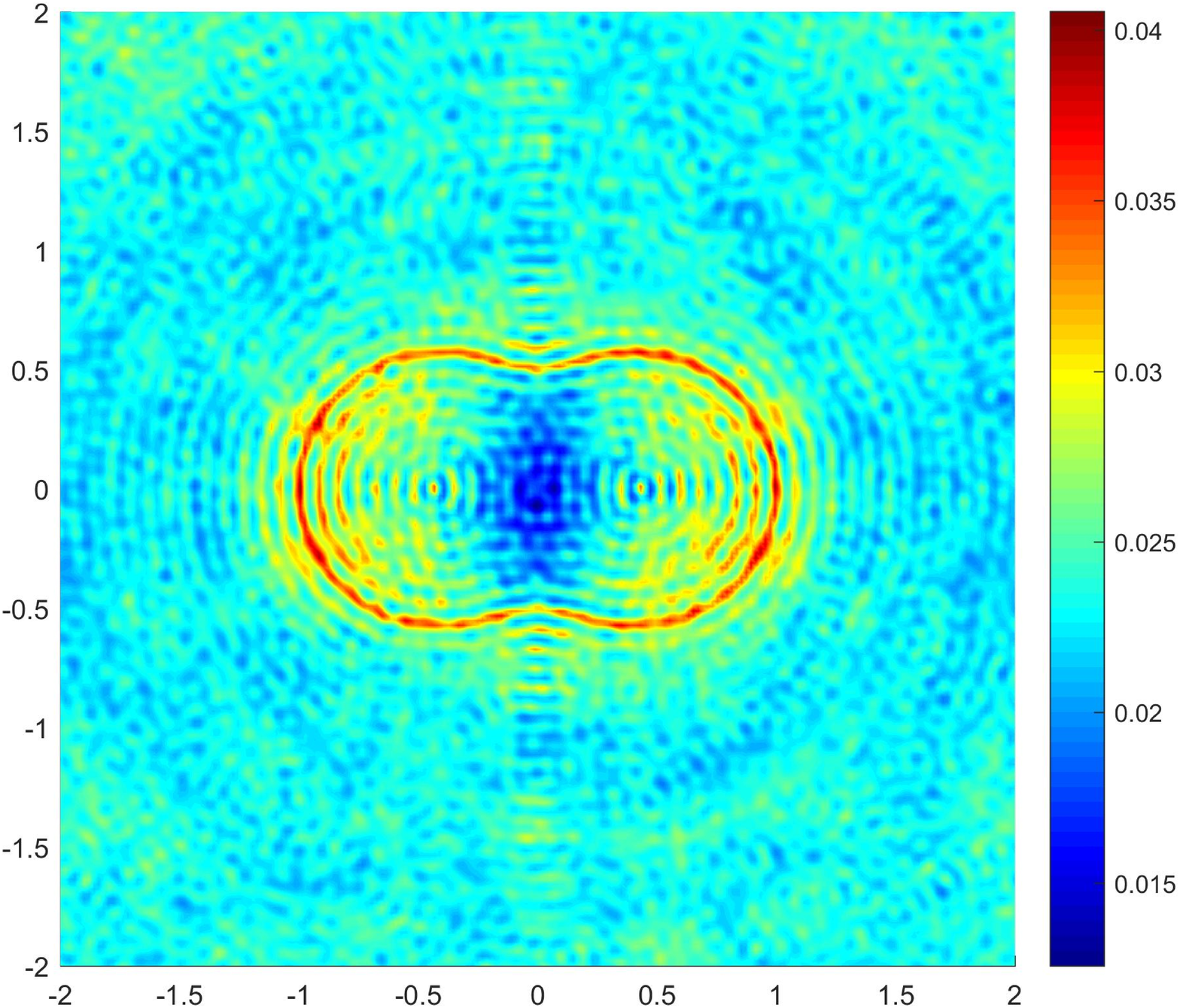}
(d) {\color{HW}$k=40$, $R=10$, 20\% noise}
\end{minipage}
\caption{Reconstruction of a peanut-shaped, sound-soft obstacle.
}\label{fig1}
\end{figure}

\textbf{Example 2.}
We now consider a rounded square-shaped, sound-hard obstacle.
See Figure \ref{fig2}(a) for the physical configuration.
{\color{HW}We choose $k=10$, $L=150$ and $R=10$.}
Figure \ref{fig2} presents the reconstruction results of the obstacle
by using the phaseless {\color{HW}total-field} data from incident plane waves without noise,
with 10\% noise and with 20\% noise, respectively.

\begin{figure}[htbp]
\begin{minipage}[t]{0.4\linewidth}
\centering
\includegraphics[width=2.8in]{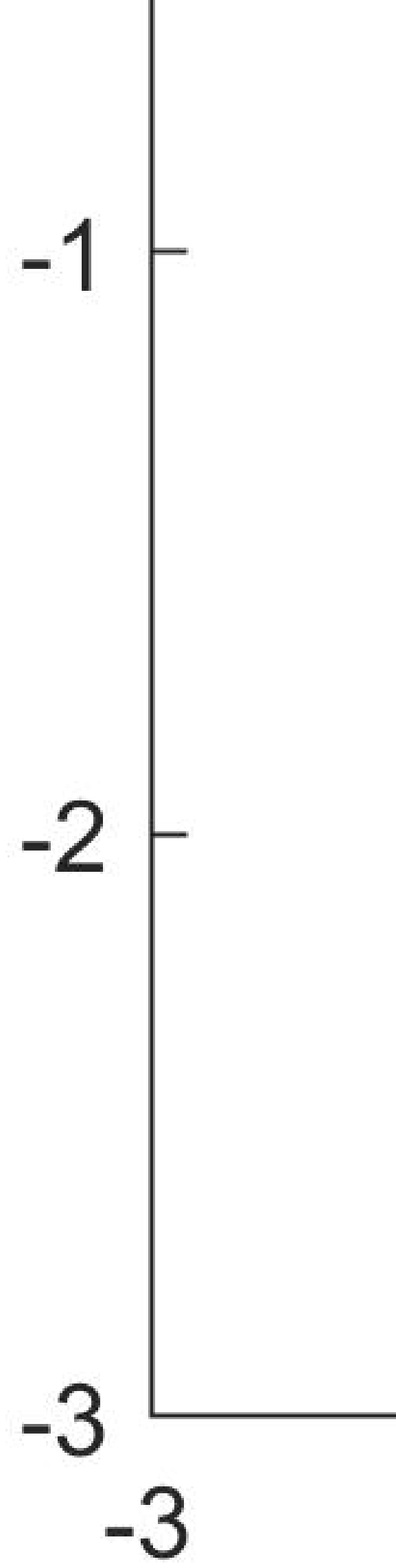}
(a) Physical configuration
\end{minipage}\qquad\qquad
\begin{minipage}[t]{0.4\linewidth}
\centering
\includegraphics[width=2.8in]{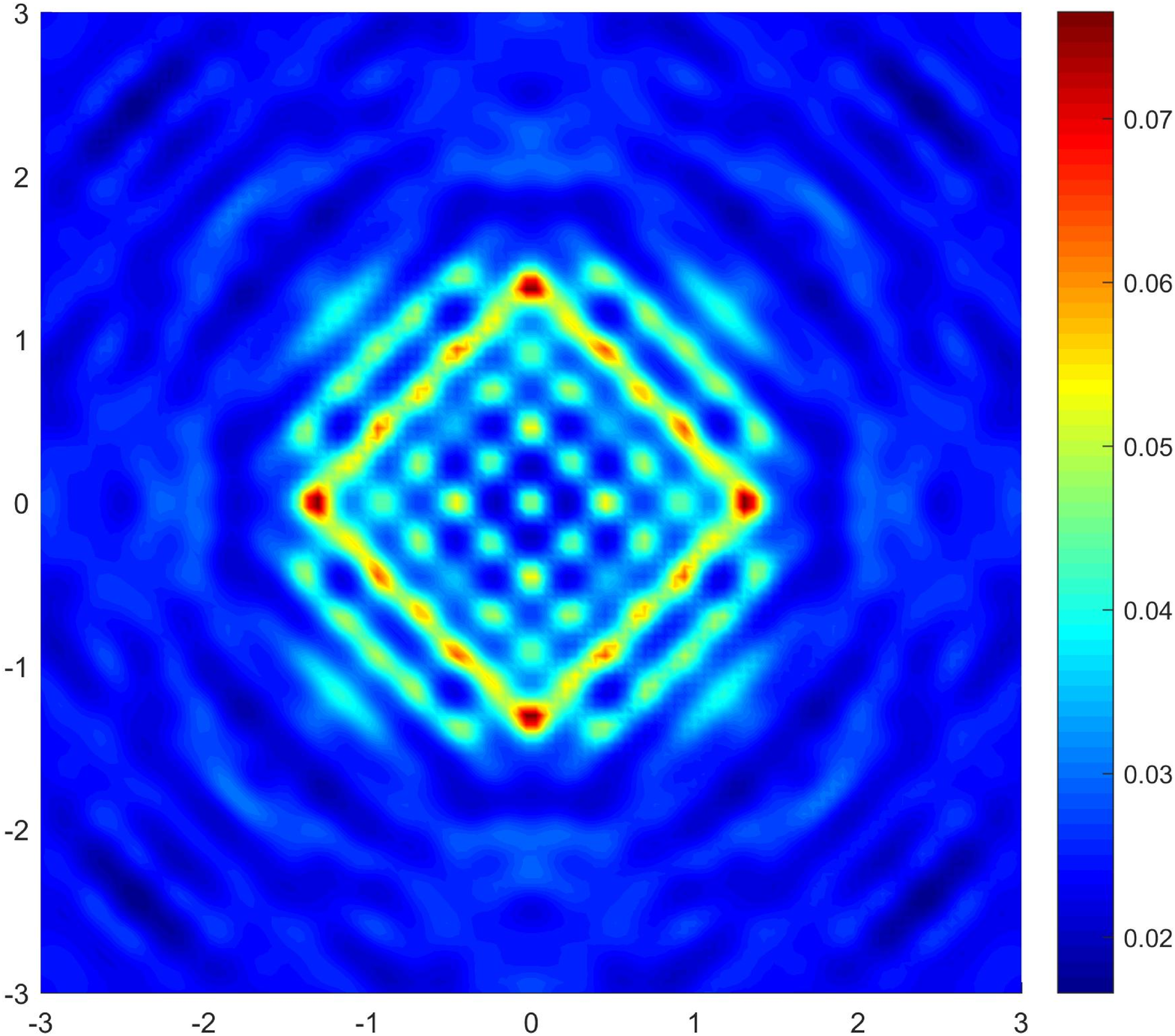}
(b) {\color{HW}$k=10$, $R=10$, no noise}
\end{minipage}
\begin{minipage}[t]{0.4\linewidth}
\centering
\includegraphics[width=2.8in]{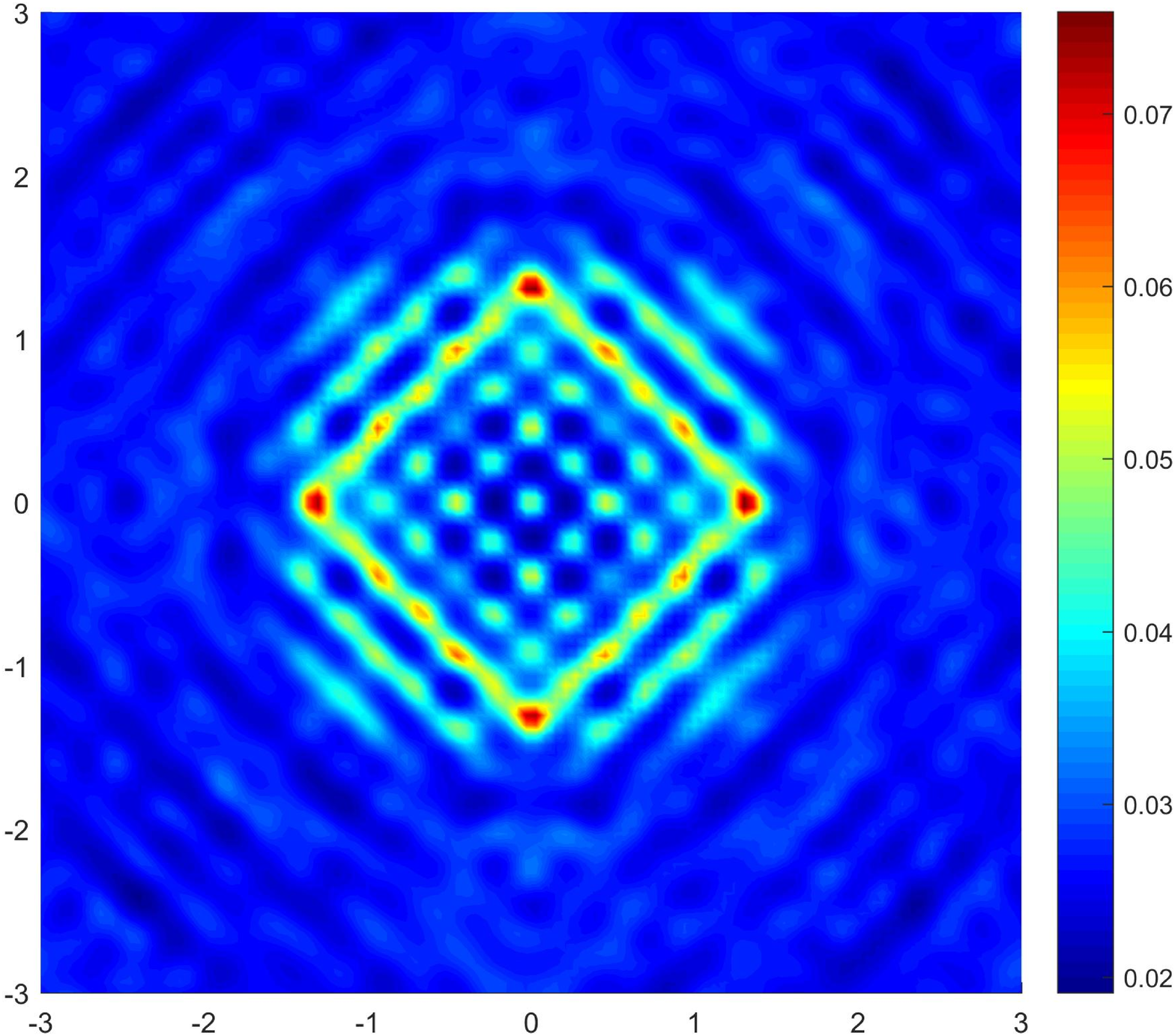}
(c) {\color{HW}$k=10$, $R=10$, 10\% noise}
\end{minipage}\qquad\qquad
\begin{minipage}[t]{0.4\linewidth}
\centering
\includegraphics[width=2.8in]{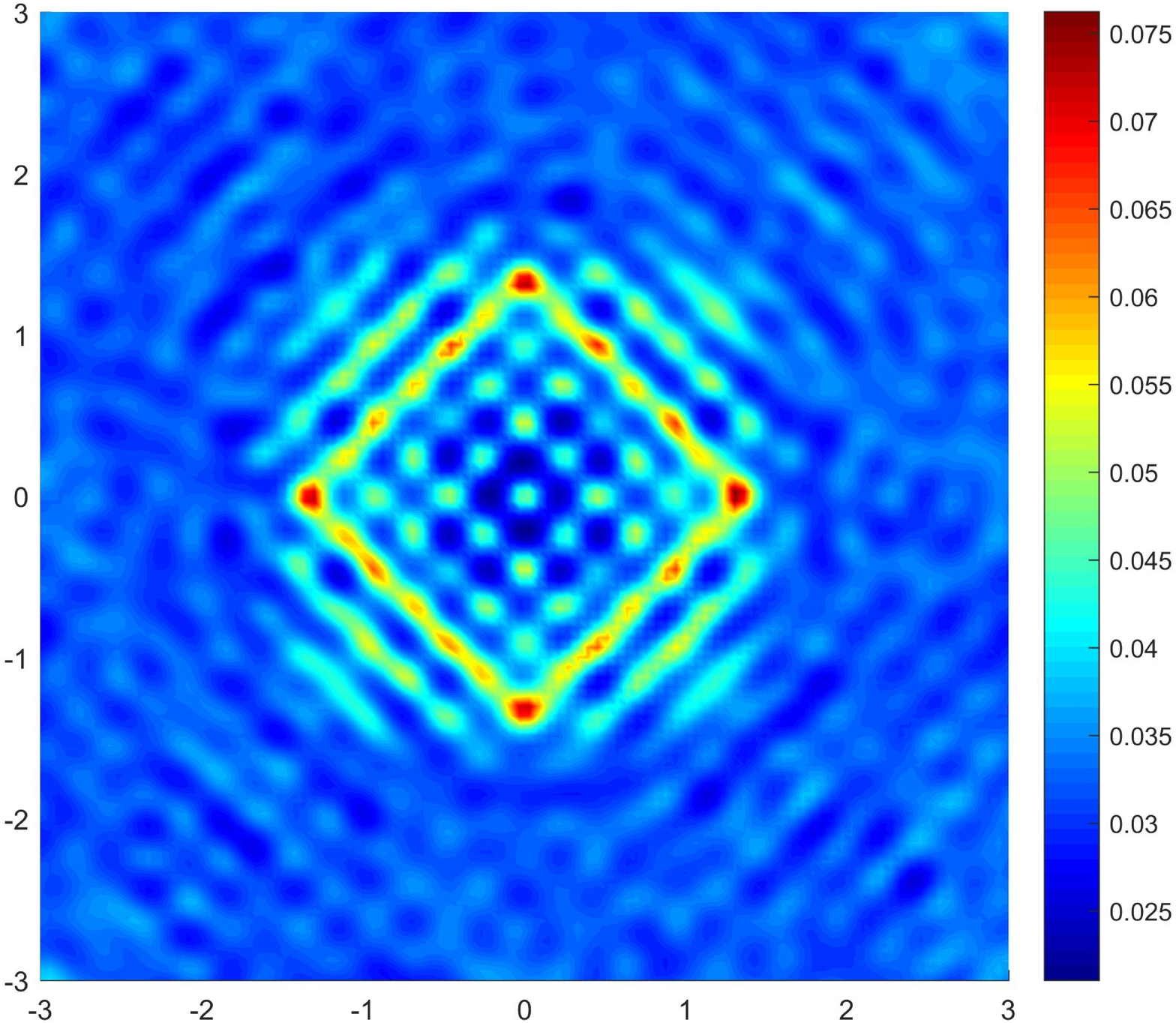}
(d) {\color{HW}$k=10$, $R=10$, 20\% noise}
\end{minipage}
\caption{Reconstruction of a rounded square-shaped, sound-hard obstacle.
}\label{fig2}
\end{figure}

\textbf{Example 3.}
This example considers a kite-shaped, impedance obstacle.
The impedance function is given by $\rho(x(t))=(5+5i)*(1+0.5\sin t),t\in[0,2\pi]$,
where $x(t)$ is the parametrization of the boundary $\pa D$.
See Figure \ref{fig3}(a) for the physical configuration.
{\color{HW}
In this example, we investigate the effect of the radius $R$ of the measurement circle $\partial B_R$ on the
imaging results. We choose $k=10$ and $L=150$.
Further, the noise ratio is set to be $\delta=10\%$.}
Figure \ref{fig3} presents the reconstruction results of the obstacle
by using the phaseless {\color{HW}total-field} data from incident plane waves
{\color{HW}with the radius of the measurement circle $\partial B_R$ to be
$R=4$, $8$, $12$, respectively}. {\color{HW}From Figure \ref{fig3} it can be seen that
the reconstruction result is getting better with $R$ getting larger.
This is consistent with the discussions in Remark \ref{rem2}.}

\begin{figure}[htbp]
\begin{minipage}[t]{0.4\linewidth}
\centering
\includegraphics[width=2.8in]{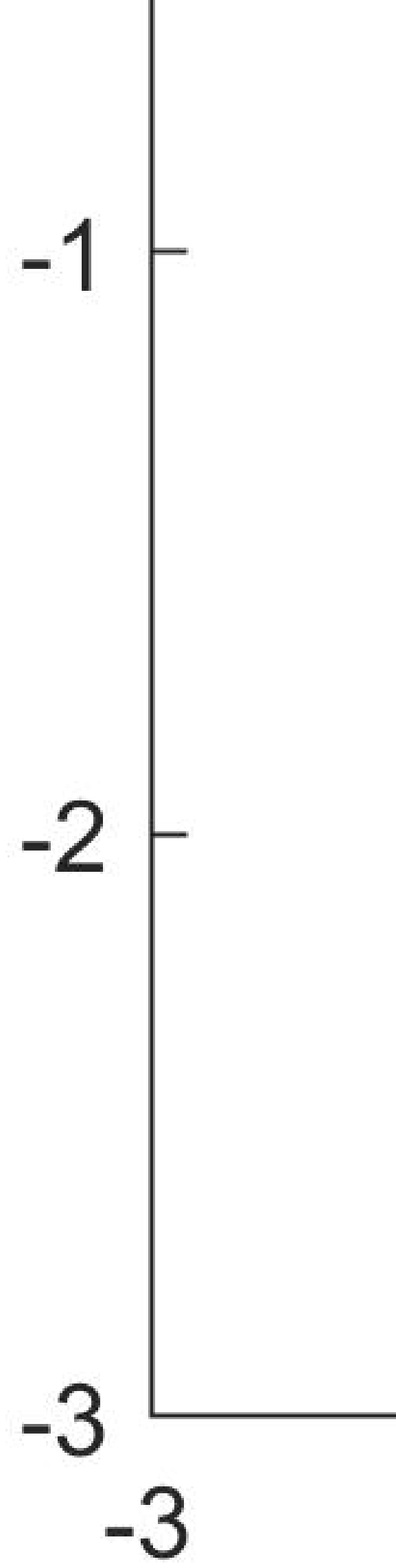}
(a) Physical configuration
\end{minipage}\qquad\qquad
\begin{minipage}[t]{0.4\linewidth}
\centering
\includegraphics[width=2.8in]{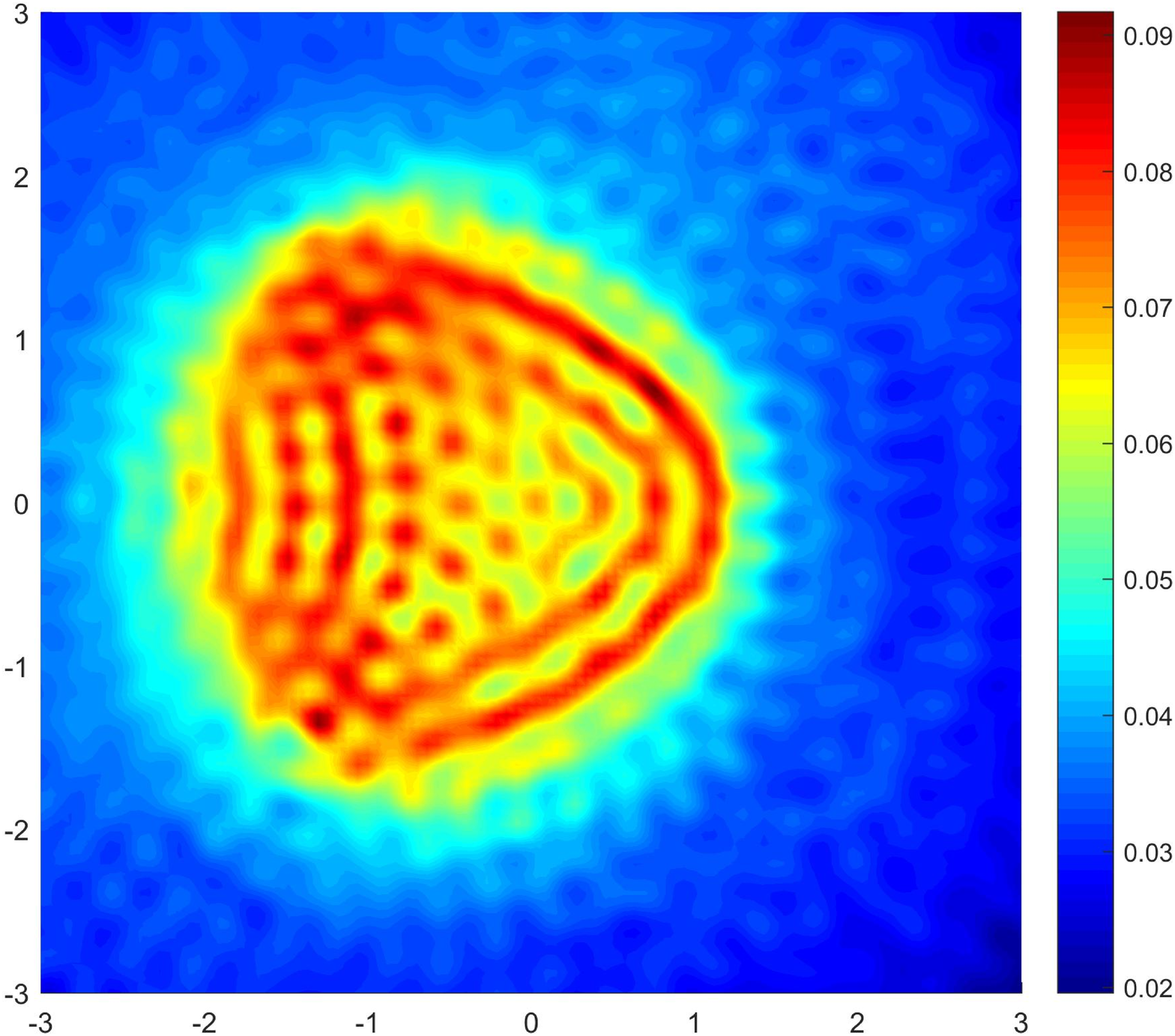}
(b) {\color{HW}$k=10$, $R=4$, 10\% noise}
\end{minipage}
\begin{minipage}[t]{0.4\linewidth}
\centering
\includegraphics[width=2.8in]{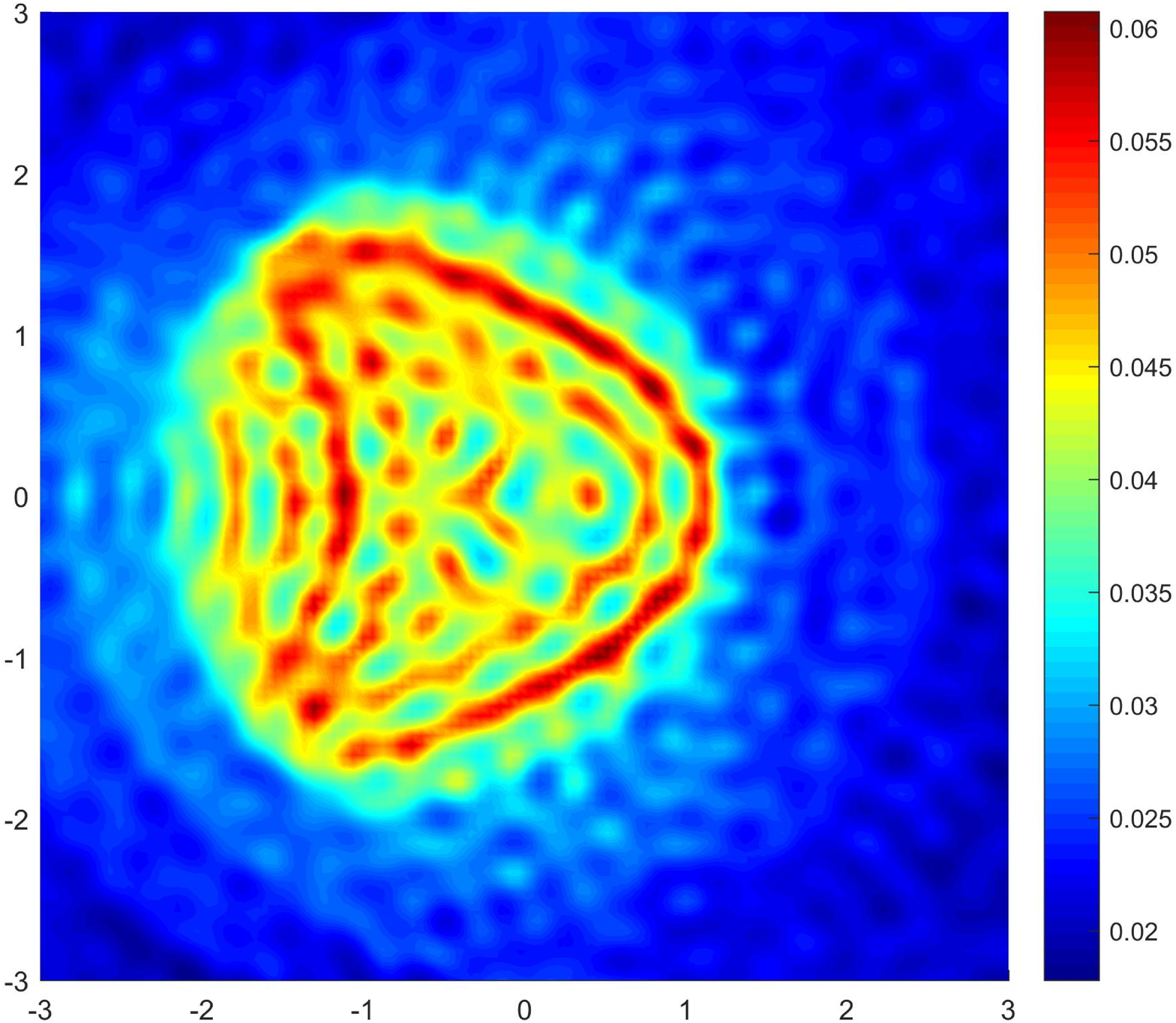}
(c) {\color{HW}$k=10$, $R=8$, 10\% noise}
\end{minipage}\qquad\qquad
\begin{minipage}[t]{0.4\linewidth}
\centering
\includegraphics[width=2.8in]{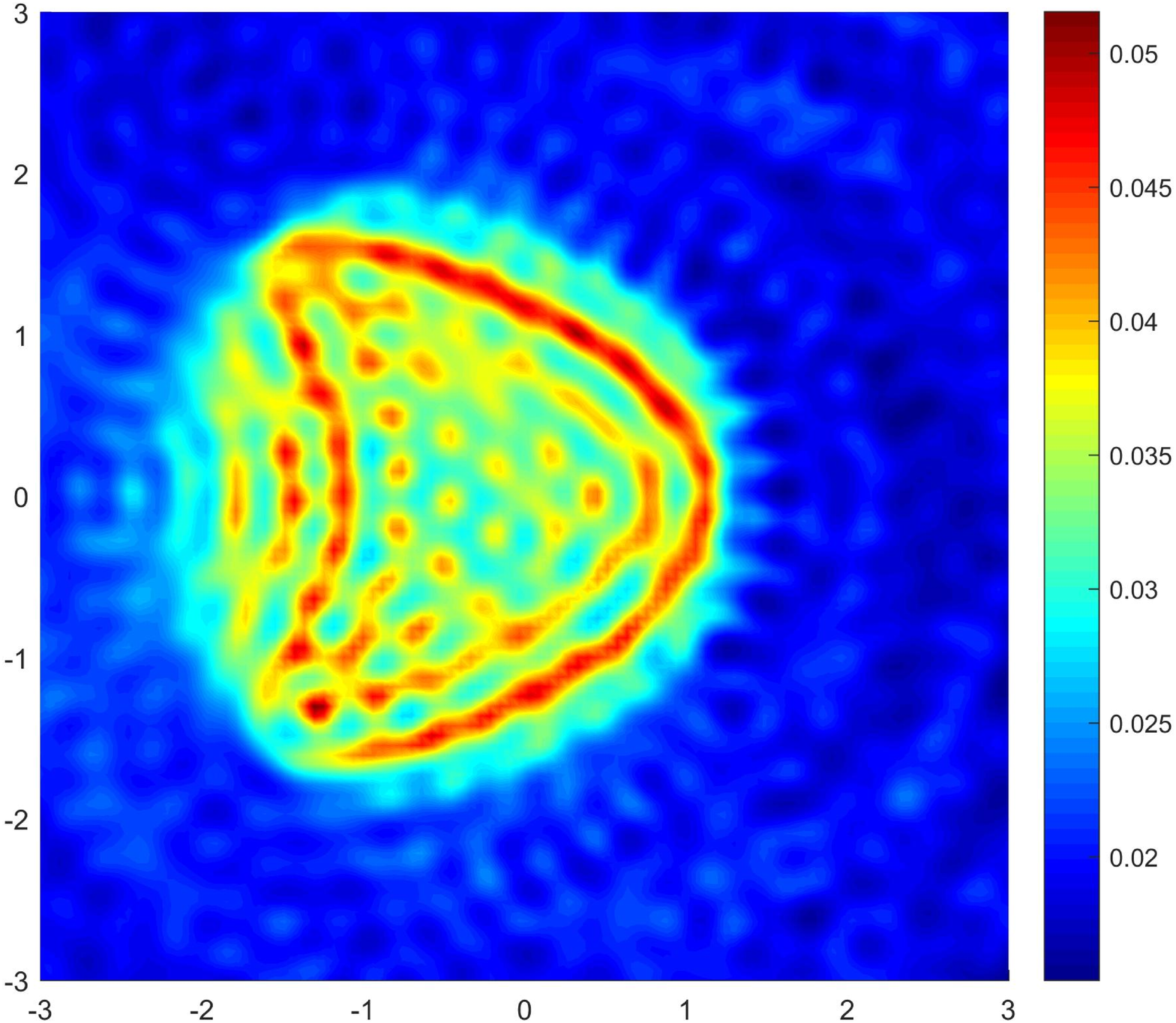}
(d) {\color{HW}$k=10$, $R=12$, 10\% noise}
\end{minipage}
\caption{Reconstruction of a kite-shaped, impedance obstacle with the impedance function
$\rho(x(t))=(5+5i)*(1+0.5\sin t),t\in[0,2\pi]$, on $\pa D$.
}\label{fig3}
\end{figure}

\textbf{Example 4.}
This example considers a rounded triangle-shaped, penetrable obstacle.
The refractive index in $D$ is given by $n(x)=2+1.5i$.
{\color{HW}
In this example, we investigate the reconstruction results of the same obstacle with three different locations.
For all three cases, we choose $k=10$, $L=150$ and $R=10$, and the noise ratio is set to be $\delta=10\%$.
In Cases 1, 2 and 3, the obstacle $D$ is centered at $(4,2)$ (Figure \ref{fig4}(a)),
at $(-3,4)$ (Figure \ref{fig4}(c)) and at $(-2,-4)$ (Figure \ref{fig4}(e)), respectively,
with the corresponding reconstruction result of the obstacle in Figure \ref{fig4}(b), (d) and (f).
}

\begin{figure}[htbp]
\begin{minipage}[t]{0.4\linewidth}
\centering
\includegraphics[width=2.8in]{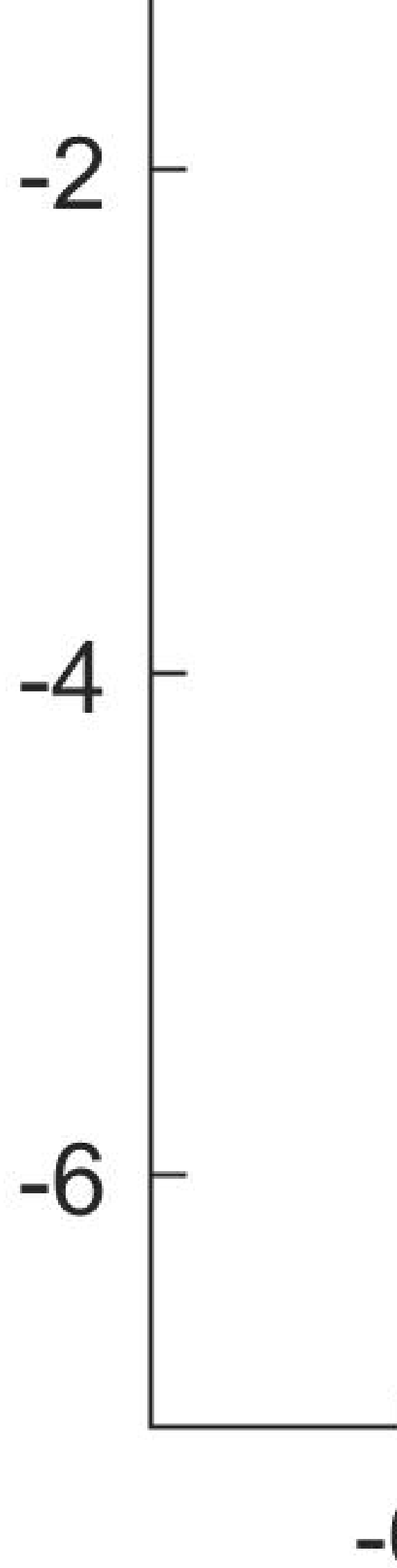}
(a) {\color{HW}Case 1: Physical configuration}
\end{minipage}\qquad\qquad
\begin{minipage}[t]{0.4\linewidth}
\centering
\includegraphics[width=2.8in]{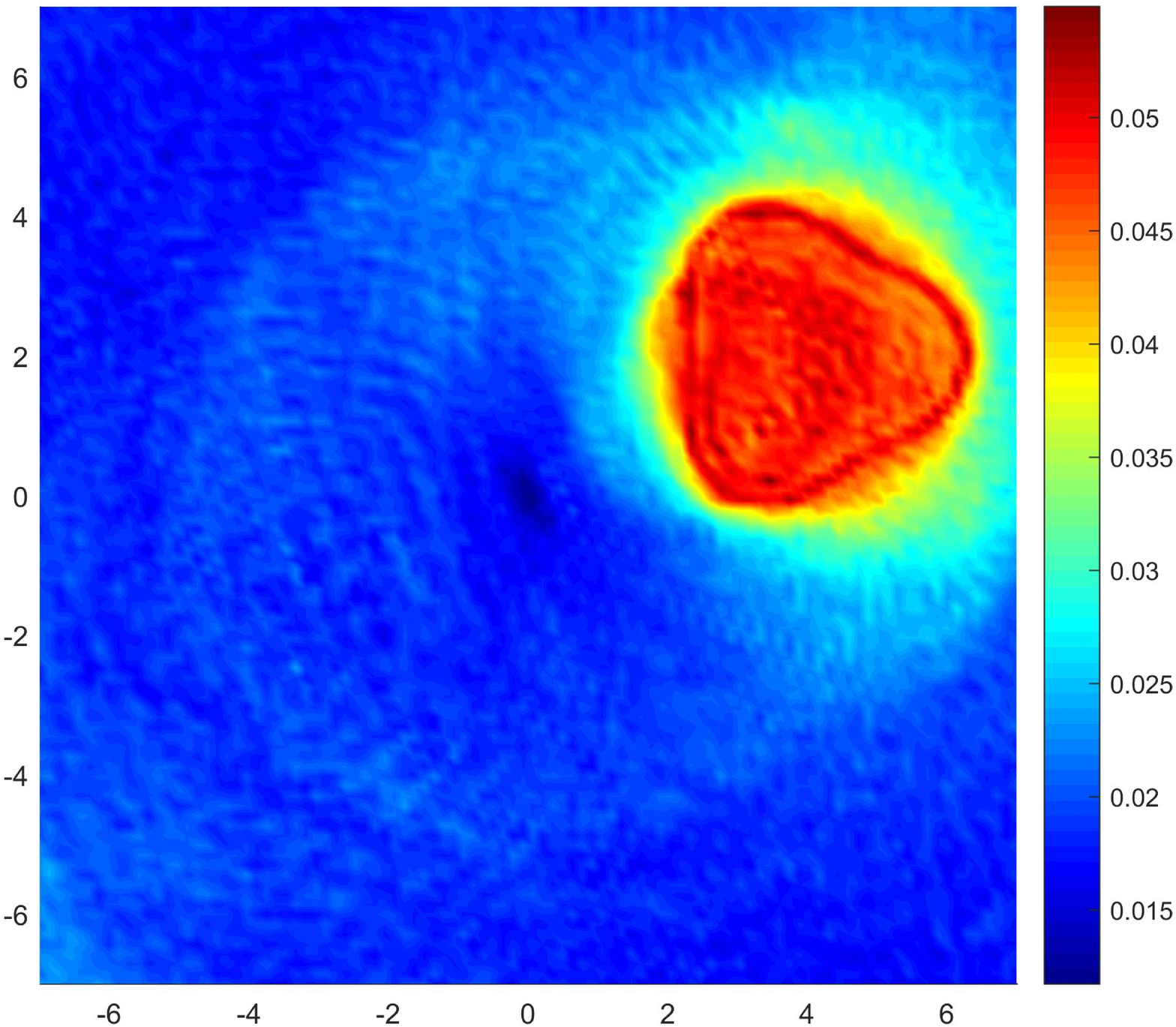}
(b) {\color{HW}Case 1: $k=10$, $R=10$, 10\% noise}
\end{minipage}
\begin{minipage}[t]{0.4\linewidth}
\centering
\includegraphics[width=2.8in]{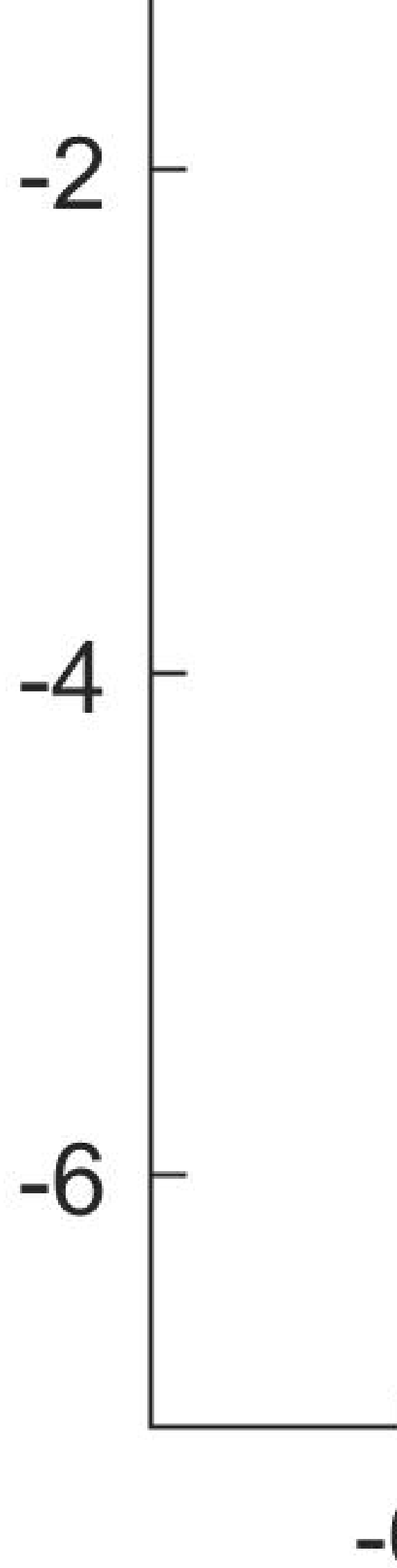}
(c) {\color{HW}Case 2: Physical configuration}
\end{minipage}\qquad\qquad
\begin{minipage}[t]{0.4\linewidth}
\centering
\includegraphics[width=2.8in]{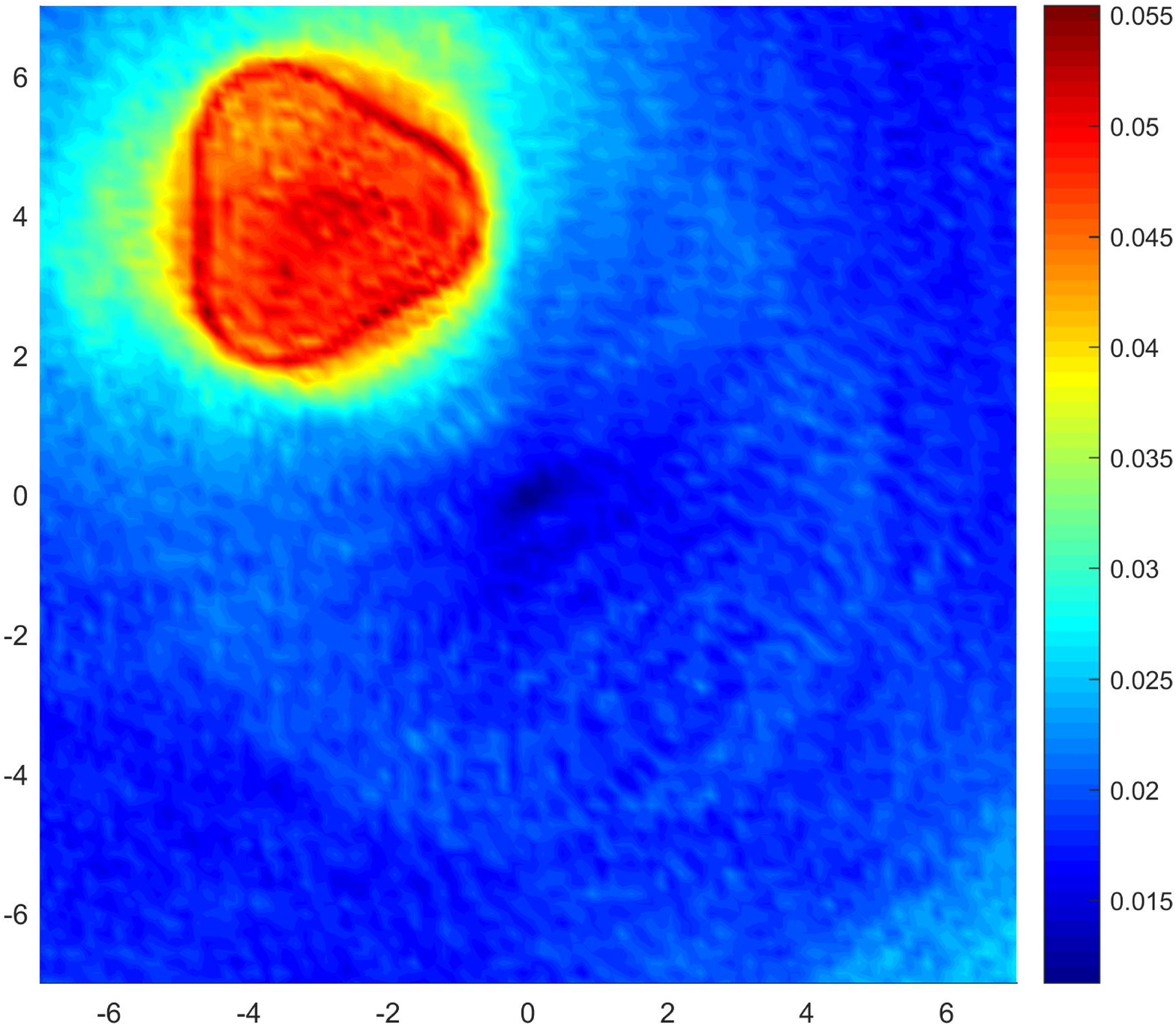}
(d) {\color{HW}Case 2: $k=10$, $R=10$, 10\% noise}
\end{minipage}
\begin{minipage}[t]{0.4\linewidth}
\centering
\includegraphics[width=2.8in]{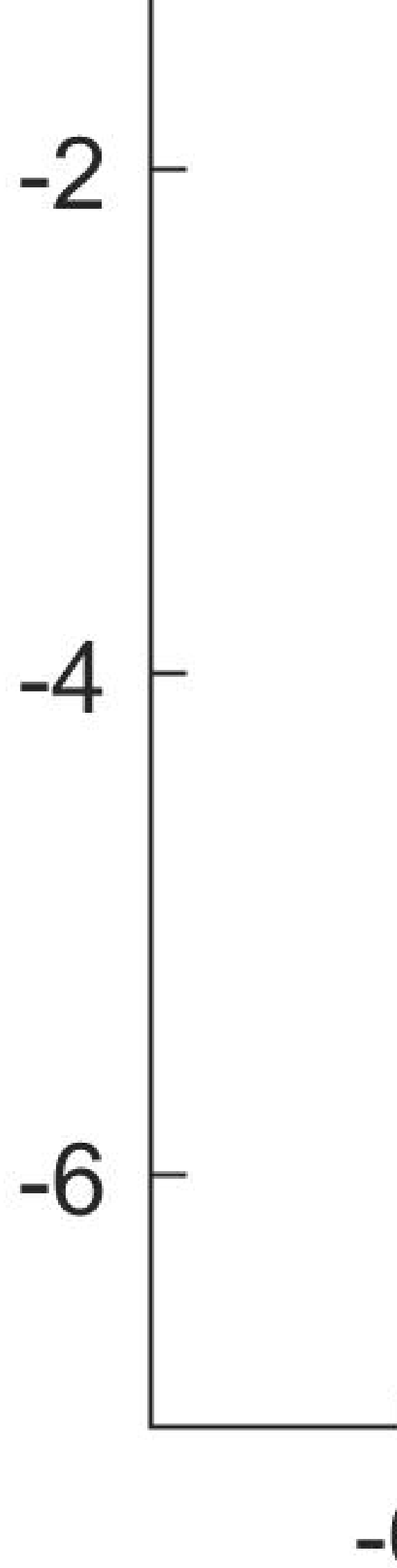}
(e) {\color{HW}Case 3: Physical configuration}
\end{minipage}\qquad\qquad
\begin{minipage}[t]{0.4\linewidth}
\centering
\includegraphics[width=2.8in]{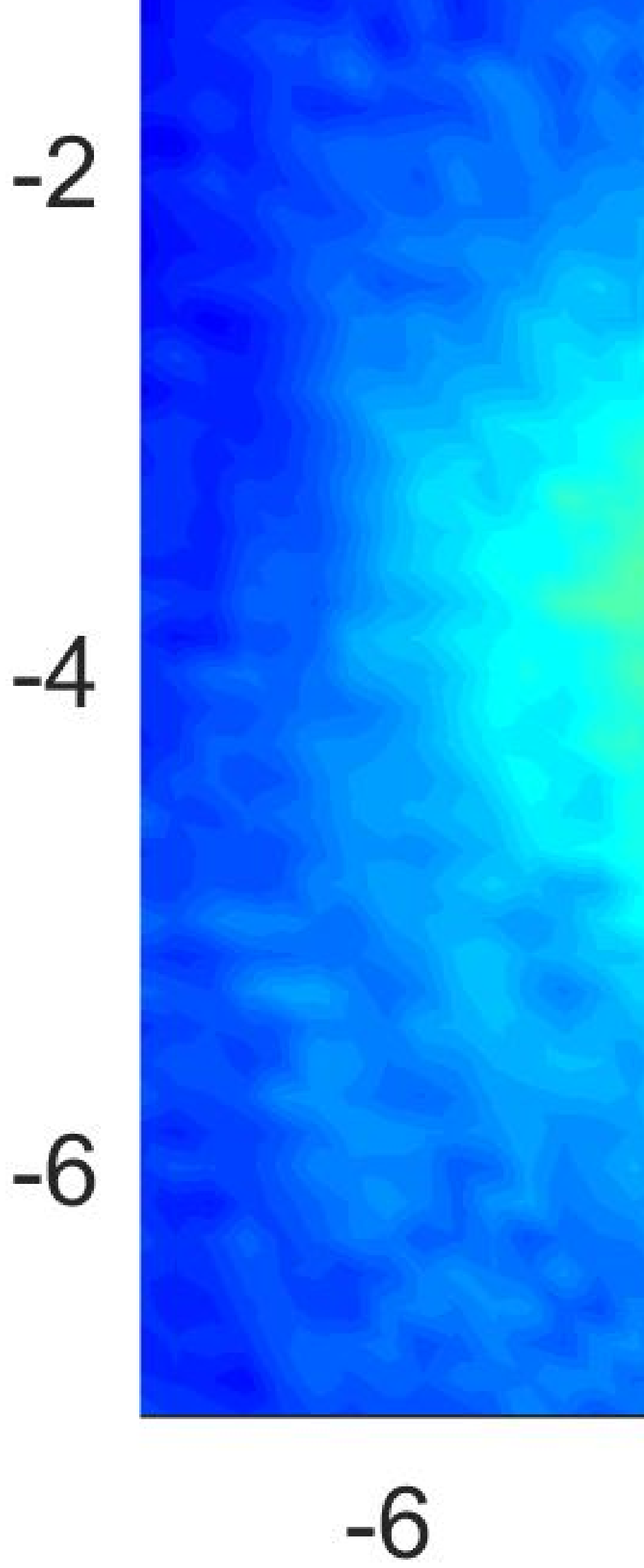}
(f) {\color{HW}Case 3: $k=10$, $R=10$, 10\% noise}
\end{minipage}
\caption{Reconstruction of a rounded triangle-shaped, penetrable obstacle with the refractive index
$n(x)=2+1.5i$ in $D$ at three different locations.
}\label{fig4}
\end{figure}

{\color{HW}
\textbf{Example 5.}
We consider a sound-soft
obstacle $D=D_1\cup D_2$ with two disjoint components $D_1$ and $D_2$, where $D_1$ is
rounded square-shaped and $D_2$ is circle-shaped.
See Figure \ref{fig5}(a) for the physical configuration.
We choose $k=10$, $L=200$ and $R=15$.
Figure \ref{fig5} presents the reconstruction results of the obstacle
by using the phaseless total-field data from incident plane waves without noise,
with 10\% noise and with 20\% noise, respectively.
It is shown in Figure \ref{fig5} that both the location and the shape of the two components $D_1$ and $D_2$
can be numerically recovered with our inversion algorithm.
}

\begin{figure}[htbp]
\begin{minipage}[t]{0.4\linewidth}
\centering
\includegraphics[width=2.8in]{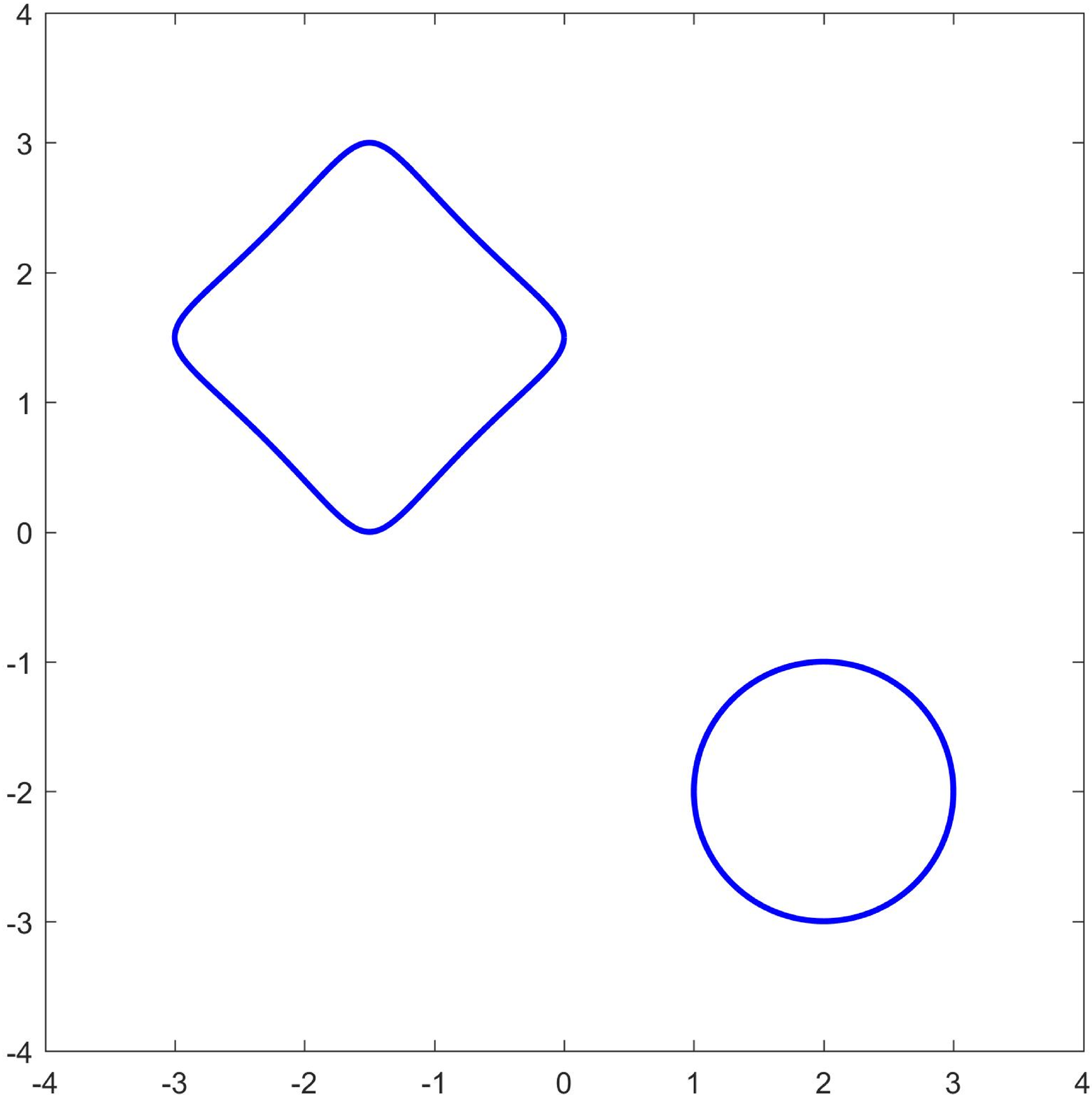}
(a) {\color{HW}Physical configuration}
\end{minipage}\qquad\qquad
\begin{minipage}[t]{0.4\linewidth}
\centering
\includegraphics[width=2.8in]{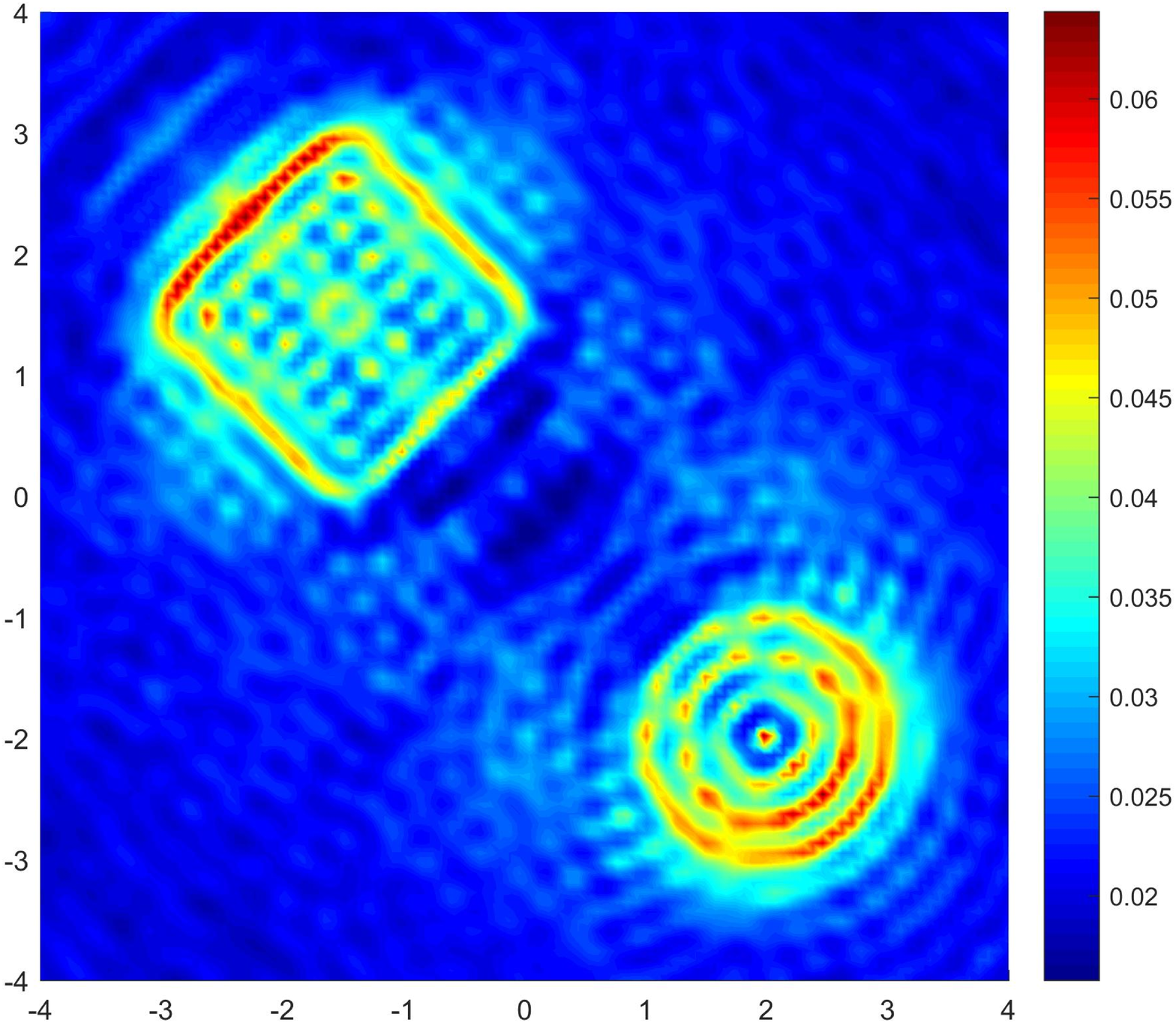}
(b) {\color{HW}$k=10$, $R=15$, no noise}
\end{minipage}
\begin{minipage}[t]{0.4\linewidth}
\centering
\includegraphics[width=2.8in]{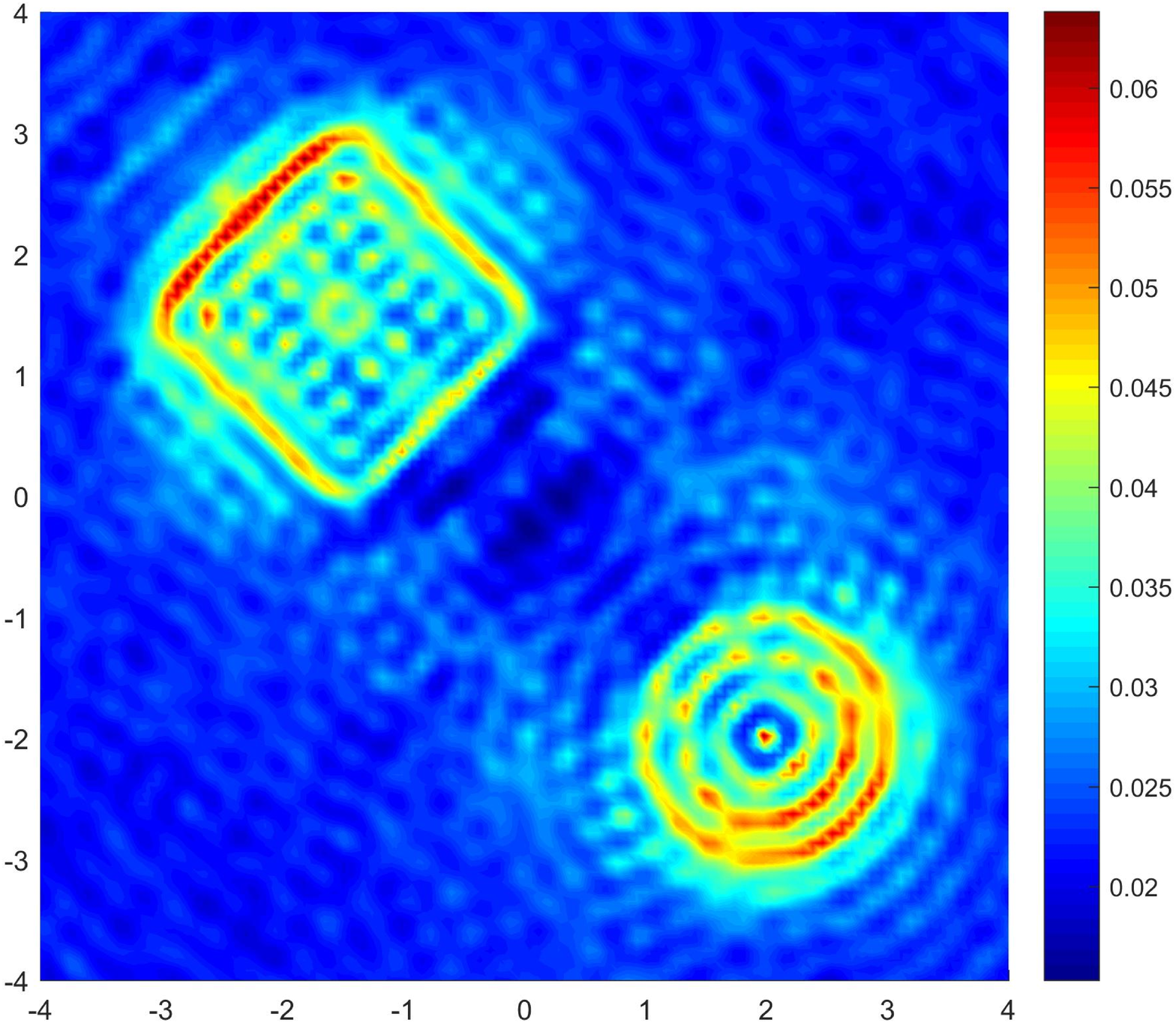}
(c) {\color{HW}$k=10$, $R=15$, 10\% noise}
\end{minipage}\qquad\qquad
\begin{minipage}[t]{0.4\linewidth}
\centering
\includegraphics[width=2.8in]{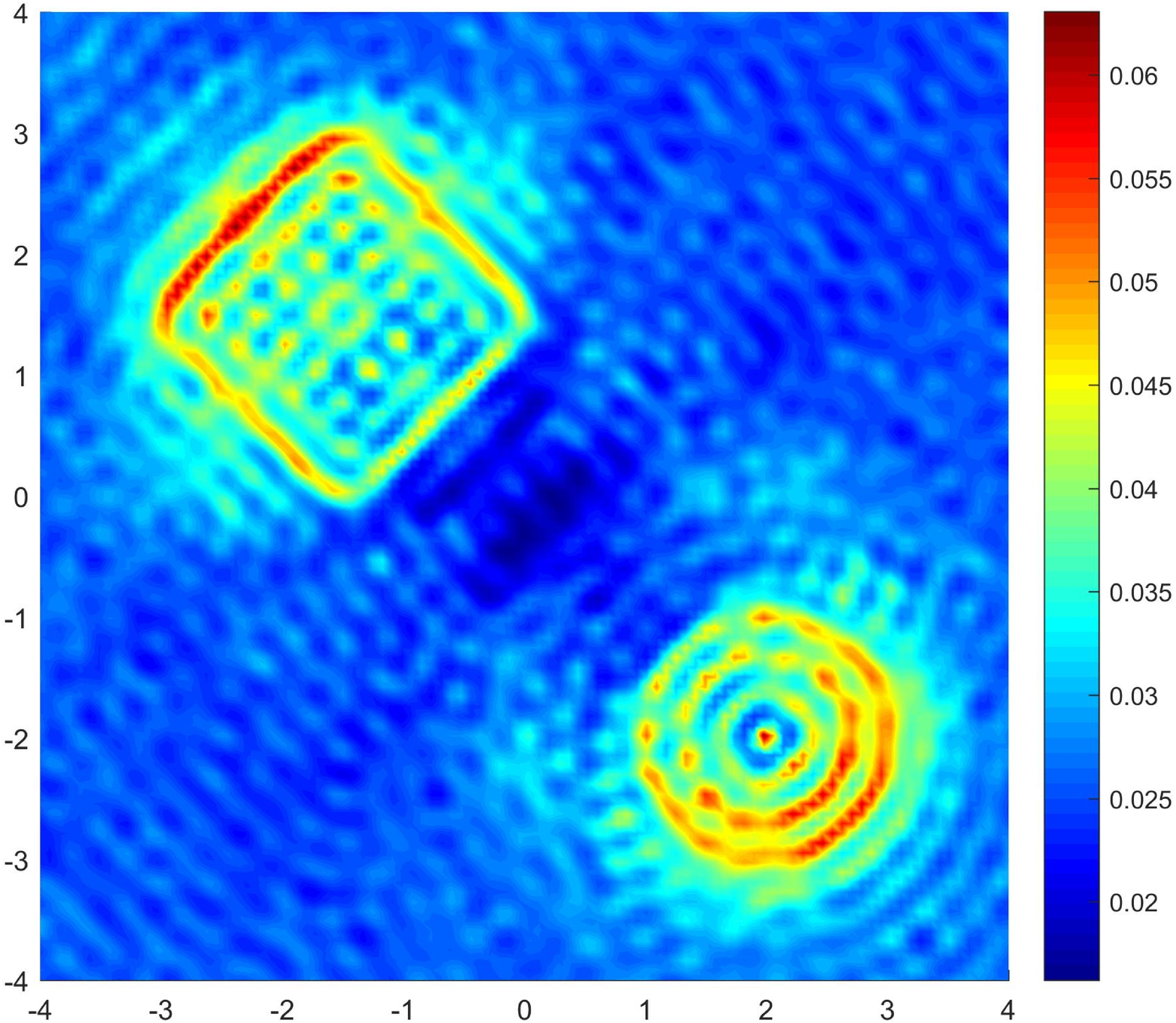}
(d) {\color{HW}$k=10$, $R=15$, 20\% noise}
\end{minipage}
\caption{\color{HW}Reconstruction of a sound-soft obstacle with two disjoint components $D_1$ and $D_2$, where $D_1$ is
rounded square-shaped and $D_2$ is circle-shaped.
}\label{fig5}
\end{figure}

\section{Conclusion}\label{sec6}

In this paper, we considered the inverse scattering problem with phaseless {\color{HW}total-field} data at
a fixed frequency, associated with incident plane waves. An approximate factorization method is proposed to
reconstruct the unknown obstacles from phaseless {\color{HW}total-field} data measured on a circle of a
sufficiently large radius. The theoretical analysis of our approach is based on the asymptotic property in
the linear space $\mathcal{L}(H^{1/2}(\Sp^1), H^{-1/2}(\Sp^1))$
of the phaseless {\color{HW}total-field} operator defined in terms of the phaseless {\color{HW}total-field}
data measured on the circle of a large enough radius,
together with the factorization of the modified far-field operator.
Our inversion algorithm is independent of the physical properties of the unknown obstacles.
Numerical experiments indeed show that our inversion algorithm provides
satisfactory reconstruction results of the unknown obstacles.
Currently, we are extending this method to the case of incident point sources.
Moreover, it is interesting to study the more challenging case of inverse electromagnetic scattering problems.
This will be considered as a future work.

\section*{Acknowledgements}

This work is partly supported by the NNSF of China grants 11871466 and 91630309.
We thank the reviewers for their invaluable comments and suggestions which helped improve the presentation
of the paper.


\end{document}